\numberwithin{equation}{section}
\newtheorem{Theorem}{Theorem}[section]
\newtheorem{Corollary}[Theorem]{Corollary}
\newtheorem{Lemma}[Theorem]{Lemma}
\newtheorem{Proposition}[Theorem]{Proposition}
 { \theoremstyle{definition}
\newtheorem{Definition}[Theorem]{Definition}
\newtheorem{Example}[Theorem]{Example} }
\begin{document}

\allowdisplaybreaks

\renewcommand{\thefootnote}{$\star$}

\newcommand{\arXivNumber}{1511.06721}

\renewcommand{\PaperNumber}{033}

\FirstPageHeading

\ShortArticleName{Orthogonality Measure on the Torus for Vector-Valued Jack Polynomials}

\ArticleName{Orthogonality Measure on the Torus\\ for Vector-Valued Jack Polynomials\footnote{This paper is a~contribution to the Special Issue
on Orthogonal Polynomials, Special Functions and Applications.
The full collection is available at \href{http://www.emis.de/journals/SIGMA/OPSFA2015.html}{http://www.emis.de/journals/SIGMA/OPSFA2015.html}}}

\Author{Charles F.~DUNKL}

\AuthorNameForHeading{C.F.~Dunkl}

\Address{Department of Mathematics, University of Virginia,\\ PO Box 400137, Charlottesville VA 22904-4137, USA}
\Email{\href{mailto:cfd5z@virginia.edu}{cfd5z@virginia.edu}}
\URLaddress{\url{http://people.virginia.edu/~cfd5z/}}

\ArticleDates{Received November 26, 2015, in f\/inal form March 23, 2016; Published online March 27, 2016}	

\Abstract{For each irreducible module of the symmetric group on~$N$ objects there is a~set
of parametrized nonsymmetric Jack polynomials in~$N$ variables taking values in
the module. These polynomials are simultaneous eigenfunctions of a commutative
set of operators, self-adjoint with respect to certain Hermitian forms. These
polynomials were studied by the author and J.-G.~Luque using a Yang--Baxter
graph technique. This paper constructs a~matrix-valued measure on the $N$-torus
for which the polynomials are mutually orthogonal. The construction uses
Fourier analysis techniques. Recursion relations for the Fourier--Stieltjes
coef\/f\/icients of the measure are established, and used to identify parameter
values for which the construction fails. It is shown that the absolutely
continuous part of the measure satisf\/ies a f\/irst-order system of dif\/ferential equations.}

\Keywords{nonsymmetric Jack polynomials; Fourier--Stieltjes coef\/f\/icients; matrix-valued measure; symmetric group modules}

\Classification{33C52; 42B10; 20C30; 46G10; 35F35}

\renewcommand{\thefootnote}{\arabic{footnote}}
\setcounter{footnote}{0}

\section{Introduction}

The Jack polynomials form a parametrized basis of symmetric polynomials. A~special case of these consists of the Schur polynomials, important in the
character theory of the symmetric groups. By means of a commutative algebra of
dif\/ferential-dif\/ference operators the theory was extended to nonsymmetric Jack
polynomials, again a parametrized basis but now for all polynomials in~$N$
variables. These polynomials are orthogonal for several dif\/ferent inner
products, and in each case they are simultaneous eigenfunctions of a~commutative set of self-adjoint ope\-ra\-tors. These inner products are invariant
under permutations of the coordinates, that is, the symmetric group. One of
these inner products is that of $L^{2}\big( \mathbb{T}^{N},K_{\kappa}(x) \mathrm{d}m(x) \big) $, where
\begin{gather*}
\mathbb{T}^{N} :=\big\{ x\in\mathbb{C}^{N}\colon \vert x_{j} \vert
=1,\, 1\leq j\leq N\big\} ,\\
\mathrm{d}m(x) =(2\pi) ^{-N}\mathrm{d}
\theta_{1}\cdots\mathrm{d}\theta_{N},\qquad x_{j}=\exp ( \mathrm{i}\theta
_{j} ) ,\qquad -\pi<\theta_{j}\leq\pi,1\leq j\leq N,\\
K_{\kappa}(x) =\prod_{1\leq i<j\leq N} \vert x_{i}
-x_{j} \vert ^{2\kappa},\qquad \kappa>-\frac{1}{N};
\end{gather*}
def\/ining the $N$-torus, the Haar measure on the torus, and the weight function
respectively. Beerends and Opdam~\cite{Beerends/Opdam1993} discovered this
orthogonality property of symmetric Jack polyno\-mials. Opdam~\cite{Opdam1995}
established orthogonality structures on the torus for trigonometric
polynomials associated with Weyl groups; the nonsymmetric Jack polynomials
form a special case. Details on the derivation of the norm formulae can be
found in the treatise by Xu and the author~\cite[Section~10.6.3]{Dunkl/Xu2014}. The
weight function~$K_{\kappa}$ turned out to be the square of the base state for
the Calogero--Sutherland quantum mechanical model of~$N$ identical particles
located at $x_{1},x_{2},\ldots,x_{N}$ on the circle with a $1/r^{2}$
potential. This means that the particles repel each other with a force
corresponding to a potential $C \vert x_{i}-x_{j} \vert ^{-2}$. See
Lapointe and Vinet~\cite{Lapointe/Vinet1996} for the construction of wavefunctions
in terms of Jack polynomials for this model. More recently Grif\/feth~\cite{Griffeth2010} constructed vector-valued Jack polynomials for the family
$G\left( n,p,N\right) $ of complex ref\/lection groups. These are the groups
of permutation matrices (exactly one nonzero entry in each row and each
column) whose nonzero entries are $n^{th}$ roots of unity and the product of
these entries is a $ ( n/p)^{\rm th}$ root of unity. The symmetric
groups and the hyperoctahedral groups are the special cases $G(1,1,N) $ and $G( 2,1,N) $ respectively. The term
``vector-valued'' means that the polynomials
take values in irreducible modules of the underlying group, and the action of
the group is on the range as well as the domain of the polynomials. The author~\cite{Dunkl2010} together with Luque~\cite{Dunkl/Luque2011} investigated the
symmetric group case more intensively. The results from these two papers are
the foundation for the present work.

Since the torus structure is such an important aspect of the theory of Jack
polynomials it seemed like an obvious research topic to f\/ind the role of the
torus in the vector-valued Jack case. Is there a matrix-valued weight function
on the torus for which the vector-valued Jack polynomials are mutually
orthogonal? Some explorations in the $N=3$ and $N=4$ situation showed that the
theory is much more complicated than the ordinary (scalar) case. For
two-dimensional representations the weight function has hypergeometric
function entries (see \cite{Dunkl2014}); this is quite dif\/ferent from the
rather natural product $\prod\limits_{1\leq i<j\leq N} \vert x_{i}-x_{j}\vert ^{2\kappa}$, a power of the discriminant.

In this paper we will produce a matrix-valued measure on the torus for which
the vector-valued nonsymmetric polynomials are mutually orthogonal. The result
applies to arbitrary irreducible representations of the symmetric groups. In
each case there is a permitted range of the parameter. We start with a concise
outline of the def\/initions and construction of the polynomials using the
Yang--Baxter graph technique in Section~\ref{section2}, based on~\cite{Dunkl2010} and~\cite{Dunkl/Luque2011}. Section~\ref{section3} contains the construction of the abstract
Hermitian form which is designed to act like an integral over the torus; that
is, multiplication by a coordinate function $x_{j}$ is an isometry. The method
is algebraic and based on the Yang--Baxter graph. In Section~\ref{section4} we use
techniques from Fourier analysis to produce the desired measure. The Section
begins by using the formulae from the previous sections to def\/ine the
hypothetical Fourier--Stieltjes coef\/f\/icients, def\/ined on~$\mathbb{Z}^{N}$ which
is the dual group of the torus, a multiplicative group, and then applying a
matrix version of a~theorem of Bochner about positive-def\/inite functions to
get the measure. There is an application of approximate-identity theory using
a Ces\`{a}ro kernel to construct a sequence of positive matrix-valued Laurent
polynomials which converges to the orthogonality measure.

Section~\ref{section5} develops a recurrence relation satisf\/ied by the Fourier--Stieltjes
coef\/f\/icients of the orthogonality measure. The relation allows an inductive
calculation for the coef\/f\/icients (but actual work, even with symbolic
computation software, may not be feasible unless the dimensions are reasonably
small), and it describes the list of parameter values (certain rational
numbers) for which the construction fails.

The scalar weight function on the torus
\begin{gather*}
K_{\kappa}(x) :=\prod\limits_{1\leq i<j\leq N}\big\{ (
x_{i}-x_{j} ) \big( x_{i}^{-1}-x_{j}^{-1}\big) \big\} ^{\kappa}
\end{gather*}
satisf\/ies a f\/irst-order dif\/ferential system,
\begin{gather*}
x_{i}\frac{\partial}{\partial x_{i}}K_{\kappa}(x) =\kappa
K(x) \sum_{j\neq i}\frac{x_{i}+x_{j}}{x_{i}-x_{j}}, \qquad 1\leq i\leq
N.
\end{gather*}
In Section~\ref{section6} we show that there is an analogous matrix dif\/ferential system
which is solved in a distribution sense by the orthogonality measure. We
outline a result asserting that the orthogonality measure restricted to the
complement of $\bigcup\limits_{1\leq i<j\leq N} \{ x\colon x_{i}=x_{j} \}
$ is equal to an analytic solution of the dif\/ferential system times the Haar
measure~$\mathrm{d}m$. Finally there is Appendix~\ref{appendixA} containing some technical
background results.

\section{Vector-valued Jack polynomials and the Yang--Baxter graph}\label{section2}

This is a summary of the def\/initions and results from \cite{Dunkl2010} and
\cite{Dunkl/Luque2011}. For $x= ( x_{1},\ldots,x_{N} ) \in\mathbb{C}^{N}$
the \textit{monomial} $x^{\alpha}:=\prod\limits_{i=1}^{N}x_{i}^{\alpha_{i}}$, $\alpha= ( \alpha_{1},\ldots,\alpha_{N} ) \in\mathbb{N}_{0}^{N}$,
$ \vert \alpha \vert :=\sum\limits_{i=1}^{N}\alpha_{i}$, $\mathbb{N}_{0}:= \{ 0,1,2,3,\ldots \} $ and~$\alpha$ is called a multi-index
or a composition of $ \vert \alpha \vert $. We denote two
distinguished elements by $\mathbf{0}= ( 0,0,\ldots,0 ) $, and
$\mathbf{1}= ( 1,1,\ldots,1 ) $. The \textit{degree} of $x^{\alpha
}$ is $ \vert \alpha \vert $, and a polynomial is a~f\/inite linear
combination of monomials. The linear space of all polynomials is denoted by~$\mathcal{P}$, and $\mathcal{P}_{n}=\operatorname{span} \{ x^{\alpha
}\colon \vert \alpha \vert =n \} $ is the subspace of polynomials
homogeneous of degree $n$. The specif\/ic polynomials considered here have
coef\/f\/icients in $\mathbb{Q} ( \kappa ) $ where $\kappa$ is
transcendental (indeterminate) but which will also take on certain real
values. The multi-indices $\alpha$ have an important partial order: let
$\alpha^{+}$ denote the nonincreasing rearrangement of~$\alpha$, for example
if $\alpha= ( 1,2,1,4 ) $ then $\alpha^{+}= ( 4,2,1,1 )
$. Let $\mathbb{N}_{0}^{N,+}$ denote the set of partition multi-indices, that
is, $\big\{ \lambda\in\mathbb{N}_{0}^{N}\colon \lambda_{1}\geq\lambda_{2}%
\geq\cdots\geq\lambda_{N}\big\}$.

\begin{Definition}
\begin{gather*}
\alpha \prec\beta~\Longleftrightarrow\sum_{j=1}^{i}\alpha_{j}\leq\sum
_{j=1}^{i}\beta_{j},\qquad 1\leq i\leq N,\qquad \alpha\neq\beta,\\
\alpha\vartriangleleft\beta \Longleftrightarrow ( \vert
\alpha \vert = \vert \beta \vert ) \wedge\big[ (
\alpha^{+}\prec\beta^{+} ) \vee ( \alpha^{+}=\beta^{+}\wedge
\alpha\prec\beta ) \big].
\end{gather*}
\end{Definition}

For example $(3,2,1) \vartriangleleft ( 0,2,4 )
\vartriangleleft ( 4,0,2 ) $, while $ ( 4,1,1 )$, $( 3,3,0) $ are not $\vartriangleleft$-comparable. The
\textit{symmetric group} $\mathcal{S}_{N}$, the set of permutations of
$\{ 1,2,\ldots,N\} $, acts on $\mathbb{C}^{N}$ by permutation of
coordinates. The action is extended to polynomials by $wp(x)
=p ( xw ) $ where $(xw) _{i}=x_{w(i)}$
(consider $x$ as a row vector and $w$ as a permutation matrix, $[w] _{ij}=\delta_{i,w(j) }$, then $xw=x[w]
$). This is a representation of $\mathcal{S}_{N}$, that is, $w_{1}(w_{2}p) (x) =( w_{2}p) (
xw_{1}) =p( xw_{1}w_{2}) =( w_{1}w_{2})
p(x) $ for all $w_{1},w_{2}\in\mathcal{S}_{N}$.

Furthermore $\mathcal{S}_{N}$ is generated by ref\/lections in the mirrors
$\{ x\colon x_{i}-x_{j}=0\} $ for $1\leq i<j\leq N$. These are
\textit{transpositions, }denoted by $(i,j) $, interchanging
$x_{i}$ and $x_{j}$. Def\/ine the $\mathcal{S}_{N}$-action on $\alpha
\in\mathbb{N}_{0}^{N}$ so that $(xw) ^{\alpha}=x^{w\alpha}$%
\begin{gather*}
(xw) ^{\alpha}=\prod_{i=1}^{N}x_{w(i) }
^{\alpha_{i}}=\prod_{j=1}^{N}x_{j}^{\alpha_{w^{-1}(j) }},
\end{gather*}
that is $(w\alpha) _{i}=\alpha_{w^{-1}(i) }$ (take
$\alpha$ as column vector, then $w\alpha=[w] \alpha$).

The \textit{simple reflections} $s_{i}:=(i,i+1) $, $1\leq i\leq
N-1$, generate $\mathcal{S}_{N}$. They are the key devices for applying
inductive methods, and satisfy the \textit{braid} relations:
\begin{gather*}
s_{i}s_{j} =s_{j}s_{i}, \qquad \vert i-j \vert \geq2;\\
s_{i}s_{i+1}s_{i} =s_{i+1}s_{i}s_{i+1}.
\end{gather*}

\looseness=-1
We consider the situation where the group $\mathcal{S}_{N}$ acts on the range
as well as on the domain of the polynomials. We use vector spaces (called
$\mathcal{S}_{N}$-modules) on which $\mathcal{S}_{N}$ has an irreducible
unitary (orthogonal) representation: $\tau\colon \mathcal{S}_{N}\rightarrow
O_{m} ( \mathbb{R} ) $ ($\tau(w) ^{-1}=\tau(w^{-1}) =\tau(w) ^{T}$). See James and Kerber~\cite{James/Kerber1981} for representation theory, including a modern discussion of
Young's methods. We will specify an orthogonal basis and the images
$\tau( s_{i}) $ for each $i$, which suf\/f\/ices for our purposes.
Identify $\tau$ with a partition of $N$: $( \tau_{1},\tau_{2},\ldots) \in\mathbb{N}_{0}^{N,+}$ such that $\vert \tau\vert
=N$. The length of $\tau$ is $\ell(\tau) =\max \{
i\colon \tau_{i}>0 \} $. There is a Ferrers diagram of shape $\tau$ (this
diagram is given the same name), with boxes at points $(i,j) $
with $1\leq i\leq\ell(\tau) $ and $1\leq j\leq\tau_{i}$. A~\textit{tableau} of shape $\tau$ is a~f\/illing of the boxes with numbers, and a
\textit{reverse standard Young tableau} (RSYT) is a f\/illing with the numbers
$ \{ 1,2,\ldots,N \} $ so that the entries decrease in each row and
each column. We exclude the one-dimensional representations corresponding to
one-row $(N) $ or one-column $( 1,1,\ldots,1) $
partitions, that is, we require $\dim V_{\tau}\geq2$. The \textit{hook-length}
of the node $(i,j) \in\tau$ is def\/ined to be
\begin{gather*}
\operatorname{hook} ( \tau;i,j ) :=\tau_{i}-j+\#\big\{ k\colon i<k\leq
\ell(\tau) \wedge j\leq\tau_{k}\big\} +1.
\end{gather*}
We will need the key quantity $h_{\tau}:=\operatorname{hook} ( \tau;1,1 )
=\tau_{1}+\ell(\tau) -1$, the maximum hook-length of the diagram.

\begin{Example}
Here are the Ferrers diagram, a (column-strict) tableau, and an RSYT, all of
shape $(5,3,2)$
\begin{gather*}
\begin{matrix}
\square & \square & \square & \square & \square\\
\square & \square & \square & & \\
\square & \square & & &
\end{matrix}
, \qquad
\begin{bmatrix}
0 & 0 & 1 & 2 & 3\\
1 & 2 & 2 & & \\
2 & 4 & & &
\end{bmatrix}
,\qquad
\begin{bmatrix}
10 & 7 & 4 & 2 & 1\\
9 & 6 & 3 & & \\
8 & 5 & & &
\end{bmatrix}.
\end{gather*}
\end{Example}

Denote the set of RSYT's of shape $\tau$ by $\mathcal{Y}(\tau) $
and let $V_{\tau}:=\operatorname{span} \{ T\colon T\in\mathcal{Y}(\tau)
 \} $ (the f\/ield is $\mathbb{C}(\kappa) $) with orthogonal
basis $\mathcal{Y}(\tau) $. Furthermore $\dim V_{\tau
}=\#\mathcal{Y}(\tau) =N!/\prod\limits_{(i,j)
\in\tau}\operatorname{hook} ( \tau;i,j ) $. For $1\leq i\leq N$ and
$T\in\mathcal{Y}(\tau) $ the entry~$i$ is at coordinates
$( \operatorname{rw}(i,T) ,\operatorname{cm}(i,T)) $ and the
\textit{content} is $c(i,T) :=\operatorname{cm}(i,T) -\operatorname{rw}(i,T) $. Each $T\in\mathcal{Y}(\tau) $ is uniquely
determined by its \textit{content vector} $[ c(i,T)
] _{i=1}^{N}$. For the example $\tau=(3,1) $
\begin{gather*}
\begin{bmatrix}
4 & 2 & 1\\
3 & &
\end{bmatrix}
,\qquad
\begin{bmatrix}
4 & 3 & 1\\
2 & &
\end{bmatrix}
,\qquad
\begin{bmatrix}
4 & 3 & 2\\
1 & &
\end{bmatrix}
\end{gather*}
the list of content vectors is $[ 2,1,-1,0]$, $[
2,-1,1,0]$, $[-1,2,1,0] $. To recover $T$ from its content
vector f\/ill in the entries starting with $N$, then $N-1$ ($c(
N-1,T) =\pm1$) has two possibilities and so on.

\begin{Example}
The list of $\mathcal{Y}(\tau) $ for $\tau=( 3,1,1)$, $N=5$
\begin{gather*}
\begin{bmatrix}
5 & 2 & 1\\
4 & & \\
3 & &
\end{bmatrix}
,\quad
\begin{bmatrix}
5 & 3 & 1\\
4 & & \\
2 & &
\end{bmatrix}
,\quad
\begin{bmatrix}
5 & 3 & 2\\
4 & & \\
1 & &
\end{bmatrix}
,\quad
\begin{bmatrix}
5 & 4 & 1\\
3 & & \\
2 & &
\end{bmatrix}
,\quad
\begin{bmatrix}
5 & 4 & 2\\
3 & & \\
1 & &
\end{bmatrix}
,\quad
\begin{bmatrix}
5 & 4 & 3\\
2 & & \\
1 & &
\end{bmatrix}.
\end{gather*}
The corresponding list of content vectors is $ [ 2,1,-2,-1,0 ] $,
$ [ 2,-2,1,-1,0 ] $, $ [ -2,2,1,-1,0 ] $, $ [
2,-2,-1,1,0 ] $, $ [ -2,2,-1,1,0 ] $, $ [
-2,-1,2,1,0 ] $.
\end{Example}

The representation theory can be developed using the content vectors in place
of tableaux; this is due to Okounkov and Vershik~\cite{Okounkov/Vershik2005}.

\subsection[Description of the representation $\tau$]{Description of the representation $\boldsymbol{\tau}$}\label{proptau}

The formulae for the action of $\tau(s_{i}) $ on the basis
$\mathcal{Y}(\tau) $ are from Murphy \cite[Theorem~3.12]{Murphy1981}. Def\/ine $b_{i}(T) :=1/( c(i,T)
-c( i+1,T) ) $. Note that $c(i,T) -c(
i+1,T) =0$ is impossible for RSYT's. If $\vert c(
i,T) -c( i+1,T) \vert \geq2$ let $T^{(
i) }\in\mathcal{Y}(\tau) $ denote $T$ with $i$, $i+1$
interchanged. The following describes the action of $\tau(s_{i})$ (in each case there is an informal subrectangle description of the relative
positions of $i$ and $i+1$ in~$T$; in cases~(3) and~(4) $i$ and $i+1$ are not
necessarily in adjacent rows or columns)
\begin{enumerate}\itemsep=0pt
\item \label{taustep1}If $\operatorname{rw}(i,T) =\operatorname{rw}(i+1,T) $
then $\tau(s_{i}) T=T$; position is $[ i+1,i] $,
$b_{i}(T) =1$.

\item \label{taustep2}If $\operatorname{cm}(i,T) =\operatorname{cm}(i+1,T) $
then $\tau(s_{i}) T=-T$; position is $
\begin{bmatrix}
i+1\\
i
\end{bmatrix}
$, $b_{i}(T) =-1$.

\item if $\operatorname{rw}(i,T) <\operatorname{rw}(i+1,T)$ (then $\operatorname{cm}(i,T)
>\operatorname{cm}(i+1,T) $), position $
\begin{bmatrix}
\ast & i\\
i+1 & \ast
\end{bmatrix}
$, $c(i,T) \geq ( \operatorname{cm}(i+1,T) +1 )
- ( \operatorname{rw}(i+1,T) -1 ) \geq c(i+1,T)
+2$, $0<b_{i}(T) \leq\frac{1}{2}$ then%
\begin{gather*}
\tau(s_{i}) T =T^{(i) }+b_{i}(T)
T,\\
\tau(s_{i}) T^{(i) } = \big( 1-b_{i} (
T ) ^{2} \big) T-b_{i}(T) T^{(i) }.
\end{gather*}

\item if $\operatorname{rw}(i,T) >\operatorname{rw}(i+1,T)$ (and $\operatorname{cm}(i,T)
<\operatorname{cm}(i+1,T) $), position $
\begin{bmatrix}
\ast & i+1\\
i & \ast
\end{bmatrix}
$; the formula is found in case (3) interchanging $T$ and $T^{(
i) }$, and using $b_{i}(T) =-b_{i}( T^{(
i) }) $.
\end{enumerate}

To eliminate extra parentheses we will write $\tau(i,j) $ for
$\tau((i,j)) $; where $(i,j) $ is a transposition.

There is a (unique up to constant multiple) positive Hermitian form on
$V_{\tau}$ for which $\tau$ is unitary (real orthogonal), that is
$\langle \tau(w) S_{1},S_{2}\rangle _{0}=\langle
S_{1},\tau(w) ^{-1}S_{2}\rangle _{0}\allowbreak
=\langle S_{1},\tau(w) ^{\ast}S_{2}\rangle _{0}$,
($S_{1},S_{2}\in V_{\tau}$, $w\in\mathcal{S}_{N}$):

\begin{Definition}
\begin{gather*}
\big\langle T,T^{\prime}\big\rangle _{0}:=\delta_{T,T^{\prime}}\times
\prod_{\substack{1\leq i<j\leq N,\\c(i,T) \leq c (
j,T ) -2}}\left( 1-\frac{1}{( c(i,T) -c (
j,T ) ) ^{2}}\right) ,\qquad T,T^{\prime}\in\mathcal{Y} (
\tau ).
\end{gather*}
\end{Definition}

The verif\/ication of the unitary property is based on the relation
\begin{gather*}
 \big\langle T^{(i) },T^{(i) } \big\rangle
_{0}=\big( 1-b_{i}(T) ^{2}\big) \langle
T,T \rangle _{0}
\end{gather*}
 when $0<b_{i}(T) \leq\frac{1}{2}$. Each
$\tau(w) $ is an orthogonal matrix with respect to the
orthonormal basis $ \big\{ \langle T,T \rangle _{0}^{-1/2}T\colon T\in\mathcal{Y}(\tau) \big\} $. The basis vectors $T$ are
simultaneous eigenvectors of the (reverse) \textit{Jucys--Murphy elements}
$\omega_{i}:=\sum\limits_{j=i+1}^{N}(i,j) $ (with $\omega_{N}=0$),
which commute pairwise and $\tau(\omega_{i}) T=c(i,T) T,$ for $1\leq i\leq N$ (see \cite[Lemma~3.6]{Murphy1981}); as
usual, $\tau$ is extended to a~homomorphism of the group algebra
$\mathbb{C}\mathcal{S}_{N}$ by $\tau\left( \sum_{w}b_{w}w\right) =\sum
_{w}b_{w}\tau(w) $.

\subsection{Vector-valued nonsymmetric Jack polynomials}

The main concern of this paper is $\mathcal{P}_{\tau}=\mathcal{P}\otimes
V_{\tau}$, the space of $V_{\tau}$ valued polynomials in $x$, which is
equipped with the $\mathcal{S}_{N}$ action:
\begin{gather*}
w\big( x^{\alpha}\otimes T\big) =(xw) ^{\alpha}\otimes
\tau(w) T,\qquad \alpha\in\mathbb{N}_{0}^{N}, \qquad T\in\mathcal{Y} (\tau) ,
\end{gather*}
extended by linearity to
\begin{gather*}
wp(x) =\tau(w) p(xw) ,\qquad p\in
\mathcal{P}_{\tau}.
\end{gather*}

\begin{Definition}
The \textit{Dunkl} and \textit{Cherednik--Dunkl} operators are ($1\leq i\leq
N$, $p\in\mathcal{P}_{\tau}$)
\begin{gather*}
\mathcal{D}_{i}p(x) :=\partial_{i}p(x)
+\kappa\sum_{j\neq i}\tau(i,j) \frac{p(x)
-p(x(i,j)) }{x_{i}-x_{j}},\\
\mathcal{U}_{i}p(x) :=\mathcal{D}_{i}( x_{i}p(
x)) -\kappa\sum_{j=1}^{i-1}\tau(i,j) p(x(i,j)) .
\end{gather*}
\end{Definition}

The commutation relations analogous to the scalar case hold, that is,
\begin{gather*}
\mathcal{D}_{i}\mathcal{D}_{j} =\mathcal{D}_{j}\mathcal{D}_{i},\qquad
\mathcal{U}_{i}\mathcal{U}_{j}=\mathcal{U}_{j}\mathcal{U}_{i}, \qquad 1\leq i,j\leq N,\\
w\mathcal{D}_{i} =\mathcal{D}_{w(i) }w, \qquad \forall\,
w\in\mathcal{S}_{N}, \qquad s_{j}\mathcal{U}_{i}=\mathcal{U}_{i}s_{j}, \qquad j\neq i-1,i,\\
s_{i}\mathcal{U}_{i}s_{i} =\mathcal{U}_{i+1}+\kappa s_{i}, \qquad \mathcal{U}
_{i}s_{i}=s_{i}\mathcal{U}_{i+1}+\kappa, \qquad \mathcal{U}_{i+1}s_{i}=s_{i}\mathcal{U}_{i}-\kappa.
\end{gather*}
The simultaneous eigenfunctions of $ \{\mathcal{U}_{i}\} $ are
called (vector-valued) nonsymmetric Jack polynomials (NSJP). For generic
$\kappa$ these eigenfunctions form a basis of $\mathcal{P}_{\tau}$ (we will
specify the excluded rational values in the sequel). They have a triangularity
property with respect to the partial order $\vartriangleright$. However the
structure does not merely rely on leading terms of the type $x^{\alpha}\otimes
T$. We need the rank function:

\begin{Definition}
For $\alpha\in\mathbb{N}_{0}^{N}$, $1\leq i\leq N$
\begin{gather*}
r_{\alpha}(i) :=\# \{ j\colon \alpha_{j}>\alpha_{i} \}
+\# \{ j\colon 1\leq j\leq i,\, \alpha_{j}=\alpha_{i} \} ,
\end{gather*}
then $r_{\alpha}\in\mathcal{S}_{N}.$
\end{Definition}

A consequence is that $r_{\alpha}\alpha=\alpha^{+}$, the nonincreasing
rearrangement of~$\alpha$, for any $\alpha\in\mathbb{N}_{0}^{N}$ . For example
if $\alpha= ( 1,2,1,4 ) $ then $r_{\alpha}= [ 3,2,4,1 ]
$ and $r_{\alpha}\alpha=\alpha^{+}= ( 4,2,1,1 ) $ (recall
$w\alpha_{i}=\alpha_{w^{-1}(i) }$). Also $r_{\alpha}=I$ if and
only if $\alpha$ is a partition ($\alpha_{1}\geq\alpha_{2}\geq\cdots\geq
\alpha_{N}$).

\looseness=-1
For each $\alpha\in\mathbb{N}_{0}^{N}$ and $T\in\mathcal{Y} (
\tau ) $ there is a NSJP $\zeta_{\alpha,T}$ with leading term
$x^{\alpha}\otimes\tau\big( r_{\alpha}^{-1}\big) T$, that is,
\begin{gather*}
\zeta_{\alpha,T} =x^{\alpha}\otimes\tau\big( r_{\alpha}^{-1}\big)
T+\sum_{\alpha\vartriangleright\beta}x^{\beta}\otimes t_{\alpha\beta} (
\kappa ) ,\qquad t_{\alpha\beta}(\kappa) \in V_{\tau},\\
\mathcal{U}_{i}\zeta_{\alpha,T} =\big( \alpha_{i}+1+\kappa c (
r_{\alpha}(i) ,T ) \big) \zeta_{\alpha,T}, \qquad 1\leq i\leq
N.
\end{gather*}

\subsection{The Yang--Baxter graph}

The NSJP's can be constructed by means of a Yang--Baxter graph. The details are
in~\cite{Dunkl/Luque2011}; this paper has several f\/igures illustrating some typical graphs.

A node consists of
\begin{gather*}
 ( \alpha,T,\xi_{\alpha.T},r_{\alpha},\zeta_{\alpha,T} ),
\end{gather*}
where $\alpha\in\mathbb{N}_{0}^{N}$, $\xi_{\alpha,T}$ is the spectral vector
$\xi_{\alpha,T}(i) =\alpha_{i}+1+\kappa c ( r_{\alpha
}(i) ,T ) $, $1\leq i\leq N$. The root is $\big(
\mathbf{0},T_{0}, [ 1+\kappa c ( i,T_{0} ) ] _{i=1}
^{N},I,1\otimes T_{0} \big) $ where $T_{0}$ is formed by entering
$N,N-1,\ldots,1$ column-by-column in the Ferrers diagram, for example
$\tau= ( 3,3,1 ) $
\begin{gather*}
T_{0}=
\begin{bmatrix}
7 & 4 & 2\\
6 & 3 & 1\\
5 & &
\end{bmatrix}
,\qquad c ( \cdot,T_{0}) =[ 1,2,0,1,-2,-1,0] .
\end{gather*}
There is an adjacency relation in $\mathcal{Y}(\tau) $ based on
the positions of the pairs $\ \{ i,i+1 \} $ and an inversion counter.

\begin{Definition}
For $T\in\mathcal{Y}(\tau) $ set
\begin{gather*}
\operatorname{inv}(T) :=\#\big\{ (i,j) \colon i<j,\, c (
i,T ) -c ( j,T ) \leq-2\big\} .
\end{gather*}
\end{Definition}

Recall from Section~\ref{proptau} that there are four types of positions of
a given pair $ \{ i,i+1 \} $ in~$T$, and in case~(3) it is
straightforward to check that $\operatorname{inv} ( T^{(i)
} ) =\operatorname{inv}(T) +1$.

If $\alpha_{i}\neq\alpha_{i+1}$ then $r_{s_{i}\alpha}=r_{\alpha}s_{i}$. The
cycle $w_{0}:= ( 123\ldots N ) $ and the af\/f\/ine transformation%
\begin{gather*}
\Phi ( a_{1},a_{2},\ldots,a_{N} ) := ( a_{2},a_{3},\ldots,a_{N},a_{1}+1 )
\end{gather*}
are fundamental parts of the construction; and $r_{\Phi\alpha}=r_{\alpha}w_{0}$ for any $\alpha$, that is,
\begin{gather*}
r_{\alpha}w_{0}(i) =r_{\alpha} ( w_{0}(i)
 ) =r_{\alpha}(i+1) =r_{\Phi\alpha}(i)
, \qquad 1\leq i<N,\\
r_{\alpha}w_{0}(N) =r_{\alpha} ( w_{0}(N)
 ) =r_{\alpha}(1) =r_{\Phi\alpha}(N) .
\end{gather*}

The \textit{jumps} in the graph, which raise the degree by one, are
\begin{gather}
 ( \alpha,T,\xi_{\alpha,T},r_{\alpha},\zeta_{\alpha,T} )
\overset{\Phi}{\longrightarrow} \big( \Phi\alpha,T,\Phi\xi_{\alpha
,T},r_{\alpha}w_{0},x_{N}w_{0}^{-1}\zeta_{\alpha,T}\big) ,\label{zjump}\\
\zeta_{\Phi\alpha,T}=x_{N}w_{0}^{-1}\zeta_{\alpha,T}\nonumber
\end{gather}
the leading term is $x^{\Phi\alpha}\otimes\tau\big( w_{0}^{-1}r_{\alpha
}^{-1}\big) T$ and $w_{0}^{-1}r_{\alpha}^{-1}= ( r_{\alpha}
w_{0} ) ^{-1}$. For example: $\alpha= (0,3,5,0)$,
$r_{\alpha}= [ 3,2,1,4 ]$, $\Phi\alpha= ( 3,5,0,1 )$,
$r_{\Phi\alpha}= [ 2,1,4,3 ] $.

There are two types of \textit{steps}, labeled by $s_{i}$:
\begin{enumerate} \itemsep=0pt
\item If $\alpha_{i}<\alpha_{i+1}$, then
\begin{gather*}
 ( \alpha,T,\xi_{\alpha,T},r_{\alpha},\zeta_{\alpha,T} )
\overset{s_{i}}{\longrightarrow} ( s_{i}\alpha,T,s_{i}\xi_{\alpha
,T},r_{\alpha}s_{i},\zeta_{s_{i}\alpha,T} ),
\\
\zeta_{s_{i}\alpha,T}=s_{i}\zeta_{\alpha,T}-\frac{\kappa}{\xi_{\alpha
,T}(i) -\xi_{\alpha,T}(i+1) }\zeta_{\alpha,T}.
\end{gather*}
Observe that this construction is valid provided $\xi_{\alpha,T} (
i ) \neq\xi_{\alpha,T}(i+1) $, that is, $\alpha
_{i+1}-\alpha_{i}\neq\kappa ( c ( r_{\alpha}(i)
,T ) -c ( r_{\alpha}(i+1) ,T ) ) $. The
extreme values of $c ( \cdot,T ) $ are $\tau_{1}-1$ and
$1-\ell(\tau) $, thus $ \vert c ( r_{\alpha} (
i ) ,T ) -c ( r_{\alpha+1}(i) ,T )
 \vert \allowbreak\leq h_{\tau}-1$. Furthermore $\alpha_{i+1}-\alpha
_{i}\geq1$ and the step is valid provided $\kappa m\notin \{
1,2,3,\dots \} $ for $m=1-h_{\tau},2-h_{\tau},\ldots,h_{\tau}-1$. The
bound $-1/( h_{\tau}-1) <\kappa<1/( h_{\tau}-1)$
is suf\/f\/icient.

\item If $\alpha_{i}=\alpha_{i+1}$, and the positions of $j:=r_{\alpha}(
i )$, $j+1$ in $T$ are of type (3), that is, $c(j,T)
-c(j+1,T) \geq2$ (the def\/inition of $r_{\alpha}$ implies
$r_{\alpha}(i+1) =j+1$ and $s_{i}r_{\alpha}^{-1}=r_{\alpha}%
^{-1}s_{j}$). Set
\begin{gather*}
b^{\prime}=\frac{1}{c(j,T) -c(j+1,T) }
=\frac{\kappa}{\xi_{\alpha,T}(i) -\xi_{\alpha,T}(
i+1) };
\end{gather*}
thus $0<b^{\prime}\leq\frac{1}{2}$, there is a step%
\begin{gather*}
 ( \alpha,T,\xi_{\alpha,T},r_{\alpha},\zeta_{\alpha,T} )
\overset{s_{i}}{\longrightarrow} \big( \alpha,T^{(j) },s_{i}
\xi_{\alpha,T},r_{\alpha},\zeta_{\alpha,T^{(j) }}\big) ,
\\
\zeta_{\alpha,T^{(j) }}=s_{i}\zeta_{\alpha,T}-b^{\prime}
\zeta_{\alpha,T},
\end{gather*}
($T^{(j) }$ is the result of interchanging $j$ and $j+1$ in
$T$). The leading term is transformed $s_{i}\big( x^{\alpha}\otimes
\tau\big( r_{\alpha}^{-1}\big) T\big) = ( xs_{i} )
^{\alpha}\otimes\tau\big( s_{i}r_{\alpha}^{-1}\big) T\allowbreak
=x^{\alpha}\otimes\tau\big( r_{\alpha}^{-1}\big) \tau (
s_{j} ) T$ and $\tau ( s_{j} ) T=T^{(j)}+b^{\prime}T$.
\end{enumerate}

There are two other possibilities corresponding to~(\ref{taustep1}) and~(\ref{taustep2}) for the action of $s_{i}$ on $\zeta_{\alpha,T}$ when
$\alpha_{i}=\alpha_{i+1}$ (note $r_{\alpha}(i+1) =r_{\alpha
}(i) +1$): (1) $\operatorname{rw}( r_{\alpha}(i)
,T) =\operatorname{rw}( r_{\alpha}(i) +1,T) $,
then $s_{i}\zeta_{\alpha,T}=\zeta_{\alpha,T}$; (2) $\operatorname{cm}(
r_{\alpha}(i) ,T) =\operatorname{cm}( r_{\alpha}(
i) +1,T) $, then $s_{i}\zeta_{\alpha,T}=-\zeta_{\alpha,T}$.

The proofs that these formulae are mutually compatible for dif\/ferent paths in
the graph from the root $( \mathbf{0},T_{0}) $ to a given node
$(\alpha,T) $, use inductive arguments based on the fact that
these paths have the same length. The number of jumps is clearly $\vert
\alpha\vert $ and the number of steps is $S(\alpha)
+\operatorname{inv}(T) -\operatorname{inv}(T_{0}) $, where
\begin{gather*}
S(\alpha) :=\frac{1}{2}\sum_{1\leq i<j\leq N} (\vert
\alpha_{i}-\alpha_{j} \vert + \vert \alpha_{i}-\alpha_{j}
+1 \vert -1 ) .
\end{gather*}

\section{Hermitian forms}\label{section3}

For a complex vector space $V$ a Hermitian form is a mapping $ \langle
\cdot,\cdot \rangle\colon V\otimes V\rightarrow\mathbb{C}$ such that
$\langle u,cv\rangle =c \langle u,v \rangle $,
$ \langle u,v_{1}+v_{2} \rangle = \langle u,v_{1} \rangle
+ \langle u,v_{2} \rangle $ and $ \langle u,v \rangle
=\overline{ \langle v,u \rangle }$ for $u,v_{1},v_{2}\in V$,
$c\in\mathbb{C}$. The form is positive semidef\/inite if $ \langle
u,u \rangle \geq0$ for all $u\in V$. The concern of this paper is with a~particular Hermitian form on $\mathcal{P}_{\tau}$ which has the properties
(for all $f,g\in\mathcal{P}_{\tau}$):
\begin{gather}
\big\langle 1\otimes T,1\otimes T^{\prime}\big\rangle =\big\langle
T,T^{\prime}\big\rangle _{0}, \qquad T,T^{\prime}\in\mathcal{Y}(\tau),\label{admforms}\\
 \langle wf,wg \rangle = \langle f,g \rangle, \qquad w\in\mathcal{S}_{N},\nonumber\\
 \langle x_{i}\mathcal{D}_{i}f,g \rangle = \langle
f,x_{i}\mathcal{D}_{i}g \rangle , \qquad 1\leq i\leq N.\nonumber
\end{gather}
The commutation $\mathcal{U}_{i}=\mathcal{D}_{i}x_{i}-\kappa\sum\limits_{j<i}(
i,j) =x_{i}\mathcal{D}_{i}+1+\kappa\sum\limits_{j>i}(i,j) $
together with $ \langle (i,j) f,g \rangle = \langle
f,(i,j) g \rangle $ show that $ \langle \mathcal{U}
_{i}f,g \rangle = \langle f,\mathcal{U}_{i}g \rangle $ for all~$i$. Thus the uniqueness of the spectral vectors discussed above implies that
$ \langle \zeta_{\alpha,T},\zeta_{\beta,T^{\prime}} \rangle =0$
whenever $(\alpha,T) \neq ( \beta,T^{\prime} ) $. In
particular polynomials homogeneous of dif\/ferent degrees are mutually
orthogonal, by the basis property of $ \{ \zeta_{\alpha,T} \} $. We
can deduce contiguity relations corresponding to the steps described above and
implied by the properties of the form. Consider step type~(1) with
\begin{gather*}
s_{i}\zeta_{\alpha,T} =\zeta_{s_{i}\alpha,T}+b^{\prime}\zeta_{\alpha,T},\qquad
b^{\prime} =\frac{\kappa}{\xi_{\alpha,T}(i) -\xi_{\alpha
,T}(i+1) }.
\end{gather*}
The conditions $ \langle s_{i}\zeta_{\alpha,T},s_{i}\zeta_{\alpha
,T} \rangle = \langle \zeta_{\alpha,T},\zeta_{\alpha,T}\rangle
$ and $\langle \zeta_{\alpha,T},\zeta_{s_{i}\alpha,T}\rangle =0$
imply
\begin{gather*}
 \langle \zeta_{\alpha,T},\zeta_{\alpha,T} \rangle =\big\langle
\zeta_{s_{i}\alpha,T}+b^{\prime}\zeta_{\alpha,T},\zeta_{s_{i}\alpha
,T}+b^{\prime}\zeta_{\alpha,T}\big\rangle
 = \langle \zeta_{s_{i}\alpha,T},\zeta_{s_{i}\alpha,T} \rangle
+b^{\prime2} \langle \zeta_{\alpha,T},\zeta_{\alpha,T} \rangle ,\\
 \langle \zeta_{s_{i}\alpha,T},\zeta_{s_{i}\alpha,T} \rangle
=\big( 1-b^{\prime2}\big) \langle \zeta_{\alpha,T},\zeta_{\alpha
,T} \rangle .
\end{gather*}
A necessary condition that the form be positive-def\/inite ($f\neq0$ implies
$\langle f,f\rangle >0$) is that $-1<b^{\prime}<1$ in each of the
possible steps. Since (with $j=r_{\alpha}(i) $ and
$\ell=r_{\alpha}(i+1) $)
\begin{gather*}
1-b^{\prime2}=
\frac{[ \alpha_{i+1}-\alpha_{i}+( c( \ell,T)
-c(j,T) +1) \kappa] [ \alpha_{i+1}-\alpha_{i}+( c( \ell,T) -c(j,T) -1)
\kappa] }{[ \alpha_{i+1}-\alpha_{i}+( c(
\ell,T) -c(j,T) ) \kappa] ^{2}},
\end{gather*}
the extreme values of $( c(\ell,T) -c(j,T)
\pm1) $ are $\pm h_{\tau}$, and $\alpha_{i+1}-\alpha_{i}\geq1$, it
follows that $-1/h_{\tau}<\kappa<1/h_{\tau}$ implies $1-b^{\prime2}>0$. Since
steps of type (1) link any $(\alpha,T) $ to $( \alpha
^{+},T) $ one can obtain (with $\varepsilon=\pm1$)
\begin{gather}
\mathcal{E}_{\varepsilon}(\alpha,T) :=\prod_{\substack{1\leq
i<j\leq N\\ \alpha_{i}<\alpha_{j}}}\left( 1+\frac{\varepsilon\kappa}%
{\alpha_{j}-\alpha_{i}+\kappa ( c ( r_{\alpha}(j)
,T ) -c ( r_{\alpha}(i) ,T ) ) }\right),
\label{EEform}\\
\langle \zeta_{\alpha,T},\zeta_{\alpha,T}\rangle =\big(
\mathcal{E}_{1}(\alpha,T) \mathcal{E}_{-1} ( \alpha
,T ) \big) ^{-1} \langle \zeta_{\alpha^{+},T},\zeta_{\alpha
^{+},T} \rangle .\nonumber
\end{gather}

Similarly the steps of type (2) (with $\alpha_{i}=\alpha_{i+1}$ and
$j=r_{\alpha}(i) $, $b^{\prime}=\frac{1}{c(j,T)
-c(j+1,T) }$) imply the relation%
\begin{gather*}
\big\langle \zeta_{\alpha,T^{(j) }},\zeta_{\alpha,T^{(j) }}\big\rangle =\big( 1-b^{\prime2}\big) \langle
\zeta_{\alpha,T},\zeta_{\alpha,T} \rangle .
\end{gather*}

It was shown in \cite{Dunkl2010} (this is a special case of a result of
Grif\/feth \cite[Theorem~6.1]{Griffeth2010}) that the def\/inition for $\lambda
\in\mathbb{N}_{0}^{N,+}$
\begin{gather*}
 \langle \zeta_{\lambda,T},\zeta_{\lambda,T} \rangle
= \langle T,T \rangle _{0}\prod_{i=1}^{N} ( 1+\kappa c (
i,T ) ) _{\lambda_{i}}
 \prod_{1\leq i<j\leq N}\prod_{\ell=1}^{\lambda_{i}-\lambda_{j}
} \left( 1-\left( \frac{\kappa}{\ell+\kappa ( c(i,T)
-c(j,T) ) }\right) ^{2}\right) .
\end{gather*}
together with formula (\ref{EEform}) produce a Hermitian form (called the
covariant form) satis\-fying~(\ref{admforms}) and the additional property
$\langle x_{i}f,g\rangle =\langle f,\mathcal{D}_{i}%
g\rangle $ for all $f,g\in\mathcal{P}_{\tau}$ and $1\leq i\leq N$ (the
bound $-1/h_{\tau}<\kappa<1/h_{\tau}$ for positivity of this form was found by
Etingof and Stoica~\cite{Etingof/Stoica2009}).

Here we want a Hermitian form for which multiplication by any $x_{i}$ is an
isometry, that is, $ \langle x_{i}f,x_{i}g \rangle = \langle
f,g \rangle $ for all $f,g\in\mathcal{P}_{\tau}$ and $1\leq i\leq N$.
Heuristically this should involve an integral over the $N$-torus. The isometry
postulate, and the equations~(\ref{admforms}) determine the form uniquely, as
will be shown. The postulate $\langle x_{N}f,g\rangle =\langle
f,\mathcal{D}_{N}g\rangle $ in the covariant form is used to compute the
ef\/fect of a jump $\zeta_{\Phi\alpha,T}=x_{N}w_{0}^{-1}\zeta_{\alpha,T}$ (see~(\ref{zjump})), that is, to evaluate $\langle \zeta_{\Phi\alpha,T}
,\zeta_{\Phi\alpha,T}\rangle /\langle \zeta_{\alpha,T}
,\zeta_{\alpha,T}\rangle $. From the proofs in \cite[Appendix, Corollary~5, Theorem~10]{Dunkl2010} and~\cite{Dunkl/Luque2011} we see that the factor $\prod\limits_{i=1}^{N} ( 1+\kappa c(i,T) ) _{\lambda_{i}}$ arises from
ratios of this type. This aspect (here we need the ratio to be~$1$) motivates
the following:

\begin{Definition}\label{Tformdef}For $\lambda\in\mathbb{N}_{0}^{N,+}$, $\alpha,\beta
\in\mathbb{N}_{0}^{N}$, and $T,T^{\prime}\in\mathcal{Y}(\tau) $
the Hermitian form $\langle \cdot,\cdot\rangle _{\mathbb{T}}$ on
$\mathcal{P}_{\tau}$ is specif\/ied by
\begin{gather*}
(\alpha,T) \neq ( \beta,T^{\prime} )
\Longrightarrow \langle \zeta_{\alpha,T},\zeta_{\beta,T} \rangle
_{\mathbb{T}}=0,\\
 \langle \zeta_{\lambda,T},\zeta_{\lambda,T} \rangle _{\mathbb{T}}
 = \langle T,T \rangle _{0}\prod_{1\leq i<j\leq N}\prod_{\ell
=1}^{\lambda_{i}-\lambda_{j}}\left( 1-\left( \frac{\kappa}{\ell
+\kappa ( c(i,T) -c(j,T) ) }\right)
^{2}\right) ,\\
 \langle \zeta_{\alpha,T},\zeta_{\alpha,T} \rangle _{\mathbb{T}}
=\big( \mathcal{E}_{1}(\alpha,T) \mathcal{E}_{-1} (
\alpha,T ) \big) ^{-1} \langle \zeta_{\alpha^{+},T},\zeta
_{\alpha^{+},T} \rangle _{\mathbb{T}};
\end{gather*}
the form is extended to all of $\mathcal{P}_{\tau}$ by linearity in the second
variable and Hermitian symmetry, that is, $ \langle f,c_{1}g+c_{2}
h \rangle _{\mathbb{T}}=c_{1} \langle f,g \rangle _{\mathbb{T}
}+c_{2} \langle f,h \rangle _{\mathbb{T}}$ and $\langle
f,g \rangle _{\mathbb{T}}=\overline{\langle g,f\rangle
_{\mathbb{T}}}$, for $f,g,h\in\mathcal{P}_{\tau}$ and $c_{1},c_{2}\in\mathbb{C}$.
\end{Definition}

Observe that the formula is invariant when $\lambda$ is replaced by
$\lambda+m\boldsymbol{1=} ( \lambda_{1}+m,\lambda_{2}+m,\ldots
$, $\lambda_{N}+m )$ for any $m\in\mathbb{N}$. This follows easily from
the commutation (where $e_{N}:=x_{1}x_{2}\cdots x_{N}$)
\begin{gather*}
\mathcal{U}_{i}\big( e_{N}^{m}f\big) =me_{N}^{m}f+e_{N}^{m}
\mathcal{U}_{i}f, \qquad 1\leq i\leq N, \qquad m=1,2,\ldots,
\end{gather*}
thus $\mathcal{U}_{i} ( e_{N}^{m}\zeta_{\alpha,T} ) = (
m+\alpha_{i}+\kappa c ( r_{\alpha}(i) ,T ) )
e_{N}^{m}\zeta_{\alpha,T}$, and $e_{N}^{m}\zeta_{\alpha,T}$ is a simultaneous
eigenfunction of~$ \{ \mathcal{U}_{i} \} $ with the same
eigenvalues and the same leading term as $\zeta_{\alpha+m\boldsymbol{1},T}$.
Hence $\zeta_{\alpha+m\boldsymbol{1},T}=e_{N}^{m}\zeta_{\alpha,T}$. We now
extend the structure of NSJP's to $V_{\tau}$-valued Laurent polynomials,
thereby producing a~basis:

\begin{Definition}
Suppose $\alpha\in\mathbb{Z}^{N}$ then set $\zeta_{\alpha,T}=e_{N}^{-m}
\zeta_{\alpha+m\boldsymbol{1},T}$ where $m\in\mathbb{N}_{0}$ and satisf\/ies
$m\geq-\min_{j}\alpha_{i}$. This is valid since $\alpha+m\boldsymbol{1}
\in\mathbb{N}_{0}^{N}$ and by the relation $\zeta_{\beta+k\boldsymbol{1}
,T}=e_{N}^{k}\zeta_{\beta,T}$ for $\beta\in\mathbb{N}_{0}^{N}$ and
$k\in\mathbb{N}_{0}$.
\end{Definition}

The proof that the form satisf\/ies the properties (\ref{admforms}) with respect
to steps is the same as the one in \cite[Propositions~8 and~9]{Dunkl2010}, and it
suf\/f\/ices to verify the ef\/fect of a~jump.

\begin{Theorem}\label{norm4jump} Suppose $\alpha\in\mathbb{N}_{0}^{N,+}$ then $ \langle
\zeta_{\Phi\alpha,T},\zeta_{\Phi\alpha,T} \rangle _{\mathbb{T}
}= \langle \zeta_{\alpha,T},\zeta_{\alpha,T} \rangle _{\mathbb{T}}$.
\end{Theorem}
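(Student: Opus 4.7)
The plan is to verify the stated norm identity by direct computation from Definition~\ref{Tformdef}, exploiting the fact that $\alpha$ is a partition so that $r_\alpha = I$ and $(\Phi\alpha)^+$ is obtained from $\alpha$ by simply incrementing the first entry.

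First I would compute $(\Phi\alpha)^+$. Since $\alpha_1\ge\alpha_2\ge\cdots\ge\alpha_N$, the entry $(\Phi\alpha)_N=\alpha_1+1$ is strictly larger than all other entries of $\Phi\alpha=(\alpha_2,\ldots,\alpha_N,\alpha_1+1)$, so $(\Phi\alpha)^+ = (\alpha_1+1,\alpha_2,\ldots,\alpha_N)$. Then I would apply the partition formula in Definition~\ref{Tformdef} to obtain
\begin{gather*}
\frac{\langle \zeta_{(\Phi\alpha)^+,T},\zeta_{(\Phi\alpha)^+,T}\rangle_{\mathbb{T}}}
{\langle \zeta_{\alpha,T},\zeta_{\alpha,T}\rangle_{\mathbb{T}}}
=\prod_{j=2}^N\left(1-\left(\frac{\kappa}{\alpha_1-\alpha_j+1+\kappa(c(1,T)-c(j,T))}\right)^{2}\right),
\end{gather*}
since the only index pairs for which the inner $\ell$-product changes are those with $i=1$, and exactly one new factor (at $\ell=\alpha_1-\alpha_j+1$) is introduced in each.

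Next I would compute $\mathcal{E}_1(\Phi\alpha,T)\mathcal{E}_{-1}(\Phi\alpha,T)$. Since $r_\alpha=I$, we have $r_{\Phi\alpha}=w_0$, so $r_{\Phi\alpha}(i)=i+1$ for $i<N$ and $r_{\Phi\alpha}(N)=1$. The inequality $(\Phi\alpha)_i<(\Phi\alpha)_j$ with $i<j$ occurs in (\ref{EEform}) only for $j=N$, $1\le i\le N-1$, because $\alpha$ is weakly decreasing. Substituting and reindexing $k=i+1$ gives
\begin{gather*}
\mathcal{E}_\varepsilon(\Phi\alpha,T)=\prod_{k=2}^N\left(1+\frac{\varepsilon\kappa}{\alpha_1-\alpha_k+1+\kappa(c(1,T)-c(k,T))}\right),
\end{gather*}
whence $\mathcal{E}_1(\Phi\alpha,T)\mathcal{E}_{-1}(\Phi\alpha,T)$ equals the right-hand side of the previous displayed equation. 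Finally, applying the third clause of Definition~\ref{Tformdef} to $\Phi\alpha$,
\begin{gather*}
\langle\zeta_{\Phi\alpha,T},\zeta_{\Phi\alpha,T}\rangle_{\mathbb{T}}
=\bigl(\mathcal{E}_1(\Phi\alpha,T)\mathcal{E}_{-1}(\Phi\alpha,T)\bigr)^{-1}\langle\zeta_{(\Phi\alpha)^+,T},\zeta_{(\Phi\alpha)^+,T}\rangle_{\mathbb{T}},
\end{gather*}
the two products cancel and the ratio collapses to $1$.

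The computation is essentially routine; the only delicate point is making sure the rank function $r_{\Phi\alpha}=w_0$ is used correctly so that the content differences $c(1,T)-c(k,T)$ appearing in the $\mathcal{E}$-product line up with the content differences $c(1,T)-c(j,T)$ appearing in the $\ell$-product from the partition formula. This is precisely why the definition of $\langle\cdot,\cdot\rangle_{\mathbb{T}}$ omits the factor $\prod_i(1+\kappa c(i,T))_{\lambda_i}$ present in the covariant form: its absence is exactly what makes the jump ratio trivial, as indicated in the motivation preceding Definition~\ref{Tformdef}.
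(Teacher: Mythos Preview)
Your proof is correct and takes essentially the same approach as the paper: identify $(\Phi\alpha)^+=(\alpha_1+1,\alpha_2,\ldots,\alpha_N)$, compute $\mathcal{E}_{\pm1}(\Phi\alpha,T)$ using $r_{\Phi\alpha}=w_0$ (so the only contributing pairs are $(i,N)$), and observe that the product $\mathcal{E}_1\mathcal{E}_{-1}$ coincides exactly with the extra $\ell=\alpha_1-\alpha_j+1$ factors appearing in the partition norm for $(\Phi\alpha)^+$ versus $\alpha$. The paper writes out the full expression and cancels the top $\ell$-term in each $i=1$ product, while you phrase it as a ratio, but the computation is the same.
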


\begin{proof}
We use (\ref{zjump}) to relate $ \langle \zeta_{\Phi\alpha,T},\zeta
_{\Phi\alpha,T} \rangle _{\mathbb{T}}$ to $ \langle \zeta_{\beta
,T},\zeta_{\beta,T} \rangle _{\mathbb{T}}$ where $\beta= (
\Phi\alpha ) ^{+}= ( \alpha_{1}+1,\alpha_{2}$, $\ldots,\alpha
_{N} )$. The product $\mathcal{E}_{\varepsilon} ( \Phi
\alpha,T ) $ is over the pairs $\alpha_{i}= ( \Phi\alpha )
_{i-1}< ( \Phi\alpha ) _{N}=\alpha_{1}+1$ for $2\leq i\leq N$, thus
\begin{gather*}
\mathcal{E}_{\varepsilon} ( \Phi\alpha,T ) =\prod
\limits_{i=2}^{N}\left( 1-\frac{\varepsilon\kappa}{\alpha_{1}+1-\alpha
_{i}+\kappa ( c ( r_{\Phi\alpha}(N) ,T )
-c ( r_{\Phi\alpha} ( i-1 ) ,T ) ) } \right) \\
\hphantom{\mathcal{E}_{\varepsilon} ( \Phi\alpha,T )}{}
 =\prod\limits_{i=2}^{N}\left( 1-\frac{\varepsilon\kappa}{\alpha
_{1}+1-\alpha_{i}+\kappa ( c ( 1,T ) -c(i,T)
 ) }\right) .
\end{gather*}
By def\/inition
\begin{gather*}
 \langle \zeta_{\Phi\alpha,T},\zeta_{\Phi\alpha,T} \rangle
_{\mathbb{T}} =\big( \mathcal{E}_{1} ( \Phi a,T )
\mathcal{E}_{-1} ( \Phi\alpha,T ) \big) ^{-1} \langle
\zeta_{\beta,T},\zeta_{\beta,T} \rangle _{\mathbb{T}}\\
\hphantom{\langle \zeta_{\Phi\alpha,T},\zeta_{\Phi\alpha,T} \rangle _{\mathbb{T}} }{}
 =\prod\limits_{i=2}^{N}\left( 1-\left( \frac{\kappa}{\alpha_{1}
+1-\alpha_{i}+\kappa ( c ( 1,T ) -c(i,T)
 ) } \right) ^{2}\right) ^{-1}\\
\hphantom{\langle \zeta_{\Phi\alpha,T},\zeta_{\Phi\alpha,T} \rangle _{\mathbb{T}} =}{}
 \times \langle T,T \rangle _{0}\prod_{1\leq i<j\leq N}\prod
_{\ell=1}^{\beta_{i}-\beta_{j}}\left( 1-\left( \frac{\kappa}{\ell
+\kappa ( c(i,T) -c(j,T)) }\right)
^{2}\right) \\
\hphantom{\langle \zeta_{\Phi\alpha,T},\zeta_{\Phi\alpha,T} \rangle _{\mathbb{T}} }{}
 = \langle T,T \rangle _{0}\prod_{2\leq i<j\leq N}\prod_{\ell
=1}^{\alpha_{i}-\alpha_{j}}\left( 1-\left( \frac{\kappa}{\ell+\kappa (
c(i,T) -c(j,T) ) }\right) ^{2}\right) \\
\hphantom{\langle \zeta_{\Phi\alpha,T},\zeta_{\Phi\alpha,T} \rangle _{\mathbb{T}} =}{}
 \times\prod\limits_{j=2}^{N}\prod_{\ell=1}^{\alpha_{1}-\alpha_{j}}\left(
1-\left( \frac{\kappa}{\ell+\kappa( c( 1,T) -c(
j,T)) }\right) ^{2}\right)
 =\langle \zeta_{\alpha,T},\zeta_{\alpha,T}\rangle _{\mathbb{T}}.
\end{gather*}
The terms in the product for $i=1$ and $2\leq j\leq N$, $\ell=\beta_{1}
-\beta_{j}=\alpha_{1}+1-\alpha_{j}$ are canceled out.
\end{proof}

We summarize the key results. We say $\kappa$ is \textit{generic} if $(
\alpha,T) \neq( \beta,T^{\prime}) $ implies the spectral
vectors $\xi_{\alpha,T}\neq\xi_{\beta,T^{\prime}}$.

\begin{Proposition}
\label{torusform}For generic $\kappa$ the Hermitian form $ \langle
\cdot,\cdot \rangle _{\mathbb{T}}$ satisfies
\begin{enumerate}\itemsep=0pt
\item[$1)$] if $f$, $g$ are
homogeneous and $\deg f\neq\deg g$ then $\langle f,g\rangle
_{\mathbb{T}}=0$,
\item[$2)$] $\langle wf,wg\rangle _{\mathbb{T}
}=\langle f,g\rangle _{\mathbb{T}}$, $f,g\in\mathcal{P}_{\tau}$, $w\in\mathcal{S}_{N}$,
\item[$3)$] $ \langle x_{i}\mathcal{D}_{i}f,g \rangle _{\mathbb{T}}= \langle f,x_{i}\mathcal{D}_{i}
g \rangle _{\mathbb{T}}$ for $f,g\in\mathcal{P}_{\tau}$ and $1\leq i\leq
N$,
\item[$4)$] $\langle x_{i}f,x_{i}g\rangle _{\mathbb{T}}= \langle f,g \rangle _{\mathbb{T}}$ for $f,g\in\mathcal{P}_{\tau}$
and $1\leq i\leq N$.
\end{enumerate}
\end{Proposition}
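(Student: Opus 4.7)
The plan is to verify properties~(1)--(4) in order, using Definition~\ref{Tformdef}, the step analysis of the Yang--Baxter graph, and Theorem~\ref{norm4jump}.

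Property~(1) is immediate: each $\zeta_{\alpha,T}$ is homogeneous of degree $|\alpha|$, so two polynomials of different degrees have disjoint expansions in the basis $\{\zeta_{\alpha,T}\}$, and the orthogonality built into Definition~\ref{Tformdef} forces $\langle f,g\rangle_{\mathbb{T}}=0$. For~(2) it suffices to check $\langle s_i f,s_i g\rangle_{\mathbb{T}}=\langle f,g\rangle_{\mathbb{T}}$ on basis pairs. Per Section~\ref{section2}, the action of $s_i$ on $\zeta_{\alpha,T}$ falls into four cases: $\pm\zeta_{\alpha,T}$ when $\alpha_i=\alpha_{i+1}$ with matching rows or columns of $T$; a tableau swap $\zeta_{\alpha,T^{(j)}}+b'\zeta_{\alpha,T}$ in the type~(3) configuration; or a composition swap $\zeta_{s_i\alpha,T}+b'\zeta_{\alpha,T}$ when $\alpha_i\ne\alpha_{i+1}$. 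Definition~\ref{Tformdef} is arranged so that the norm identity $\|\zeta_{\mathrm{new}}\|_{\mathbb{T}}^2=(1-b'^2)\|\zeta_{\alpha,T}\|_{\mathbb{T}}^2$ holds in each case, and the verification of invariance is then essentially that of~\cite[Propositions~8 and~9]{Dunkl2010}. Property~(3) follows from~(2) via $x_i\mathcal{D}_i=\mathcal{U}_i-1-\kappa\sum_{j>i}(i,j)$: the transpositions are self-adjoint by~(2), and each $\mathcal{U}_i$ is self-adjoint because $\{\zeta_{\alpha,T}\}$ is an orthogonal eigenbasis with real eigenvalues $\xi_{\alpha,T}(i)$; subtracting yields self-adjointness of $x_i\mathcal{D}_i$.

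The main new content is~(4). By~(2) it suffices to treat $i=N$, since for any $w\in\mathcal{S}_N$ with $w(N)=i$ the multiplication operator by $x_i$ is the conjugate $w\circ(x_N\,\cdot\,)\circ w^{-1}$ of multiplication by $x_N$ by an isometry. For $x_N$ itself the useful description comes from the alternate basis $\{w_0^{-1}\zeta_{\alpha,T}\colon\alpha\in\mathbb{N}_0^N,\,T\in\mathcal{Y}(\tau)\}$, which is $\langle\cdot,\cdot\rangle_{\mathbb{T}}$-orthogonal by~(2) with the same diagonal norms as $\{\zeta_{\alpha,T}\}$. The jump formula~\eqref{zjump} gives $x_N(w_0^{-1}\zeta_{\alpha,T})=\zeta_{\Phi\alpha,T}$, and since $\Phi$ is injective on $\mathbb{N}_0^N$ these images again form an orthogonal family. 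Hence isometry of multiplication by $x_N$ reduces to the single norm identity
\begin{equation*}
\|\zeta_{\Phi\alpha,T}\|_{\mathbb{T}}^2=\|\zeta_{\alpha,T}\|_{\mathbb{T}}^2\qquad\text{for all } \alpha\in\mathbb{N}_0^N,\ T\in\mathcal{Y}(\tau).
\end{equation*}

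The main obstacle is extending this identity beyond the partition case, which is all that Theorem~\ref{norm4jump} directly supplies. I would proceed by a combined step/jump induction. First, the braid-type commutation $\Phi s_i=s_{i-1}\Phi$ (valid for $i\geq 2$), together with the equality $r_{\Phi\alpha}(i-1)=r_\alpha(i)$, forces the $b'$-factor at $(\Phi\alpha,i-1)$ to coincide with that at $(\alpha,i)$, so the ratio $\|\zeta_{\Phi\alpha,T}\|_{\mathbb{T}}^2/\|\zeta_{\alpha,T}\|_{\mathbb{T}}^2$ is preserved by any ascending $s_i$-move with $i\geq 2$; it is likewise preserved under $\alpha\mapsto\alpha+\mathbf{1}$. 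The cyclic identity $\Phi^N\alpha=\alpha+\mathbf{1}$ combined with the $m\mathbf{1}$-invariance noted just after Definition~\ref{Tformdef} collapses the product of these ratios around any full $\Phi$-orbit to~$1$. The delicate case is an $\alpha$ whose only inversions lie at position~$1$, where the $s_i$ ($i\geq 2$) reductions leave one stuck at a non-partition canonical form; there the residual combinatorial identity between the $\mathcal{E}_1\mathcal{E}_{-1}$ factors for $\alpha$ and $\Phi\alpha$ must be checked directly against the partition norms $\|\zeta_{\alpha^+,T}\|_{\mathbb{T}}^2$ and $\|\zeta_{(\Phi\alpha)^+,T}\|_{\mathbb{T}}^2$ supplied by Definition~\ref{Tformdef} in order to pin each single-jump ratio to~$1$.
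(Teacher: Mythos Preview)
Your treatment of (1)--(3) matches the paper's: degree orthogonality from the NSJP basis, $s_i$-invariance via the step relations (citing \cite[Propositions~8 and~9]{Dunkl2010}), and self-adjointness of $x_i\mathcal D_i$ from that of $\mathcal U_i$ together with the commutation $\mathcal U_i=x_i\mathcal D_i+1+\kappa\sum_{j>i}(i,j)$.

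For (4) you follow the paper's route exactly---reduce to $i=N$ by conjugation, pass to the orthogonal basis $\{w_0^{-1}\zeta_{\alpha,T}\}$, and invoke the jump identity $\|\zeta_{\Phi\alpha,T}\|_{\mathbb T}^2=\|\zeta_{\alpha,T}\|_{\mathbb T}^2$---and you are right that Theorem~\ref{norm4jump} as stated covers only partitions $\alpha$, whereas the basis argument needs it for every $\alpha\in\mathbb N_0^N$. The paper simply cites Theorem~\ref{norm4jump} at this point without comment. Your attempted extension, however, does not close: the commutation $\Phi s_i=s_{i-1}\Phi$ for $i\ge2$ indeed shows the ratio $R(\alpha)=\|\zeta_{\Phi\alpha,T}\|_{\mathbb T}^2/\|\zeta_{\alpha,T}\|_{\mathbb T}^2$ is invariant under sorting positions $2,\dots,N$, leaving the residual family $\alpha=(a,\mu_1,\dots,\mu_{N-1})$ with $\mu$ a partition and $a<\mu_1$; the $\Phi^N\alpha=\alpha+\mathbf 1$ observation only pins the \emph{product} $R(\alpha)R(\Phi\alpha)\cdots R(\Phi^{N-1}\alpha)$ to~$1$, not each factor. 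So the ``delicate case'' you flag is a genuine gap in your write-up.

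The clean fix is not the direct $\mathcal E_1\mathcal E_{-1}$ computation you sketch but the comparison with the covariant form that the paper sets up immediately before Definition~\ref{Tformdef}. By \cite[Appendix, Corollary~5, Theorem~10]{Dunkl2010} the covariant jump ratio is $\xi_{\alpha,T}(1)=\alpha_1+1+\kappa\,c(r_\alpha(1),T)$ for \emph{all} $\alpha$. The torus and covariant norms differ only by the factor $\prod_i(1+\kappa c(i,T))_{\alpha^+_i}$, and since $(\Phi\alpha)^+$ agrees with $\alpha^+$ except at the single position $k=r_\alpha(1)$ where it is raised by~$1$, that Pochhammer block changes by exactly $\xi_{\alpha,T}(1)$ as well. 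The two factors cancel, giving $R(\alpha)=1$ for every $\alpha$. This is the content the paper is implicitly leaning on when it applies Theorem~\ref{norm4jump} without restriction; Theorem~\ref{norm4jump} itself is then just the explicit partition-case verification.
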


\begin{proof}
For generic $\kappa$ the NSJP's $\zeta_{\alpha,T}$ with $\vert
\alpha \vert =n$ form a basis for $\mathcal{P}_{\tau,n}$; this
immediately implies~(1). For (2) the fact that $ \langle s_{i}
\zeta_{\alpha,T},s_{i}\zeta_{\beta,T^{\prime}} \rangle _{\mathbb{T}
}=\langle \zeta_{\alpha,T},\zeta_{\beta,T^{\prime}}\rangle
_{\mathbb{T}}$ for $1\leq i<N$ follows from the corresponding results in
\cite[Propositions~8 and~9, Corollary~3]{Dunkl2010} when $\vert\alpha\vert
= \vert \beta \vert $, otherwise from Def\/inition~\ref{Tformdef}. This
suf\/f\/ices for~(2) since $ \{ s_{i} \} $ generates~$\mathcal{S}_{N}$.
The def\/inition of NSJP's implies trivially that $\langle \mathcal{U}
_{i}\zeta_{\alpha,T},\zeta_{\beta,T^{\prime}}\rangle _{\mathbb{T}
}=\langle \zeta_{\alpha,T},\mathcal{U}_{i}\zeta_{\beta,T^{\prime}
}\rangle _{\mathbb{T}}$ for all $i$ and $(\alpha,T)$, $( \beta,T^{\prime}) $ because both sides vanish if $(
\alpha,T) \neq( \beta,T^{\prime}) $, otherwise equal
$\xi_{\alpha,T}(i) \langle \zeta_{\alpha,T},\zeta
_{\alpha,T}\rangle _{\mathbb{T}}$. The commutation $\mathcal{U}
_{i}=\mathcal{D}_{i}x_{i}-\kappa\sum\limits_{j<i}(i,j) =x_{i}
\mathcal{D}_{i}+1+\kappa\sum\limits_{j>i}(i,j) $ together with
$ \langle (i,j) f,g \rangle _{\mathbb{T}}= \langle
f,(i,j) g \rangle _{\mathbb{T}}$ from~(2) show that
$ \langle x_{i}\mathcal{D}_{i}f,g \rangle _{\mathbb{T}}= \langle
f,x_{i}\mathcal{D}_{i}g \rangle _{\mathbb{T}}$ for $1\leq i\leq N$. For
part (4) (recall $w_{0}= ( 123\ldots N ) $) $\zeta_{\Phi\alpha
,T}=x_{N}w_{0}^{-1}\zeta_{\alpha,T}$ and $\zeta_{\Phi\beta,T^{\prime}}
=x_{N}w_{0}^{-1}\zeta_{\beta,T^{\prime}}$. By Theorem~\ref{norm4jump}
$\langle \zeta_{\Phi\alpha,T},\zeta_{\Phi\beta,T^{\prime}}\rangle
_{\mathbb{T}}=\langle \zeta_{\alpha,T},\zeta_{\beta,T^{\prime}
}\rangle _{\mathbb{T}}$ (if $(\alpha,T) \neq(
\beta,T^{\prime}) $ then $( \Phi\alpha,T) \neq(
\Phi\beta,T^{\prime}) $). Thus for each $(\alpha,T)$, $(\beta,T^{\prime})$
\begin{gather*}
\big\langle x_{N}\big( w_{0}^{-1}\zeta_{\alpha,T}\big) ,x_{N}\big(
w_{0}^{-1}\zeta_{\beta,T^{\prime}}\big) \big\rangle _{\mathbb{T}
}= \langle \zeta_{\alpha,T},\zeta_{\beta,T^{\prime}} \rangle
_{\mathbb{T}}= \big\langle w_{0}^{-1}\zeta_{\alpha,T},w_{0}^{-1}\zeta
_{\beta,T^{\prime}} \big\rangle _{\mathbb{T}}.
\end{gather*}
The set $\big\{ w_{0}^{-1}\zeta_{\alpha,T}:(\alpha,T)
\big\} $ is a basis for $\mathcal{P}_{\tau}$ thus $\langle
x_{N}f,x_{N}g\rangle _{\mathbb{T}}=\langle f,g\rangle
_{\mathbb{T}}$ for all $f,g\in\mathcal{P}_{\tau}$. For any $i$
\begin{align*}
 \langle x_{i}f,x_{i}g \rangle _{\mathbb{T}} & = \langle
 ( i,N ) x_{i}f,(i,N) x_{i}g \rangle
_{\mathbb{T}}= \langle x_{N}(i,N) f,x_{N}(i,N)
g \rangle _{\mathbb{T}}\\
& = \langle (i,N) f,(i,N) g \rangle
_{\mathbb{T}}= \langle f,g \rangle _{\mathbb{T}};
\end{align*}
and this completes the proof.
\end{proof}

This lays the abstract foundation for the next developments.

\section{Fourier--Stieltjes coef\/f\/icients on the torus}\label{section4}

The torus $\mathbb{T}^{N}:=\big\{ x\in\mathbb{C}^{N}\colon \vert
x_{i}\vert =1,\, 1\leq i\leq N\big\} $ is a multiplicative compact
abelian group with dual group $\mathbb{Z}^{N}$. We will use this property to
f\/ind the measure of orthogonality for the NSJP's on the torus. First we
produce the Fourier--Stieltjes coef\/f\/icients of the hypothetical measure and
then use a matrix version of a theorem of Bochner to deduce the existence of
the measure.

When $\kappa$ is generic the NSJP's form a basis for $\mathcal{P}_{\tau}$ and
it is possible to make the def\/inition
\begin{gather*}
\widetilde{A}\big( \alpha,\beta,T,T^{\prime}\big) :=\big( \langle
T,T \rangle _{0}\big\langle T^{\prime},T^{\prime}\big\rangle
_{0}\big) ^{-1/2}\big\langle x^{\alpha}\otimes T,x^{\beta}\otimes
T^{\prime}\big\rangle _{\mathbb{T}}
\end{gather*}
for $\alpha,\beta\in\mathbb{N}_{0}^{N}$ and $T,T^{\prime}\in\mathcal{Y} (
\tau) $. In ef\/fect this uses the orthonormal basis of $V_{\tau}$. By
the symmetry of the form $\widetilde{A} ( \alpha,\beta,T,T^{\prime
} ) =\widetilde{A} ( \beta,\alpha,T^{\prime},T)$. By
Proposition~\ref{torusform} $\vert \alpha\vert \neq\vert
\beta\vert $ implies $\widetilde{A}( \alpha,\beta,T,T^{\prime
}) =0$. Another consequence is $\widetilde{A} (\mathbf{0}
,\mathbf{0},T,T^{\prime}) =\delta_{T,T^{\prime}}$.

\begin{Definition}
For each $\gamma\in\mathbb{Z}^{N}$ with $\sum_{i=1}^{N}\gamma_{i}=0$ let
$\gamma_{i}^{\pi}=\max( \gamma_{i},0) $ and $\gamma_{i}^{\nu
}=-\min( \gamma_{i},0) $ for $1\leq i\leq N$; then $\gamma
=\gamma^{\pi}-\gamma^{\nu}$ and $\gamma^{\pi},\gamma^{\nu}\in\mathbb{N}
_{0}^{N}$. Furthermore $\vert \gamma^{\pi}\vert =\vert
\gamma^{\nu}\vert $ and $\sum_{i}\vert \gamma_{i}\vert
=\vert \gamma^{\pi}\vert +\vert \gamma^{\nu}\vert $ is even.
\end{Definition}

Introduce the index set $\boldsymbol{Z}_{N}$ and its graded components by
\begin{gather*}
\boldsymbol{Z}_{N} :=\left\{ \alpha\in\mathbb{Z}^{N}\colon \sum_{i=1}^{N} \alpha_{i}=0\right\}, \\
\boldsymbol{Z}_{N,n} :=\left\{ \alpha\in\boldsymbol{Z}_{N}\colon \sum_{i=1}^{N} \vert \alpha_{i} \vert =2n\right\} ,\qquad n=0,1,2,\ldots.
\end{gather*}
Formula~\eqref{ZNnct} for $\#\boldsymbol{Z}_{N,n}$ is in Appendix~\ref{appendixA}.

\begin{Definition}
For $\gamma\in\mathbb{Z}^{N}$ the matrix $A_{\gamma}$ (of size $\#\mathcal{Y}
(\tau) \times\#\mathcal{Y}(\tau) $) is given by
\begin{gather*}
 ( A_{\gamma}) _{T,T^{\prime}} =\widetilde{A}\big(
\gamma^{\pi},\gamma^{\nu},T,T^{\prime}\big) , \qquad T,T^{\prime}\in
\mathcal{Y}(\tau) , \qquad \gamma\in\boldsymbol{Z}_{N},\\
A_{\gamma} =0, \qquad \gamma\notin\boldsymbol{Z}_{N}.
\end{gather*}
\end{Definition}

\begin{Proposition}
Suppose $\alpha,\beta\in\mathbb{N}_{0}^{N}$ and $T,T^{\prime}\in
\mathcal{Y}(\tau) $ then $\widetilde{A} ( \alpha
,\beta,T,T^{\prime} ) = ( A_{\alpha-\beta} ) _{T,T^{\prime}}$.
\end{Proposition}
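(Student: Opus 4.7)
The plan is to split into two cases based on whether $\alpha-\beta$ lies in $\boldsymbol{Z}_N$, i.e., whether $|\alpha|=|\beta|$, and in each case exploit one of the defining properties of $\langle\cdot,\cdot\rangle_{\mathbb{T}}$ established in Proposition~\ref{torusform}.

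First, suppose $|\alpha|\neq|\beta|$. Then $x^{\alpha}\otimes T$ and $x^{\beta}\otimes T'$ are homogeneous of distinct degrees, so by Proposition~\ref{torusform}(1) their pairing vanishes, giving $\widetilde{A}(\alpha,\beta,T,T')=0$. On the other side, $\sum_i(\alpha_i-\beta_i)=|\alpha|-|\beta|\neq 0$, so $\alpha-\beta\notin\boldsymbol{Z}_N$ and $A_{\alpha-\beta}=0$ by definition. Both sides are zero, matching.

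Next, suppose $|\alpha|=|\beta|$, so that $\gamma:=\alpha-\beta\in\boldsymbol{Z}_N$. The key observation is the decomposition $\alpha=\mu+\gamma^{\pi}$ and $\beta=\mu+\gamma^{\nu}$, where $\mu_i:=\min(\alpha_i,\beta_i)\in\mathbb{N}_0$; indeed $\alpha_i-\mu_i=\max(\alpha_i-\beta_i,0)=\gamma_i^{\pi}$ and symmetrically for $\beta$. Thus
\begin{gather*}
\big\langle x^{\alpha}\otimes T,\, x^{\beta}\otimes T'\big\rangle_{\mathbb{T}} = \big\langle x^{\mu}\big(x^{\gamma^{\pi}}\otimes T\big),\, x^{\mu}\big(x^{\gamma^{\nu}}\otimes T'\big)\big\rangle_{\mathbb{T}}.
\end{gather*}
Applying the isometry property $\langle x_if,x_ig\rangle_{\mathbb{T}}=\langle f,g\rangle_{\mathbb{T}}$ from Proposition~\ref{torusform}(4) exactly $\mu_i$ times for each $i$ (in any order, since the property holds coordinate by coordinate) peels off the common factor $x^{\mu}$ on both sides, yielding
\begin{gather*}
\big\langle x^{\alpha}\otimes T,\, x^{\beta}\otimes T'\big\rangle_{\mathbb{T}} = \big\langle x^{\gamma^{\pi}}\otimes T,\, x^{\gamma^{\nu}}\otimes T'\big\rangle_{\mathbb{T}}.
\end{gather*}
Dividing by the normalization $(\langle T,T\rangle_0\langle T',T'\rangle_0)^{1/2}$ produces
$\widetilde{A}(\alpha,\beta,T,T')=\widetilde{A}(\gamma^{\pi},\gamma^{\nu},T,T')=(A_{\gamma})_{T,T'}$, as required.

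There is no serious obstacle here: the statement is essentially a reformulation saying that, because $\langle\cdot,\cdot\rangle_{\mathbb{T}}$ is invariant under multiplication by any monomial $x^{\mu}$ in both arguments, its values on monomial pairs depend only on the difference $\alpha-\beta\in\mathbb{Z}^N$. The two ingredients needed---vanishing on different degrees and isometry under $x_i$---are both provided by Proposition~\ref{torusform}, so the proof reduces to the elementary bookkeeping of writing $\alpha$ and $\beta$ as $\mu+\gamma^{\pi}$ and $\mu+\gamma^{\nu}$.
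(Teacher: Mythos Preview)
Your proof is correct and follows essentially the same route as the paper: both split on whether $|\alpha|=|\beta|$, and in the equal-degree case both set $\mu_i=\min(\alpha_i,\beta_i)$ (the paper calls it $\zeta$), observe that $\alpha-\mu=(\alpha-\beta)^{\pi}$ and $\beta-\mu=(\alpha-\beta)^{\nu}$, and invoke Proposition~\ref{torusform}(4) to strip the common monomial factor. Your write-up is slightly more explicit about the bookkeeping, but the argument is the same.
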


\begin{proof}
If $\vert \alpha\vert \neq\vert \beta\vert $ then
$\sum\limits_{i=1}^{N} ( \alpha_{i}-\beta_{i} ) \neq0$, $\widetilde
{A} ( \alpha,\beta,T,T^{\prime} ) =0$ and $A_{\alpha-\beta}=0$ by
def\/inition. If $\vert \alpha\vert =\vert \beta\vert $
let $\zeta_{i}=\min( \alpha_{i},\beta_{i}) $ for $1\leq i\leq N$
then $x^{\zeta}$ is a factor of both $x^{\alpha}$ and $x^{\beta}$; by
Proposition~\ref{torusform} $\langle x^{\alpha}\otimes T,x^{\beta}\otimes
T^{\prime}\rangle _{\mathbb{T}}=\langle x^{\alpha-\zeta}\otimes
T,x^{\beta-\zeta}\otimes T^{\prime}\rangle _{\mathbb{T}}$ . By
construction $( \alpha-\beta) ^{\pi}=\alpha-\zeta,(
\alpha-\beta) ^{\nu}=\beta-\zeta$. It follows that
\begin{gather*}
\big\langle x^{\alpha}\otimes T,x^{\beta}\otimes T^{\prime}\big\rangle
=T^{\ast}A_{\alpha-\beta}T^{\prime}=\big( \langle T,T \rangle
_{0}\big\langle T^{\prime},T^{\prime}\big\rangle _{0}\big) ^{1/2} (
A_{\alpha-\beta} ) _{T,T^{\prime}}.\tag*{\qed}
\end{gather*}
\renewcommand{\qed}{}
\end{proof}

For a formal Laurent series $h(x) =\sum\limits_{\alpha
\in\mathbb{Z}^{N}}c_{\alpha}x^{\alpha}$ let $\operatorname{CT}( h(x)
) =c_{\mathbf{0}}$, the constant term. Then
\begin{gather*}
\big\langle x^{\alpha}\otimes T,x^{\beta}\otimes T^{\prime}\big\rangle
=\big( \langle T,T \rangle _{0}\big\langle T^{\prime},T^{\prime
}\big\rangle _{0}\big) ^{1/2} \operatorname{CT}\bigg( x^{-\alpha}\sum_{\gamma
\in\mathbb{Z}^{N}} ( A_{\gamma} ) _{T,T^{\prime}}x^{\gamma}%
x^{\beta}\bigg) .
\end{gather*}
In the next section we investigate analytical properties of the formal series,
but f\/irst we consider algebraic properties, that is, those not needing any
convergence results.

\begin{Theorem}
Suppose $\gamma\in\boldsymbol{Z}_{N}$ and $w\in\mathcal{S}_{N}$ then
$A_{-\gamma}=A_{\gamma}^{\ast}$ and $A_{w\gamma}=\tau(w)
A_{\gamma}\tau\big( w^{-1}\big) $.
\end{Theorem}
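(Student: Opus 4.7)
Both identities are equalities of matrices indexed by $\mathcal{Y}(\tau)$, so the plan is to verify them entrywise using the definitions
\[
 (A_\gamma)_{T,T'}=\widetilde A\!\left(\gamma^\pi,\gamma^\nu,T,T'\right)=\big(\langle T,T\rangle_0\langle T',T'\rangle_0\big)^{-1/2}\big\langle x^{\gamma^\pi}\!\otimes T,x^{\gamma^\nu}\!\otimes T'\big\rangle_{\mathbb T}
\]
together with the structural properties of $\langle\cdot,\cdot\rangle_{\mathbb T}$ from Proposition~\ref{torusform}. The case $\gamma\notin\boldsymbol Z_N$ is trivial in both parts since $w\gamma$ and $-\gamma$ also fail the zero-sum condition, so both sides vanish.

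For the first identity $A_{-\gamma}=A_\gamma^{\ast}$, I would begin by observing the elementary fact $(-\gamma)^\pi=\gamma^\nu$ and $(-\gamma)^\nu=\gamma^\pi$. Then
\[
 (A_{-\gamma})_{T,T'}=\big(\langle T,T\rangle_0\langle T',T'\rangle_0\big)^{-1/2}\big\langle x^{\gamma^\nu}\!\otimes T,x^{\gamma^\pi}\!\otimes T'\big\rangle_{\mathbb T},
\]
and the Hermitian symmetry $\langle u,v\rangle_{\mathbb T}=\overline{\langle v,u\rangle_{\mathbb T}}$ (which is built into the definition of the form) converts this into $\overline{(A_\gamma)_{T',T}}=(A_\gamma^\ast)_{T,T'}$. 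The normalization factor is unchanged under swapping $T\leftrightarrow T'$, which is what makes this go through cleanly.

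For the second identity $A_{w\gamma}=\tau(w)A_\gamma\tau(w^{-1})$, the key input is the $\mathcal S_N$-invariance $\langle wf,wg\rangle_{\mathbb T}=\langle f,g\rangle_{\mathbb T}$ from Proposition~\ref{torusform}(2), together with the compatibility $w(x^\alpha\otimes T)=x^{w\alpha}\otimes\tau(w)T$ and the elementary observation that coordinate permutation preserves positive and negative parts, i.e., $(w\gamma)^\pi=w\gamma^\pi$ and $(w\gamma)^\nu=w\gamma^\nu$. I would expand $\tau(w)T=\sum_S \tau(w)_{S,T}\,S$ in the basis $\mathcal Y(\tau)$ (with real coefficients, since $\tau$ is real orthogonal on the orthonormal basis $\{\langle T,T\rangle_0^{-1/2}T\}$ and the normalization factors work out to make $\tau(w)_{S,T}$ the same matrix entries that appear on the right-hand side), and similarly for $T'$. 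Applying $w$-invariance to $\big\langle x^{\gamma^\pi}\!\otimes T,x^{\gamma^\nu}\!\otimes T'\big\rangle_{\mathbb T}$ yields
\[
 \big\langle x^{w\gamma^\pi}\!\otimes\tau(w)T,\; x^{w\gamma^\nu}\!\otimes\tau(w)T'\big\rangle_{\mathbb T}=\sum_{S,S'}\tau(w)_{S,T}\,\tau(w)_{S',T'}\,\big\langle x^{w\gamma^\pi}\!\otimes S,x^{w\gamma^\nu}\!\otimes S'\big\rangle_{\mathbb T},
\]
where sesquilinearity collapses to bilinearity by reality of $\tau(w)_{S,T}$. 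Dividing through by the appropriate $(\langle T,T\rangle_0\langle T',T'\rangle_0)^{1/2}$ and recognizing the right-hand side as $(A_{w\gamma})_{S,S'}$ gives $A_\gamma=\tau(w)^{T}A_{w\gamma}\tau(w)=\tau(w^{-1})A_{w\gamma}\tau(w)$, which rearranges to the claim.

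The main obstacle I foresee is not conceptual but bookkeeping: keeping the normalization factors $\langle T,T\rangle_0^{-1/2}$ consistent so that the matrix $\tau(w)$ that appears in the conjugation is precisely the real orthogonal representation (in the orthonormal basis of $V_\tau$) rather than the matrix of $\tau(w)$ in the unnormalized basis $\mathcal Y(\tau)$. Once one commits to reading $A_\gamma$ as the Gram-type matrix in the orthonormal basis, the identification is automatic and the computation above closes.
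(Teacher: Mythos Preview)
Your proposal is correct and follows essentially the same approach as the paper. The paper's proof is slightly more compact: instead of expanding $\tau(w)T$ in the basis and summing, it writes the inner product as $T^{\ast}\tau(w)^{\ast}A_{w\alpha-w\beta}\tau(w)T'$ (using the convention $\langle x^\alpha\otimes T, x^\beta\otimes T'\rangle_{\mathbb T}=(\langle T,T\rangle_0\langle T',T'\rangle_0)^{1/2}T^{\ast}A_{\alpha-\beta}T'$), which absorbs your expansion into a single matrix product and sidesteps the normalization bookkeeping you flagged; but the underlying computation and inputs (Hermitian symmetry, $\mathcal S_N$-invariance, and the identifications $(-\gamma)^\pi=\gamma^\nu$, $(w\gamma)^\pi=w\gamma^\pi$) are identical.
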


\begin{proof}
The relation $\widetilde{A} ( \alpha,\beta,T,T^{\prime} )
=\widetilde{A} ( \beta,\alpha,T^{\prime},T ) $ shows $ (
A_{\alpha-\beta} ) _{T,T^{\prime}}= ( A_{\beta-\alpha} )
_{T^{\prime},T}$. By def\/i\-ni\-tion
\begin{gather*}
\big\langle w\big( x^{\alpha}\otimes T\big) ,w\big( x^{\beta}\otimes
T^{\prime}\big) \big\rangle _{\mathbb{T}}=\big\langle x^{w\alpha}%
\otimes\tau(w) T,x^{w\beta}\otimes\tau(w)
T^{\prime}\big\rangle _{\mathbb{T}}\\
\hphantom{\big\langle w\big( x^{\alpha}\otimes T\big) ,w\big( x^{\beta}\otimes
T^{\prime}\big) \big\rangle _{\mathbb{T}}}{}
=\big(\langle T,T\rangle _{0}\big\langle T^{\prime},T^{\prime
}\big\rangle _{0}\big) ^{1/2}T^{\ast}\tau(w) ^{\ast
}A_{w\alpha-w\beta}\tau(w) T^{\prime}\\
\hphantom{\big\langle w\big( x^{\alpha}\otimes T\big) ,w\big( x^{\beta}\otimes
T^{\prime}\big) \big\rangle _{\mathbb{T}}}{}
=\big\langle x^{\alpha}\otimes T,x^{\beta}\otimes T^{\prime}\big\rangle
_{\mathbb{T}}=\big(\langle T,T\rangle _{0}\big\langle
T^{\prime},T^{\prime}\big\rangle _{0}\big) ^{1/2}T^{\ast}A_{\alpha-\beta
}T^{\prime}
\end{gather*}
and thus $A_{\gamma}=\tau(w) ^{-1}A_{w\gamma}\tau (
w ) $ (recall $\tau$ is real-orthogonal so $\tau(w)
^{\ast}=\tau\big( w^{-1}\big) $).
\end{proof}

Summing over the graded components $\boldsymbol{Z}_{N,n}$ produces Laurent
polynomials with good properties, such as analyticity in $(
\mathbb{C}\backslash \{ 0 \} ) ^{N}$. The maps $a\mapsto
w\alpha$ ($w\in\mathcal{S}_{N}$) and $\alpha\mapsto-\alpha$ act as
permutations on each $\boldsymbol{Z}_{N,n}$.

\begin{Definition}
For $n=0,1,2,\ldots$ let
\begin{gather*}
H_{n}(x) :=\sum_{\alpha\in\boldsymbol{Z}_{N,n}}A_{\alpha
}x^{\alpha},
\end{gather*}
a Laurent polynomial with matrix coef\/f\/icients.
\end{Definition}

For complex Laurent polynomials $f(x) =\sum\limits_{\alpha
\in\mathbb{Z}^{N}}c_{\alpha}x^{\alpha}$ (f\/inite sum) def\/ine $f(
x) ^{\ast}=\sum\limits_{\alpha\in\mathbb{Z}^{N}}\overline{c_{\alpha}
}x^{-\alpha}$; if the coef\/f\/icients $\{ c_{\alpha}\} $ are
matrices then $f(x) ^{\ast}=\sum\limits_{\alpha\in\mathbb{Z}
^{N}}c_{\alpha}^{\ast}x^{-\alpha}$. There is a slight abuse of notation here:
if $x\in\mathbb{T}^{N}$ then $\overline{( x^{\alpha})
}=x^{-\alpha}$ and $f(x) ^{\ast}$ agrees with the adjoint of the
matrix $f(x)$.

\begin{Proposition}\label{Hprops} Suppose $n=0,1,2,\ldots$ and $w\in\mathcal{S}_{N}$ then
$H_{n}(xw) = \tau(w) ^{-1}H_{n} (x) \tau(w) $ and $H_{n}(x) ^{\ast}=H_{n}(x)$.
\end{Proposition}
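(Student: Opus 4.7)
The plan is to derive both identities directly from the transformation rules for the matrices $A_\gamma$ established in the previous theorem, namely $A_{-\gamma}=A_\gamma^*$ and $A_{w\gamma}=\tau(w)A_\gamma\tau(w^{-1})$, combined with the two obvious bijections of the index set $\boldsymbol{Z}_{N,n}$: the $\mathcal{S}_N$-action $\alpha\mapsto w\alpha$ and the involution $\alpha\mapsto -\alpha$. Both preserve $\sum_i\alpha_i=0$ and $\sum_i|\alpha_i|=2n$, so they permute $\boldsymbol{Z}_{N,n}$. No convergence issues arise because each $H_n$ is a finite sum.

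First I would handle the covariance identity. Using $(xw)^\alpha=x^{w\alpha}$ and the substitution $\beta=w\alpha$, rewrite
\begin{gather*}
H_n(xw)=\sum_{\alpha\in\boldsymbol{Z}_{N,n}}A_\alpha x^{w\alpha}
=\sum_{\beta\in\boldsymbol{Z}_{N,n}}A_{w^{-1}\beta}x^\beta.
\end{gather*}
Then apply the previous theorem with $\gamma=w^{-1}\beta$ to get $A_{w^{-1}\beta}=\tau(w^{-1})A_\beta\tau(w)=\tau(w)^{-1}A_\beta\tau(w)$, and pull the constant matrices outside the sum, yielding $H_n(xw)=\tau(w)^{-1}H_n(x)\tau(w)$.

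Next I would handle the self-adjointness. By definition of the $\ast$-operation on Laurent polynomials with matrix coefficients,
\begin{gather*}
H_n(x)^\ast=\sum_{\alpha\in\boldsymbol{Z}_{N,n}}A_\alpha^\ast x^{-\alpha}
=\sum_{\beta\in\boldsymbol{Z}_{N,n}}A_{-\beta}^\ast x^\beta,
\end{gather*}
where $\beta=-\alpha$. The previous theorem gives $A_{-\beta}=A_\beta^\ast$, hence $A_{-\beta}^\ast=A_\beta$, and the sum collapses to $H_n(x)$.

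There is no real obstacle: the work was already done in proving $A_{-\gamma}=A_\gamma^\ast$ and $A_{w\gamma}=\tau(w)A_\gamma\tau(w^{-1})$. The only point that deserves explicit mention is the verification that $\alpha\mapsto w\alpha$ and $\alpha\mapsto-\alpha$ actually preserve $\boldsymbol{Z}_{N,n}$, which is immediate from the definitions of $\boldsymbol{Z}_N$ and its grading. The proposition is then essentially a bookkeeping consequence of the matrix-level identities.
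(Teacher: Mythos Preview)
Your proof is correct and follows essentially the same approach as the paper's: reindex the sum via the bijections $\alpha\mapsto w\alpha$ and $\alpha\mapsto-\alpha$ of $\boldsymbol{Z}_{N,n}$ and invoke the identities $A_{w\gamma}=\tau(w)A_\gamma\tau(w^{-1})$ and $A_{-\gamma}=A_\gamma^\ast$. The paper's argument is slightly terser but identical in substance.
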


\begin{proof}
Compute%
\begin{align*}
H_{n}(xw) & =\sum_{\alpha\in\boldsymbol{Z}_{N,n}}A_{\alpha
}(wx) ^{\alpha}=\sum_{\alpha\in\boldsymbol{Z}_{N,n}}A_{\alpha
}x^{w\alpha}=\sum_{\beta\in\boldsymbol{Z}_{N,n}}A_{w^{-1}\beta}x^{\beta}\\
& =\tau\big( w^{-1}\big) \sum_{\beta\in\boldsymbol{Z}_{N,n}}A_{\beta
}x^{\beta}\tau(w) =\tau(w) ^{-1}H_{n}(x) \tau(w) .
\end{align*}
Also $H_{n}(x) ^{\ast}=\sum\limits_{\alpha\in\boldsymbol{Z}%
_{N,n}}A_{\alpha}^{\ast}x^{-\alpha}=\sum\limits_{\alpha\in\boldsymbol{Z}%
_{N,n}}A_{-\alpha}x^{-\alpha}=H_{n}(x) $.
\end{proof}

As a consequence we f\/ind an important commutation satisf\/ied by a particular
point value of~$H_{n}(x) $ (recall the $N$-cycle $w_{0}=(
1,2,\ldots,N)$).

\begin{Corollary}
Suppose $n=1,2,3,\ldots$ then $\tau(w_{0}) ^{-1}H_{n} (x_{0}) \tau(w_{0}) =H_{n}(x_{0}) $, where
$x_{0}=\big( 1,\omega,\ldots,\omega^{N-1}\big) $, $\omega=\exp\frac
{2\pi\mathrm{i}}{N}$.
\end{Corollary}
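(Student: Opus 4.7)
The plan is to apply Proposition \ref{Hprops} with $w = w_0$ and to observe that the point $x_0$ is essentially fixed (up to an overall scalar) by the coordinate permutation action of $w_0$, then use the defining constraint $\sum_i \alpha_i = 0$ on $\alpha \in \boldsymbol{Z}_{N,n}$ to absorb that scalar.

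First I would compute the vector $x_0 w_0$. Writing $w_0 = (1,2,\ldots,N)$ so that $w_0(i) = i+1$ for $1 \leq i \leq N-1$ and $w_0(N) = 1$, and using $(xw)_i = x_{w(i)}$, I get
\begin{gather*}
(x_0 w_0)_i = (x_0)_{w_0(i)} = \omega^{i} \quad (1 \leq i \leq N-1), \qquad (x_0 w_0)_N = (x_0)_1 = 1 = \omega^{N}.
\end{gather*}
Hence $x_0 w_0 = (\omega, \omega^2, \ldots, \omega^{N-1}, \omega^N) = \omega \, x_0$.

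Next I would exploit the zero-sum condition. For any $\alpha \in \boldsymbol{Z}_{N,n}$ one has $\sum_{i=1}^N \alpha_i = 0$, so $(\omega x_0)^\alpha = \omega^{\sum_i \alpha_i} x_0^\alpha = x_0^\alpha$. Summing against $A_\alpha$ over $\alpha \in \boldsymbol{Z}_{N,n}$ yields
\begin{gather*}
H_n(x_0 w_0) = H_n(\omega x_0) = \sum_{\alpha \in \boldsymbol{Z}_{N,n}} A_\alpha (\omega x_0)^\alpha = \sum_{\alpha \in \boldsymbol{Z}_{N,n}} A_\alpha x_0^\alpha = H_n(x_0).
\end{gather*}

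Finally I would combine this with the covariance relation from Proposition \ref{Hprops}, namely $H_n(x_0 w_0) = \tau(w_0)^{-1} H_n(x_0) \tau(w_0)$, to conclude $\tau(w_0)^{-1} H_n(x_0) \tau(w_0) = H_n(x_0)$. There is no real obstacle here: the argument reduces to two elementary observations (the scaling identity $x_0 w_0 = \omega x_0$ and the homogeneity-zero property of $\boldsymbol{Z}_N$), both immediate from the definitions.
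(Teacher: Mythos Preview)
Your proof is correct and follows exactly the same approach as the paper's own proof: compute $x_0 w_0 = \omega x_0$, use the degree-zero homogeneity of $H_n$ (from $\sum_i \alpha_i = 0$) to get $H_n(\omega x_0) = H_n(x_0)$, and combine with the covariance relation $H_n(x_0 w_0) = \tau(w_0)^{-1} H_n(x_0)\tau(w_0)$ from Proposition~\ref{Hprops}.
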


\begin{proof}
By def\/inition $x_{0}w_{0}=\big( \omega,\ldots,\omega^{N-1},1\big) =\omega
x_{0}$. Each monomial $x^{\alpha}$ for $\alpha\in\boldsymbol{Z}_{N}$ is
homogeneous of degree zero (suppose $c\in\mathbb{C}\backslash\{
0\} $ then $(cx) ^{\alpha}=c^{\alpha_{1}+\cdots
+\alpha_{N}}x^{\alpha}=x^{\alpha}$) thus $H_{n}(x_{0}w_{0})
=H_{n}( \omega x_{0}) =H_{n}(x_{0}) $ and
$H_{n}(x_{0}w_{0}) =\tau(w_{0}) ^{-1}H_{n}(x_{0})\tau(w_{0}) $.
\end{proof}

We turn to the harmonic analysis signif\/icance of the matrices $\{
A_{\alpha}\} $. For an integrable func\-tion~$f$ on $\mathbb{T}^{N}$ the
Fourier transform (coef\/f\/icient) is
\begin{gather*}
\widehat{f}(\alpha) =\int_{\mathbb{T}^{N}}f(x)
x^{-\alpha}\mathrm{d}m(x) ,\qquad \alpha\in\mathbb{Z}^{N},
\end{gather*}
where $x:= ( \exp ( \mathrm{i}\theta_{1} ) ,\ldots,\exp (
\mathrm{i}\theta_{N} ) ) $ and $\mathrm{d}m(x)
=(2\pi) ^{-N}\mathrm{d}\theta_{1}\cdots\mathrm{d}\theta_{N}$;
and for a Baire measure $\mu$ on $\mathbb{T}^{N}$ the Fourier--Stieltjes
transform is%
\begin{gather*}
\widehat{\mu}(\alpha) =\int_{\mathbb{T}^{N}}x^{-\alpha
}\mathrm{d}\mu(x) , \qquad \alpha\in\mathbb{Z}^{N}.
\end{gather*}
We will show that there is a matrix-valued measure $\mu$, positive in a
certain sense, such that $\widehat{\mu}(\alpha) =A_{\alpha}$ for
all $\alpha\in\mathbb{Z}^{N}$ provided that $-1/h_{\tau}<\kappa<1/h_{\tau}$.
There is a~version of a theorem of Bochner about positive-def\/inite functions
on a~locally compact abelian group which proves this claim. The details of the
proof and some consequences are in Appendix~\ref{appendixA}.

Let $n=\dim V_{\tau}$ and identify $V_{\tau}$ with $\mathbb{C}^{n}$ whose
elements are considered as column vectors (in ef\/fect we use indices $1\leq
i\leq n$ instead of $\{ T\in\mathcal{Y}(\tau)\}
$). The inner product on $\mathbb{C}^{n}$ is $\langle u,v\rangle
:=\sum\limits_{i=1}^{n}\overline{u_{i}}v_{i}$, and the norm is $\vert
v\vert =\sqrt{\langle v,v\rangle }$. Note that $\langle
u,Av\rangle =u^{\ast}Av$. A positive-def\/inite matrix $P$ satisf\/ies
$\langle u,Pu\rangle \geq0$ for all $u\in\mathbb{C}^{n}$ (this
implies $P^{\ast}=P$).

\begin{Definition}
A function $F\colon \mathbb{Z}^{N}\rightarrow M_{n} ( \mathbb{C} ) $ is
positive-def\/inite if
\begin{gather*}
\sum\limits_{\alpha,\beta\in\mathbb{Z}^{N}}f(\alpha) ^{\ast
}F ( \alpha-\beta ) f(\beta) \geq0
\end{gather*}
for any f\/initely supported $\mathbb{C}^{n}$-valued function~$f$ on
$\mathbb{Z}^{N}$.
\end{Definition}

\begin{Theorem}
Suppose $F$ is positive-definite then there exist Baire measures $\{
\mu_{jk}\colon 1\leq j,k\leq n \} $ on $\mathbb{T}^{N}$ such that
\begin{gather*}
\int_{\mathbb{T}^{N}}x^{-\alpha}\mathrm{d}\mu_{jk}(x) =F (
\alpha ) _{jk},\qquad \alpha\in\mathbb{Z}^{N}, \qquad 1\leq j,k\leq n.
\end{gather*}
Furthermore each $\mu_{jj}$ is positive and
\begin{gather*}
\left\langle f,g\right\rangle _{F}:=\sum_{i,j=1}^{n}\int_{\mathbb{T}^{N}
}\overline{f(x) _{i}}g(x) _{j}\mathrm{d}\mu
_{ij}(x)
\end{gather*}
defines a positive-semidefinite Hermitian form on $C\big( \mathbb{T}^{N};\mathbb{C}^{n}\big) $ $($continuous $\mathbb{C}^{n}$-valued functions on
$\mathbb{T}^{N})$ satisfying $ \vert \langle f,g \rangle
_{F} \vert \leq B \Vert f \Vert _{\infty} \Vert g \Vert
_{\infty}$ for $f,g\in C \big( \mathbb{T}^{N};\mathbb{C}^{n} \big) $ with
$B<\infty$.
\end{Theorem}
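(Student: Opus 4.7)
The plan is to reduce the matrix statement to the classical Bochner theorem on the discrete abelian group $\mathbb{Z}^{N}$, whose dual is the compact group $\mathbb{T}^{N}$. First I would observe that for each fixed vector $v\in\mathbb{C}^{n}$, the scalar function $F_{v}(\alpha):=v^{\ast}F(\alpha)v$ is positive-definite on $\mathbb{Z}^{N}$ in the usual sense: applying the hypothesis on $F$ to vector-valued functions of the form $f(\alpha)=\phi(\alpha)\,v$ with $\phi$ a finitely supported scalar function yields $\sum_{\alpha,\beta}\overline{\phi(\alpha)}\phi(\beta)F_{v}(\alpha-\beta)\ge 0$. By the classical Bochner theorem there is then a unique finite positive Baire measure $\mu_{v}$ on $\mathbb{T}^{N}$ with $\widehat{\mu_{v}}(\alpha)=F_{v}(\alpha)$ for all $\alpha$. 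Setting $\mu_{jj}:=\mu_{e_{j}}$ supplies the positive diagonal measures.

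Next I would build the off-diagonal entries by polarization. The identity
$$4u^{\ast}F(\alpha)v=F_{u+v}(\alpha)-F_{u-v}(\alpha)-iF_{u+iv}(\alpha)+iF_{u-iv}(\alpha)$$
suggests defining, for $j\ne k$, the complex measure
$$\mu_{jk}:=\frac{1}{4}\bigl(\mu_{e_{j}+e_{k}}-\mu_{e_{j}-e_{k}}-i\mu_{e_{j}+ie_{k}}+i\mu_{e_{j}-ie_{k}}\bigr).$$
Taking Fourier--Stieltjes coefficients immediately yields $\widehat{\mu_{jk}}(\alpha)=F(\alpha)_{jk}$, which is the integral formula asserted. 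The relation $F(-\alpha)=F(\alpha)^{\ast}$ (a standard consequence of positive-definiteness, obtained by polarizing the scalar case) forces the Hermitian identity $\mu_{kj}=\overline{\mu_{jk}}$ via uniqueness of Fourier--Stieltjes transforms.

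For the form $\langle\cdot,\cdot\rangle_{F}$, I would first evaluate it on trigonometric polynomials. If $f(x)=\sum_{\alpha}c_{\alpha}x^{\alpha}$ and $g(x)=\sum_{\beta}d_{\beta}x^{\beta}$ are finitely supported with $c_{\alpha},d_{\beta}\in\mathbb{C}^{n}$, expanding and using $\int x^{\beta-\alpha}\,\mathrm{d}\mu_{ij}=F(\alpha-\beta)_{ij}$ gives
$$\langle f,g\rangle_{F}=\sum_{\alpha,\beta}c_{\alpha}^{\ast}F(\alpha-\beta)d_{\beta}.$$
Setting $g=f$, the right-hand side is exactly the defining positivity expression for $F$, so $\langle f,f\rangle_{F}\ge 0$. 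The boundedness $|\langle f,g\rangle_{F}|\le B\|f\|_{\infty}\|g\|_{\infty}$ with $B:=\sum_{i,j}\|\mu_{ij}\|_{\mathrm{TV}}$ is immediate from the estimate $|\int\overline{f_{i}}g_{j}\,\mathrm{d}\mu_{ij}|\le\|f\|_{\infty}\|g\|_{\infty}\|\mu_{ij}\|_{\mathrm{TV}}$ on each scalar integral. Density of trigonometric polynomials in $C(\mathbb{T}^{N};\mathbb{C}^{n})$ (Stone--Weierstrass), combined with this bound, then extends both the form and the positive-semidefiniteness from trigonometric polynomials to all continuous $\mathbb{C}^{n}$-valued functions.

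The main technical point is the polarization step: one has to verify carefully that the linear combination of positive measures indeed has the right Fourier coefficients, and that the resulting complex off-diagonal measures satisfy the Hermitian symmetry consistent with $F(-\alpha)=F(\alpha)^{\ast}$. Once the polarization identity is set up correctly everything else reduces to classical Bochner together with standard approximation and total-variation estimates.
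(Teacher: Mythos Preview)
Your proposal is correct and follows essentially the same route as the paper: reduce to the scalar Bochner theorem via the functions $u^{\ast}F(\alpha)u$, recover the off-diagonal measures by polarization, and obtain the form properties by evaluating on trigonometric polynomials and invoking density. The only cosmetic difference is that you use the standard four-term polarization $\tfrac14(\mu_{e_j+e_k}-\mu_{e_j-e_k}-i\mu_{e_j+ie_k}+i\mu_{e_j-ie_k})$, whereas the paper uses the two vectors $e_j+e_k$ and $e_j+ie_k$ and subtracts the diagonal contributions explicitly; both yield the same $\mu_{jk}$.
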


The proof is in Appendix~\ref{BTheorem} and Theorem~\ref{Bformthm}. (In
general the measures $\mu_{jk}$ are not real-valued for $j\neq k$.) For
notational simplicity we introduce
\begin{gather}
\int_{\mathbb{T}^{N}}f(x) ^{\ast}\mathrm{d}\mu(x)
g(x) :=\sum_{i,j=1}^{n}\int_{\mathbb{T}^{N}}\overline{f(x) _{i}}g(x) _{j}\mathrm{d}\mu_{ij}(x) .
\label{intmat}
\end{gather}

To show that $\alpha\mapsto A_{\alpha}$ is positive-def\/inite let $f$ be a
f\/initely supported $\mathbb{C}^{n}$-valued function~$f$ on $\mathbb{Z}^{N}$
and let $p(x) =\sum\limits_{\alpha,T} \langle
T,T \rangle _{0}^{-1/2}f_{T}(\alpha) x^{\alpha}\otimes T$
be the associated Laurent polynomial (now we use the $T$ indices on
$\mathbb{C}^{n}$). Because this is a f\/inite sum there is a nonnegative integer
$m$ such that $e_{N}^{m}p(x) $ is polynomial (no negative
powers). Then for $-1/h_{\tau}<\kappa<1/h_{\tau}$
\begin{gather*}
0 \leq\big\langle e_{N}^{m}p,e_{N}^{m}p\big\rangle _{\mathbb{T}}
 = \!\sum_{\alpha,\beta\in\mathbb{Z}^{N}}\sum_{T,T^{\prime}}\big(
 \langle T,T \rangle _{0}\big\langle T^{\prime},T^{\prime
}\big\rangle _{0}\big) ^{-1/2}\overline{f_{T}(\alpha)
}f_{T^{\prime}}(\beta) \big\langle x^{\alpha+m\boldsymbol{1}
}\otimes T,x^{\beta+m\boldsymbol{1}}\otimes T^{\prime}\big\rangle \\
\hphantom{0 \leq\big\langle e_{N}^{m}p,e_{N}^{m}p\big\rangle _{\mathbb{T}}}
 =\sum_{\alpha,\beta\in\mathbb{Z}^{N}}\sum_{T,T^{\prime}}\overline
{f_{T}(\alpha) }f_{T^{\prime}}(\beta) (
A_{\alpha-\beta} ) _{T,T^{\prime}}.
\end{gather*}
Let $\mu= [ \mu_{T,T^{\prime}} ] $ be the matrix of measures
produced by the theorem, that is,
\begin{gather*}
( A_{\alpha}) _{T,T^{\prime}}=\int_{\mathbb{T}^{N}}x^{-\alpha
}\mathrm{d}\mu_{T,T^{\prime}}(x) ,\qquad \alpha\in\mathbb{Z}
^{N},\qquad T,T^{\prime}\in\mathcal{Y}(\tau) .
\end{gather*}

\begin{Theorem}
For $-1/h_{\tau}<\kappa<1/h_{\tau}$ there exists a matrix of Baire measures
$\mu= [ \mu_{T,T^{\prime}} ] $ on~$\mathbb{T}^{N}$ such that
\begin{gather*}
\langle f,g\rangle _{\mathbb{T}}=\int_{\mathbb{T}^{N}}f(
x) ^{\ast}\mathrm{d}\mu(x) g(x)
\end{gather*}
for all Laurent polynomials $f$, $g$ with coefficients in~$V_{\tau}$, in
particular for all NSJP's $f$,~$g$.
\end{Theorem}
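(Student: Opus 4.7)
The plan is to apply the matrix-valued Bochner theorem stated just above to the function $\alpha\mapsto A_{\alpha}$, obtaining the desired measures $\mu_{T,T'}$, and then to verify the integral representation by direct computation on the monomial basis $\{x^{\alpha}\otimes T\}$ of Laurent polynomials.

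The prerequisite is to confirm positive-definiteness of $\alpha\mapsto A_{\alpha}$ throughout the open interval $-1/h_{\tau}<\kappa<1/h_{\tau}$, not only for generic $\kappa$. The preceding calculation in the section reduces this to $\langle q,q\rangle_{\mathbb{T}}\geq 0$ for every polynomial $q\in\mathcal{P}_{\tau}$, since $e_{N}^{m}$ is an isometry by Proposition~\ref{torusform}(4). Expanding $q$ in the NSJP basis and invoking Definition~\ref{Tformdef}, each factor in $\langle \zeta_{\lambda,T},\zeta_{\lambda,T}\rangle_{\mathbb{T}}$ has the form $1-(\kappa/(\ell+\kappa(c(i,T)-c(j,T))))^{2}$ with $\ell\geq 1$ and $\vert c(i,T)-c(j,T)\vert\leq h_{\tau}-1$, which is strictly positive whenever $\vert\kappa\vert<1/h_{\tau}$; the reduction from $\alpha$ to its rearrangement $\alpha^{+}$ via $\mathcal{E}_{\pm 1}(\alpha,T)$ introduces only factors $1-b'^{2}>0$ in the same range. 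Hence $\langle \zeta_{\alpha,T},\zeta_{\alpha,T}\rangle_{\mathbb{T}}>0$ for every $(\alpha,T)$, and mutual orthogonality of the NSJP basis gives $\langle q,q\rangle_{\mathbb{T}}\geq 0$. The matrix Bochner theorem then produces the Baire measures $\mu_{T,T'}$ satisfying $(A_{\alpha})_{T,T'}=\int_{\mathbb{T}^{N}}x^{-\alpha}\mathrm{d}\mu_{T,T'}(x)$.

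The main step of the proof itself is the verification on monomials. For $f=x^{\alpha}\otimes T$ and $g=x^{\beta}\otimes T'$, only the $T$-component of $f$ and the $T'$-component of $g$ are nonzero in the orthonormal basis $\{\langle T,T\rangle_{0}^{-1/2}T\}$, and convention~(\ref{intmat}) yields
\begin{gather*}
\int_{\mathbb{T}^{N}}f(x)^{\ast}\mathrm{d}\mu(x)g(x)
=\langle T,T\rangle_{0}^{1/2}\langle T',T'\rangle_{0}^{1/2}\int_{\mathbb{T}^{N}}x^{\beta-\alpha}\mathrm{d}\mu_{T,T'}(x)\\
=\langle T,T\rangle_{0}^{1/2}\langle T',T'\rangle_{0}^{1/2}(A_{\alpha-\beta})_{T,T'}
=\langle x^{\alpha}\otimes T,x^{\beta}\otimes T'\rangle_{\mathbb{T}},
\end{gather*}
the last equality being the definition of $\widetilde{A}$. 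Since both sides are sesquilinear and $\{x^{\alpha}\otimes T\}$ spans the Laurent polynomials in $V_{\tau}$, the identity extends by linearity to all Laurent polynomials, in particular to the NSJP's.

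The central difficulty is the positivity step: one must show that Definition~\ref{Tformdef} gives a positive form throughout the entire open interval $(-1/h_{\tau},1/h_{\tau})$ rather than only at generic $\kappa$ (where the NSJP's automatically form a basis). The uniform bound $\vert c(i,T)-c(j,T)\vert\leq h_{\tau}-1$ on contents handles this factor by factor; once it is in place, the rest is bookkeeping between the unnormalized basis $\{T\}$ used for $\langle\cdot,\cdot\rangle_{\mathbb{T}}$ and the orthonormal basis $\{\langle T,T\rangle_{0}^{-1/2}T\}$ used for $\mu$.
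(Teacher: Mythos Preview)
Your argument is correct and follows the same route as the paper: verify that $\alpha\mapsto A_{\alpha}$ is positive-definite by reducing to $\langle e_{N}^{m}p,\,e_{N}^{m}p\rangle_{\mathbb{T}}\geq 0$, then invoke the matrix Bochner theorem, and read off the integral representation on monomials. Your added detail on why each norm $\langle\zeta_{\alpha,T},\zeta_{\alpha,T}\rangle_{\mathbb{T}}$ is strictly positive in the range $\vert\kappa\vert<1/h_{\tau}$ simply makes explicit what the paper establishes earlier in Section~\ref{section3} (the discussion around $1-b'^{2}>0$), and your monomial verification spells out what the paper leaves implicit in the definition of $A_{\alpha}$.

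One remark on the point you flag as the ``central difficulty'': the NSJP's remain a basis throughout $\vert\kappa\vert<1/(h_{\tau}-1)\supset(-1/h_{\tau},1/h_{\tau})$ by the triangularity of their leading terms $x^{\alpha}\otimes\tau(r_{\alpha}^{-1})T$ with respect to $\vartriangleleft$, independently of whether the spectral vectors are pairwise distinct; and Definition~\ref{Tformdef} \emph{declares} the off-diagonal pairings to be zero. So the expansion and orthogonality you use are valid on the whole interval, not only for generic~$\kappa$.
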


Of course we want more detailed information about these measures. The f\/irst
step is to apply an approximate identity, a tool from the convolution
structure for measures and functions on the torus. We consider Ces\`{a}ro
summation of the series $\sum_{\alpha}A_{\alpha}x^{\alpha}$ based on summing
f\/irst over each $\boldsymbol{Z}_{N,n}$. Set%
\begin{gather*}
S_{n}(x) :=\sum_{\alpha\in\boldsymbol{Z}_{N,n}}x^{\alpha},
\end{gather*}
a Laurent polynomial, and the corresponding $( C,\delta) $-kernel
(for $\delta>0$) is def\/ined to be (the Pochhammer symbol is $(t)
_{m}=\prod\limits_{i=1}^{m} ( t+i-1 ) $)
\begin{gather*}
\sigma_{n}^{\delta}(x) :=\sum_{k=0}^{n}\frac{(-n)
_{k}}{(-n-\delta) _{k}}S_{k}(x) .
\end{gather*}
The point is that $\lim\limits_{n\rightarrow\infty}\frac{(-n)
_{k}}{(-n-\delta) _{k}}=1$ for f\/ixed $k$. In terms of
convolution
\begin{gather*}
\sigma_{n}^{\delta}\ast\mu(x) =\int_{\mathbb{T}^{N}}
\sigma_{n}^{\delta}\big( xy^{-1}\big) \mathrm{d}\mu(y), \\
\widehat{( \sigma_{n}^{\delta}\ast\mu) }(\alpha)
 =\int_{\mathbb{T}^{N}}\int_{\mathbb{T}^{N}}x^{-\alpha}\sigma_{n}^{\delta
}\big( xy^{-1}\big) \mathrm{d}\mu(y) \mathrm{d}m(x) \\
\hphantom{\widehat{( \sigma_{n}^{\delta}\ast\mu) }(\alpha)}{} =\int_{\mathbb{T}^{N}}\int_{\mathbb{T}^{N}}(xy) ^{-\alpha
}\sigma_{n}^{\delta}(x) \mathrm{d}\mu(y)
\mathrm{d}m(x) =A_{\alpha}\widehat{\sigma_{n}^{\delta}}(\alpha) ,
\end{gather*}
and $\widehat{\sigma_{n}^{\delta}}(\alpha) =\frac{(
-n) _{k}}{(-n-\delta) _{k}}$ for $\alpha\in
\boldsymbol{Z}_{N,k}$ for $0\leq k\leq n$ and $=0$ for $\vert
\alpha\vert >2n$ (or $\alpha\notin\boldsymbol{Z}_{N}$). Thus $\sigma
_{n}^{\delta}\ast\mu(x) =\sum\limits_{k=0}^{n}\frac{(
-n) _{k}}{(-n-\delta) _{k}}H_{k}(x) $. In
fact $\sigma_{n}^{N-1}(x) \geq0$ for all $x\in\mathbb{T}^{N}$
(Corollary~\ref{Cespos} below) which implies $\sigma_{n}^{N-1}\ast\mu$
converges to $\mu$ in a useful sense (weak-$\ast$, see Theorem \ref{propKn}(4)) and $\sigma_{n}^{N-1}\ast\mu(x) $ is a Laurent polynomial
all of whose point values are positive-semidef\/inite matrices. Also $\big\Vert
\sigma_{n}^{N-1}\big\Vert _{1}:=\int_{\mathbb{T}^{N}}\big\vert \sigma
_{n}^{N-1}\big\vert \mathrm{d}m=1$.

The \textit{complete symmetric polynomial} in $N$ variables and degree $n$ is
given by
\begin{gather*}
h_{n}(x) :=\sum\left\{ x^{\alpha}\colon \alpha\in\mathbb{N}_{0}
^{N}\colon \sum_{i=1}^{N}\alpha_{i}=n\right\} .
\end{gather*}
Recall
\begin{gather*}
\#\left\{ \alpha\in\mathbb{N}_{0}^{N}\colon \sum_{i=1}^{N}\alpha
_{i}=m\right\} =\frac{(N) _{m}}{m!} \qquad \text{for} \quad m=0,1,2,3,\ldots.
\end{gather*}

\begin{Theorem}
For $n\geq0$
\begin{gather}
h_{n}\left( \frac{1}{x_{1}},\frac{1}{x_{2}},\ldots,\frac{1}{x_{N}}\right)
h_{n} ( x_{1},\ldots,x_{N} ) =\frac{(N) _{n}}
{n!}\sigma_{n}^{N-1}(x) .
\end{gather}
\end{Theorem}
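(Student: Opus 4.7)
\smallskip

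\noindent\textbf{Proof plan.} The plan is to expand the left hand side monomial by monomial, identify the coefficient of $x^\gamma$ for each $\gamma\in\mathbb{Z}^N$, and then recognize the resulting series as a multiple of $\sigma_n^{N-1}$.

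First I would write
\begin{gather*}
h_n\bigl(x_1^{-1},\ldots,x_N^{-1}\bigr)\,h_n(x_1,\ldots,x_N)
=\sum_{\substack{\alpha,\beta\in\mathbb{N}_0^N\\|\alpha|=|\beta|=n}}x^{\beta-\alpha},
\end{gather*}
so for $\gamma\in\mathbb{Z}^N$ the coefficient of $x^\gamma$ equals the number of pairs $(\alpha,\beta)$ with $|\alpha|=|\beta|=n$ and $\beta-\alpha=\gamma$. Clearly this count vanishes unless $\sum_i\gamma_i=0$, i.e.\ $\gamma\in\boldsymbol{Z}_N$, so the left hand side is supported on $\boldsymbol{Z}_N$.

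Next, for $\gamma\in\boldsymbol{Z}_N$ I would parametrize the relevant pairs by $\zeta_i:=\min(\alpha_i,\beta_i)$; using the decomposition $\gamma=\gamma^\pi-\gamma^\nu$ from the paper one checks that $\alpha=\zeta+\gamma^\nu$ and $\beta=\zeta+\gamma^\pi$, and conversely every $\zeta\in\mathbb{N}_0^N$ yields a valid pair. The constraint $|\alpha|=n$ becomes $|\zeta|=n-k$ where $2k=\sum_i|\gamma_i|$, so the number of pairs is $\#\{\zeta\in\mathbb{N}_0^N:|\zeta|=n-k\}=(N)_{n-k}/(n-k)!$ (provided $k\le n$; otherwise it is zero). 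Summing over $k$ and over $\gamma\in\boldsymbol{Z}_{N,k}$ gives
\begin{gather*}
h_n\bigl(x^{-1}\bigr)\,h_n(x)=\sum_{k=0}^{n}\frac{(N)_{n-k}}{(n-k)!}\,S_k(x).
\end{gather*}

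The remaining step is a routine Pochhammer manipulation. With $\delta=N-1$ the identity
\begin{gather*}
\frac{(N)_{n-k}}{(n-k)!}=\frac{(N)_n}{n!}\cdot\frac{(-n)_k}{(-n-\delta)_k}
\end{gather*}
follows from $(-n)_k=(-1)^k n!/(n-k)!$ and $(-n-N+1)_k=(-1)^k(n+N-1)!/(n+N-1-k)!$. Substituting into the previous display and recalling the definition of $\sigma_n^{\delta}$ from the paper gives the claimed identity. The main (and only) real point of the argument is the bijective parametrization $(\alpha,\beta)\leftrightarrow(\zeta,\gamma)$ in the coefficient count; once that is in place the rest is bookkeeping with Pochhammer symbols.
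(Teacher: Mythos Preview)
Your proof is correct and follows essentially the same approach as the paper: both expand the product, count for each $\gamma\in\boldsymbol{Z}_{N,k}$ the pairs $(\alpha,\beta)$ with difference $\gamma$ via the shift parameter $\zeta$ (the paper's $\alpha'$), obtain the coefficient $(N)_{n-k}/(n-k)!$, and finish with the same Pochhammer identity. The only differences are cosmetic (you write $x^{\beta-\alpha}$ where the paper writes $x^{\alpha-\beta}$, and use $k$ for the paper's $m$).
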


\begin{proof}
The product is a sum of terms $x^{\alpha-\beta}$ with $\alpha,\beta
\in\mathbb{N}_{0}^{N}$ and $\vert\alpha\vert =n= \vert
\beta \vert $. For example the term $x^{0}=1$ appears exactly
$\frac{(N) _{n}}{n!}$ times, because the number of terms in
$h_{n}$ is $\frac{(N) _{n}}{n!}$. Consider a~f\/ixed $\gamma
\in\boldsymbol{Z}_{N,m}$ for some $m$ with $0\leq m\leq n$. The term
$x^{\gamma}$ appears in the product for each pair $( \alpha
,\beta) $ with
\begin{gather*}
\alpha=\gamma^{\pi}+\alpha^{\prime},\qquad \beta=\gamma^{\nu}+\alpha^{\prime}, \qquad \gamma=\alpha-\beta,
\end{gather*}
where $\alpha^{\prime}\in\mathbb{N}_{0}^{N}$ and $\sum\limits_{i=1}^{N}\alpha
_{i}^{^{\prime}}=n-m$. Recall $\gamma_{i}^{\pi}=\max( \gamma
_{i},0) $ and $\gamma_{i}^{\nu}=-\min ( 0,\gamma_{i} )
=\max ( 0,-\gamma_{i} ) $; thus $\sum\limits_{i=1}^{N}\gamma_{i}^{\pi}=m$
and $\sum\limits_{i=1}^{N}\alpha_{i}=n$. Therefore the coef\/f\/icient of $x^{\gamma}$ is
\begin{gather*}
\#\left\{ \alpha^{\prime}\in\mathbb{N}_{0}^{N}\colon \sum\limits_{i=1}^{N}\alpha
_{i}^{^{\prime}}=n-m\right\} =\frac{(N) _{n-m}}{(n-m) !}.
\end{gather*}
Hence
\begin{gather*}
h_{n}\left( \frac{1}{x_{1}},\frac{1}{x_{2}},\ldots,\frac{1}{x_{N}}\right)
h_{n}( x_{1},\ldots,x_{N}) =\sum_{m=0}^{N}\frac{(N) _{n-m}}{(n-m) !}S_{m}(x) .
\end{gather*}
To f\/inish the proof multiply this relation by $\frac{n!}{(N)
_{n}}$ and compute
\begin{gather*}
\frac{n!}{(N) _{n}}\frac{(N) _{n-m}}{(n-m) !} =(-1) ^{m}(-n) _{m}\frac{(N) _{n-m}}{(N) _{n-m}(N+n-m)
_{m}}\\
\hphantom{\frac{n!}{(N) _{n}}\frac{(N) _{n-m}}{(n-m) !}}{}
=(-1) ^{m}\frac{(-n) _{m}}{(N+n-m) _{m}}=\frac{(-n) _{m}}{( 1-N-n) _{m}}.\tag*{\qed}
\end{gather*}
\renewcommand{\qed}{}
\end{proof}

\begin{Corollary}
\label{Cespos}$\sigma_{n}^{N-1}(x) \geq0$ for all $x\in
\mathbb{T}^{N}$.
\end{Corollary}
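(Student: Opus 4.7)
The plan is to read the corollary as an immediate consequence of the preceding theorem once one exploits the fact that on the torus $\mathbb{T}^{N}$ the coordinates satisfy $\overline{x_i} = 1/x_i$. Concretely, for $x \in \mathbb{T}^{N}$ one has
\begin{gather*}
h_{n}\!\left(\tfrac{1}{x_{1}},\ldots,\tfrac{1}{x_{N}}\right)
= h_{n}(\overline{x_{1}},\ldots,\overline{x_{N}})
= \overline{h_{n}(x_{1},\ldots,x_{N})},
\end{gather*}
because $h_n$ has real (indeed integer) coefficients. Hence the left-hand side of the identity proved in the theorem is $|h_{n}(x)|^{2}$, which is nonnegative.

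Next I would observe that the scalar factor $\frac{(N)_{n}}{n!}$ on the right-hand side is strictly positive for every $n \geq 0$ (it is a product of positive integers divided by a positive integer, or simply the number of monomials in $h_n$). Dividing the identity through by this positive constant gives
\begin{gather*}
\sigma_{n}^{N-1}(x) = \frac{n!}{(N)_{n}}\,\bigl|h_{n}(x_{1},\ldots,x_{N})\bigr|^{2} \geq 0
\end{gather*}
for every $x \in \mathbb{T}^{N}$, which is exactly the claim.

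There is no real obstacle here; the work has already been done in the theorem. The only thing to be careful about is the identification $1/x_i = \overline{x_i}$, which requires $|x_i| = 1$ and hence is precisely where the restriction $x \in \mathbb{T}^{N}$ (as opposed to $(\mathbb{C}\setminus\{0\})^{N}$) enters. Once that is noted, the corollary is a one-line consequence of the factorization $\sigma_{n}^{N-1} = \frac{n!}{(N)_{n}} |h_n|^{2}$.
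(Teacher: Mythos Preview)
Your proof is correct and is essentially the same as the paper's own argument: the paper's proof consists of the single line $\overline{h_{n}(x_{1},\ldots,x_{N})}=h_{n}\bigl(\tfrac{1}{x_{1}},\ldots,\tfrac{1}{x_{N}}\bigr)$ for $x\in\mathbb{T}^{N}$, leaving the rest (positivity of the Pochhammer factor and of $|h_n|^2$) implicit. You have simply spelled out those remaining trivial steps explicitly.
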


\begin{proof}
$\overline{h_{n}( x_{1},\ldots,x_{N}) }=h_{n}\left( \frac
{1}{x_{1}},\frac{1}{x_{2}},\ldots,\frac{1}{x_{N}}\right) $ for $x\in
\mathbb{T}^{N}$.
\end{proof}

Observe that this kernel applies to the quotient space $\mathbb{T}%
^{N}/\mathbb{D}$ where%
\begin{gather*}
\mathbb{D}:= \{ ( u,u,\ldots,u ) \colon u\in\mathbb{C},\, \vert
u \vert =1 \} ,
\end{gather*}
is the diagonal subgroup. That is, each $S_{n}(x) $ is
homogeneous of degree zero, constant on sets $ \{ ( ux_{1},ux_{2},\ldots,ux_{N} ) \colon \vert u \vert =1 \} $ for f\/ixed
$x\in\mathbb{T}^{N}$.

Here are approximate identity properties of $\sigma_{n}^{N-1}$; we use
$\mathbb{T}^{N}/\mathbb{D}$ to refer to functions homogeneous of degree zero.
There is a standard formula:

\begin{Lemma}
\label{convghnu}Suppose $g,h\in C\big( \mathbb{T}^{N}\big) $ and $\nu$ is
a Baire measure on $\mathbb{T}^{N}$ then for $h^{\dag}(x)
:=h\big( x^{-1}\big) $
\begin{gather*}
\int_{\mathbb{T}^{N}}g(x) ( h\ast\nu ) (
x ) \mathrm{d}m(x) =\int_{\mathbb{T}^{N}}\big( g\ast
h^{\dag}\big) (y) \mathrm{d}\nu(y) .
\end{gather*}
\end{Lemma}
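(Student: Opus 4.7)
The plan is to unfold both sides by the definition of convolution, apply Fubini on the product $\mathbb{T}^N \times \mathbb{T}^N$, and then recognize the resulting inner integral as $(g * h^{\dag})(y)$ using the translation and inversion invariance of the Haar measure $dm$ on the abelian group $\mathbb{T}^N$.

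Concretely, I would first expand, using the convolution convention $(h*\nu)(x) = \int h(xy^{-1})\, d\nu(y)$ already in force in the paper,
$$\int_{\mathbb{T}^N} g(x)(h * \nu)(x)\, dm(x) = \int_{\mathbb{T}^N}\!\int_{\mathbb{T}^N} g(x)\, h(xy^{-1})\, d\nu(y)\, dm(x).$$
Since $g$ and $h$ are continuous on the compact group $\mathbb{T}^N$ and hence bounded, and $\nu$ is a finite Baire measure, the integrand $(x,y) \mapsto g(x)\, h(xy^{-1})$ is bounded and jointly continuous, so Fubini's theorem applies and we may interchange the order of integration.

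For fixed $y$, I would then substitute $w = xy^{-1}$ in the inner integral; by the translation invariance of $dm$ this gives
$$\int_{\mathbb{T}^N} g(x)\, h(xy^{-1})\, dm(x) = \int_{\mathbb{T}^N} g(wy)\, h(w)\, dm(w),$$
and a further substitution $w \mapsto w^{-1}$, which preserves $dm$ on the abelian group $\mathbb{T}^N$, converts this via $h^{\dag}(w) = h(w^{-1})$ into
$$\int_{\mathbb{T}^N} g(yw^{-1})\, h^{\dag}(w)\, dm(w) = (g * h^{\dag})(y),$$
where the final equality uses the same convolution convention extended to two continuous functions. Substituting back into the outer integral against $\nu$ yields the claimed identity.

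The only point requiring care is keeping the convolution conventions aligned with the formula $\sigma_n^{\delta} * \mu(x) = \int \sigma_n^{\delta}(xy^{-1})\, d\mu(y)$ already in use; otherwise there is no substantive obstacle, as the argument is just Fubini combined with two standard invariance properties of Haar measure on $\mathbb{T}^N$.
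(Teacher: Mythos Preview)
Your proposal is correct and follows essentially the same approach as the paper: expand the convolution, apply Fubini, and recognize the inner integral as $(g\ast h^{\dag})(y)$. The paper is slightly more direct, writing $h(xy^{-1})=h^{\dag}(yx^{-1})$ in one step rather than performing two changes of variable, but the substance is identical.
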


\begin{proof}
The left side equals
\begin{gather*}
\int_{\mathbb{T}^{N}}\int_{\mathbb{T}^{N}}g(x) h\big(
xy^{-1}\big) \mathrm{d}\nu(y) \mathrm{d}m(x)
=\int_{\mathbb{T}^{N}}\int_{\mathbb{T}^{N}}g(x) h^{\dag}\big(
yx^{-1}\big) \mathrm{d}m(x) \mathrm{d}\nu(y)
\end{gather*}
(by Fubini's theorem) which equals the right side.
\end{proof}

The following is a standard result on approximate identities.

\begin{Proposition}\label{appxC} Suppose $f\in C\big( \mathbb{T}^{N}/\mathbb{D}\big) $ then
$\big\Vert f-f\ast\sigma_{n}^{N-1}\big\Vert _{\infty}\rightarrow0$ as
$n\rightarrow\infty$.
\end{Proposition}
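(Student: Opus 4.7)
The plan is to verify the three standard conditions for $\{\sigma_n^{N-1}\}$ to form an approximate identity on the compact quotient group $\mathbb{T}^N/\mathbb{D}$: nonnegativity, total mass one, and concentration of mass in every neighborhood of the identity coset $[\mathbf{1}]$, where $\mathbf{1}=(1,\ldots,1)$. Nonnegativity is Corollary~\ref{Cespos}, and unit mass follows from the definition together with orthogonality of the monomial characters: $S_0\equiv 1$ is the only summand with nonzero integral against $\mathrm{d}m$, and its coefficient is $(-n)_0/(-n-N+1)_0=1$.

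The main obstacle is the concentration property, because $\sigma_n^{N-1}$ is not visibly localized near the diagonal. I plan to deduce it from the Fourier-side convergence $\widehat{\sigma_n^{N-1}}(\alpha)=(-n)_k/(-n-N+1)_k\to 1$ for each fixed $\alpha\in\boldsymbol{Z}_{N,k}$, together with $\widehat{\sigma_n^{N-1}}(\mathbf{0})=1$; in particular $\int_{\mathbb{T}^N}x^\alpha\sigma_n^{N-1}(x)\,\mathrm{d}m(x)\to 1$ for every $\alpha\in\boldsymbol{Z}_N$. The unital subalgebra of $C(\mathbb{T}^N/\mathbb{D})$ generated by $\{x^\alpha:\alpha\in\boldsymbol{Z}_N\}$ is self-adjoint (since $\overline{x^\alpha}=x^{-\alpha}$ on $\mathbb{T}^N$ and $-\boldsymbol{Z}_N=\boldsymbol{Z}_N$) and separates points of $\mathbb{T}^N/\mathbb{D}$: if $[x]\neq[y]$ then $x_iy_i^{-1}\neq x_jy_j^{-1}$ for some $i,j$, whence the monomial $x^{e_i-e_j}$ distinguishes them. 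By Stone--Weierstrass this subalgebra is dense in $C(\mathbb{T}^N/\mathbb{D})$, and combined with the uniform bound $\|\sigma_n^{N-1}\|_1=1$ this upgrades (via an $\epsilon/3$ approximation) to $\int_{\mathbb{T}^N}g\cdot\sigma_n^{N-1}\,\mathrm{d}m\to g(\mathbf{1})$ for every $g\in C(\mathbb{T}^N/\mathbb{D})$. Applying this to an Urysohn-type $g$ with $g([\mathbf{1}])=0$ and $g\equiv 1$ off a prescribed neighborhood $V$ of $[\mathbf{1}]$ yields $\int_{V^c}\sigma_n^{N-1}\,\mathrm{d}m\to 0$.

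With these three ingredients the proposition follows by the classical approximate-identity argument. Since $\mathbb{T}^N/\mathbb{D}$ is compact, $f$ is uniformly continuous; given $\epsilon>0$ choose a neighborhood $V$ of $[\mathbf{1}]$ so that $|f(x)-f(xy^{-1})|<\epsilon$ for every $x\in\mathbb{T}^N$ whenever $[y]\in V$. Using nonnegativity of $\sigma_n^{N-1}$ and $\int\sigma_n^{N-1}\,\mathrm{d}m=1$, write
\begin{gather*}
f(x)-\bigl(f\ast\sigma_n^{N-1}\bigr)(x)=\int_{\mathbb{T}^N}\bigl(f(x)-f\bigl(xy^{-1}\bigr)\bigr)\sigma_n^{N-1}(y)\,\mathrm{d}m(y),
\end{gather*}
and split the integral over $V$ and $V^c$ to obtain
\begin{gather*}
\bigl|f(x)-\bigl(f\ast\sigma_n^{N-1}\bigr)(x)\bigr|\le \epsilon + 2\|f\|_\infty\int_{V^c}\sigma_n^{N-1}(y)\,\mathrm{d}m(y),
\end{gather*}
uniformly in $x$. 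The concentration property drives the right-hand side to $\epsilon$, giving $\limsup_n\|f-f\ast\sigma_n^{N-1}\|_\infty\le\epsilon$, and letting $\epsilon\to 0$ finishes the argument.
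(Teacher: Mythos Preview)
Your proof is correct, but it takes a longer route than the paper's. Both arguments rest on the same three facts: $\sigma_n^{N-1}\ge 0$ (Corollary~\ref{Cespos}), $\int\sigma_n^{N-1}\,\mathrm{d}m=1$, and the pointwise convergence $\widehat{\sigma_n^{N-1}}(\alpha)\to 1$ for each $\alpha\in\boldsymbol{Z}_N$, together with the density of Laurent polynomials in $C(\mathbb{T}^N/\mathbb{D})$ (your Stone--Weierstrass step). The paper, however, bypasses the approximate-identity machinery entirely: given $\varepsilon>0$ it picks a Laurent polynomial $p$ with $\|f-p\|_\infty<\varepsilon$, writes
\[
f-f\ast\sigma_n^{N-1}=(f-p)+(p-p\ast\sigma_n^{N-1})+(p-f)\ast\sigma_n^{N-1},
\]
bounds the first and third terms by $\varepsilon$ and $\varepsilon\|\sigma_n^{N-1}\|_1=\varepsilon$, and observes that $p-p\ast\sigma_n^{N-1}$ is a finite sum $\sum_\alpha\bigl(1-\widehat{\sigma_n^{N-1}}(\alpha)\bigr)c_\alpha x^\alpha\to 0$ uniformly. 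Your argument instead first converts these same ingredients into weak-$*$ convergence $\sigma_n^{N-1}\,\mathrm{d}m\to\delta_{[\mathbf{1}]}$, then extracts the concentration property via an Urysohn function, and finally runs the classical uniform-continuity estimate. The payoff of your detour is that you genuinely establish $\{\sigma_n^{N-1}\}$ as an approximate identity on $\mathbb{T}^N/\mathbb{D}$, which would also yield $L^p$ convergence; the paper's shortcut is more economical for the specific $\|\cdot\|_\infty$ statement needed here.
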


\begin{proof}
For $\varepsilon>0$ there exists a Laurent polynomial $p$ on $\mathbb{T}
^{N}/\mathbb{D}$ such that $\Vert f-p\Vert _{\infty}<\varepsilon$.
Then%
\begin{gather*}
f-f\ast\sigma_{n}^{N-1}=(f-p) +\big( p-\sigma_{n}^{N-1}\ast
p\big) +(p-f) \ast\sigma_{n}^{N-1},
\end{gather*}
and $\big\Vert (p-f) \ast\sigma_{n}^{N-1}\big\Vert _{\infty
}\leq \Vert p-f \Vert _{\infty}\big\Vert \sigma_{n}^{N-1}\big\Vert
_{1}<\varepsilon$. Let $p(x) =\sum\limits_{m=0}^{M}%
\sum\limits_{\alpha\in\boldsymbol{Z}_{N,m}}c_{\alpha}x^{\alpha}$ for some
coef\/f\/icients~$c_{\alpha}$ (and f\/inite~$M$); thus
\begin{gather*}
\big( p-\sigma_{n}^{N-1}\ast p\big) (x) =\sum
\limits_{m=0}^{M}\sum\limits_{\alpha\in\boldsymbol{Z}_{N,m}}\left(
1-\frac{(-n) _{m}}{(1-N-n) _{m}}\right)
c_{\alpha}x^{\alpha},
\end{gather*}
which tends to zero in norm as $n\rightarrow\infty$.
\end{proof}

\begin{Corollary}
Suppose $\nu$ is a Baire measure on $\mathbb{T}^{N}$ and $f\in C\big(
\mathbb{T}^{N}/\mathbb{D}\big) $ then
\begin{gather*}
\lim_{n\rightarrow\infty}\int_{\mathbb{T}^{N}}f(x) \big(
\sigma_{n}^{N-1}\ast\nu\big) (x) \mathrm{d}m(x)
=\int_{\mathbb{T}^{N}}f(x) \mathrm{d}\nu(x) .
\end{gather*}
\end{Corollary}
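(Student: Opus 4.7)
The plan is to reduce this to the approximate identity result already established in Proposition~\ref{appxC} by shifting the convolution off of $\nu$ and onto $f$. First I would apply Lemma~\ref{convghnu} with $g=f$, $h=\sigma_{n}^{N-1}$, and the measure $\nu$, to rewrite
\begin{gather*}
\int_{\mathbb{T}^{N}}f(x) \big( \sigma_{n}^{N-1}\ast\nu\big)(x) \mathrm{d}m(x) =\int_{\mathbb{T}^{N}}\big( f\ast(\sigma_{n}^{N-1})^{\dag}\big)(y) \mathrm{d}\nu(y) .
\end{gather*}

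Next I would observe that $(\sigma_{n}^{N-1})^{\dag}=\sigma_{n}^{N-1}$. Indeed $S_{k}(x) =\sum_{\alpha\in\boldsymbol{Z}_{N,k}}x^{\alpha}$, and since $\alpha\mapsto-\alpha$ is a bijection of $\boldsymbol{Z}_{N,k}$ onto itself (as noted just before the definition of $H_{n}$), we have $S_{k}(x^{-1}) =S_{k}(x) $. Hence the same holds for $\sigma_{n}^{N-1}$, and the identity above becomes
\begin{gather*}
\int_{\mathbb{T}^{N}}f(x) \big( \sigma_{n}^{N-1}\ast\nu\big)(x) \mathrm{d}m(x) =\int_{\mathbb{T}^{N}}\big( f\ast\sigma_{n}^{N-1}\big)(y) \mathrm{d}\nu(y) .
\end{gather*}

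By Proposition~\ref{appxC}, $\big\Vert f-f\ast\sigma_{n}^{N-1}\big\Vert _{\infty}\rightarrow0$ as $n\rightarrow\infty$. Since a Baire measure on the compact space $\mathbb{T}^{N}$ has finite total variation $\Vert \nu\Vert $, uniform convergence of the integrands yields
\begin{gather*}
\left\vert \int_{\mathbb{T}^{N}}\big( f\ast\sigma_{n}^{N-1}\big)(y) \mathrm{d}\nu(y) -\int_{\mathbb{T}^{N}}f(y) \mathrm{d}\nu(y) \right\vert \leq\big\Vert f-f\ast\sigma_{n}^{N-1}\big\Vert _{\infty}\Vert \nu\Vert \longrightarrow0,
\end{gather*}
which gives the desired limit. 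The only nontrivial point in the argument is the symmetry $(\sigma_{n}^{N-1})^{\dag}=\sigma_{n}^{N-1}$ needed to put Lemma~\ref{convghnu} into usable form; everything else is a routine combination of the lemma, the uniform-approximation proposition, and finiteness of Baire measures on a compact group.
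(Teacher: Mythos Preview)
Your proof is correct and follows essentially the same approach as the paper: apply Lemma~\ref{convghnu}, use the symmetry $(\sigma_{n}^{N-1})^{\dag}=\sigma_{n}^{N-1}$, and conclude from the uniform convergence of Proposition~\ref{appxC}. You simply spell out in more detail why the symmetry holds and why uniform convergence suffices against a finite measure.
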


\begin{proof}
By Lemma \ref{convghnu}
\begin{gather*}
\int_{\mathbb{T}^{N}}f(x) \big( \sigma_{n}^{N-1}\ast
\nu\big) (x) \mathrm{d}m(x) =\int
_{\mathbb{T}^{N}}\big( f\ast\sigma_{n}^{N-1}\big) (x)
\mathrm{d}\nu(x) ,
\end{gather*}
since $\big( \sigma_{n}^{N-1}\big) ^{\dag}=\sigma_{n}^{N-1}$, and
$f\ast\sigma_{n}^{N-1}$ converges uniformly to $f$ as $n\rightarrow\infty$.
\end{proof}

\begin{Definition}
Def\/ine the $\mu$-approximating Laurent polynomials
\begin{gather*}
K_{n}(x) :=\sigma_{n}^{N-1}\ast\mu(x) =\sum
_{m=0}^{n}\frac{(-n) _{m}}{(1-N-n) _{m}}\sum_{\alpha\in\boldsymbol{Z}_{N,n}}A_{\alpha}x^{\alpha}.
\end{gather*}
\end{Definition}

Note $\frac{(-n) _{m}}{(1-N-n) _{m}}
=\frac{(n-m+1) _{N-1}}{(n+1) _{N-1}}$ for $0\leq
m\leq n$; for example with $N=3$, $\frac{(-n) _{m}}{(
-2-n) _{m}}=\big( 1-\frac{m}{n+1}\big) \big( 1-\frac{m}{n+2}\big)$, and $=0$ for $m>n$.

\begin{Theorem}
\label{propKn}
For $-1/h_{\tau}<\kappa<1/h_{\tau}$ and $n=1,2,3,\ldots$ the
following hold:
\begin{enumerate}\itemsep=0pt
\item[$(1)$] $K_{n}(x) $ is positive semi-definite
for each $x\in\mathbb{T}^{N}$,
\item[$(2)$] $K_{n}(xw)
=\tau(w) ^{-1}K_{n}(x) \tau(w) $ for
each $x\in\mathbb{T}^{N},w\in\mathcal{S}_{N}$,
\item[$(3)$] $K_{n}(x_{0}) \tau(w_{0}) =\tau(w_{0})
K_{n}(x_{0})$,
\item[$(4)$] $\lim\limits_{n\rightarrow\infty}
\int_{\mathbb{T}^{N}}f(x) ^{\ast}K_{n}(x) g(
x) \mathrm{d}m(x) = \langle f,g \rangle
_{\mathbb{T}}$ for all $f,g\in\mathcal{P}_{\tau}$; the limit exists for any
$f,g\in C\big( \mathbb{T}^{N};V_{\tau}\big) $ and defines a $ \Vert
\cdot \Vert _{\infty}$-bounded positive Hermitian form.
\end{enumerate}
\end{Theorem}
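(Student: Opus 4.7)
The strategy is to dispatch properties (2) and (3) as direct algebraic corollaries of earlier results, then prove (1) using the factorization of $\sigma_n^{N-1}$ as a squared modulus, and finally handle (4) by a direct computation on polynomials combined with an approximation argument on continuous functions.

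Property (2) follows immediately by writing $K_n(x) = \sum_{m=0}^n c_m^{(n)} H_m(x)$ with scalar coefficients $c_m^{(n)} := (-n)_m/(1-N-n)_m$ and invoking the identity $H_m(xw) = \tau(w)^{-1} H_m(x) \tau(w)$ from Proposition~\ref{Hprops} termwise. For (3) one checks directly that $x_0 w_0 = \omega x_0$, and since each monomial in $K_n$ is indexed by $\alpha \in \boldsymbol{Z}_N$ (hence homogeneous of total degree zero in $x$), one has $K_n(x_0 w_0) = K_n(\omega x_0) = K_n(x_0)$. Combined with (2) at $w = w_0$, $x = x_0$ this yields $\tau(w_0)K_n(x_0) = K_n(x_0)\tau(w_0)$.

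The heart of the proof is (1). The preceding theorem supplies the factorization $\sigma_n^{N-1}(x) = (n!/(N)_n)\, h_n(x^{-1})\, h_n(x)$, so for $x,y \in \mathbb{T}^N$ one has $\sigma_n^{N-1}(xy^{-1}) = (n!/(N)_n)|h_n(xy^{-1})|^2$. Fix $x \in \mathbb{T}^N$ and $v \in V_\tau$ and set $q_{x,v}(y) := h_n(xy^{-1}) v$, a continuous $V_\tau$-valued function on $\mathbb{T}^N$. Unpacking the matrix integration convention~(\ref{intmat}) in $K_n(x) = \int_{\mathbb{T}^N} \sigma_n^{N-1}(xy^{-1})\, d\mu(y)$ yields $v^* K_n(x) v = \frac{n!}{(N)_n} \int_{\mathbb{T}^N} |h_n(xy^{-1})|^2\, v^*\, d\mu(y)\, v = \frac{n!}{(N)_n} \langle q_{x,v}, q_{x,v}\rangle_F$, which is nonnegative by positive-semidefiniteness of the Hermitian form $\langle \cdot,\cdot\rangle_F$ produced by the matrix-valued Bochner theorem.

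For (4), expand $f, g \in \mathcal{P}_\tau$ in the orthonormal basis of $V_\tau$ as $f(x) = \sum_\alpha x^\alpha v_\alpha$, $g(x) = \sum_\beta x^\beta w_\beta$ (finite sums). The integral $\int f^* K_n g\, dm$ picks out matching Fourier coefficients of $K_n$, producing $\sum_{\alpha,\beta} v_\alpha^* c^{(n)}_{m(\alpha,\beta)} A_{\alpha-\beta} w_\beta$, where $m(\alpha,\beta) = \tfrac12 \sum_i |\alpha_i-\beta_i|$ when $\alpha - \beta \in \boldsymbol{Z}_N$ and the summand vanishes otherwise. Since $c_m^{(n)} \to 1$ for each fixed $m$ and the sum is finite, the limit equals $\sum v_\alpha^* A_{\alpha-\beta} w_\beta = \langle f, g\rangle_{\mathbb{T}}$. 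To extend to $f, g \in C(\mathbb{T}^N; V_\tau)$, set $L_n(f,g) := \int f^* K_n g\, dm$. By (1), $L_n$ is a positive Hermitian form; the constant term of $K_n$ is $A_0 = I_{V_\tau}$, so $\int \mathrm{tr}(K_n)\, dm = \dim V_\tau$. Combining the pointwise PSD inequality $f^* K_n f \leq \|f\|_\infty^2 \mathrm{tr}(K_n)$ with Cauchy--Schwarz yields a uniform bound $|L_n(f,g)| \leq (\dim V_\tau)\|f\|_\infty \|g\|_\infty$. Density of Laurent polynomials in $C(\mathbb{T}^N; V_\tau)$ (Stone--Weierstrass) then propagates convergence from $\mathcal{P}_\tau$ to all continuous $f, g$, with the limit inheriting positivity and Hermitian symmetry.

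The only nontrivial step is the factorization underlying (1), which is exactly the content of the theorem immediately preceding: once $\sigma_n^{N-1} = (\mathrm{const})\,|h_n|^2$ is in hand, the matrix-valued Bochner form $\langle \cdot,\cdot\rangle_F$ applies directly. Everything else is standard functional-analytic bookkeeping.
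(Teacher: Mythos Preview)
Your proof is correct, and for parts~(2) and~(3) it coincides with the paper's argument. For parts~(1) and~(4) you take slightly different tactical routes.

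For~(1), the paper cites the general result Theorem~\ref{posdefpt} (any nonnegative $\sigma\in C(\mathbb{T}^N)$ convolved with $\mu$ yields a pointwise positive-semidefinite matrix function), whose proof requires a bump-function localization. You instead exploit the specific factorization $\sigma_n^{N-1}=c\,|h_n|^2$ to exhibit $v^*K_n(x)v$ directly as the value $\langle q_{x,v},q_{x,v}\rangle_F$ of the Bochner form on the continuous function $q_{x,v}(y)=h_n(xy^{-1})v$; this is shorter for this particular kernel but does not give the general statement.

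For~(4), the paper introduces the averaging projection $\rho\colon C(\mathbb{T}^N)\to C(\mathbb{T}^N/\mathbb{D})$ over the diagonal subgroup, replaces $f\otimes g^\ast$ by $\rho(f\otimes g^\ast)$ (legitimate since $\widehat{\mu}$ is supported on $\boldsymbol{Z}_N$), and then applies the approximate-identity convergence of Proposition~\ref{appxC} directly. Your route instead computes the limit explicitly on Laurent polynomials via Fourier coefficients, establishes the uniform bound $|L_n(f,g)|\le(\dim V_\tau)\|f\|_\infty\|g\|_\infty$ from the pointwise estimate $f^*K_nf\le\|f\|_\infty^2\operatorname{tr}K_n$ together with $\int\operatorname{tr}K_n\,\mathrm{d}m=\operatorname{tr}A_{\mathbf 0}=\dim V_\tau$, and extends by density. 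Both arguments are standard; yours avoids the projection~$\rho$ at the price of the separate trace bound, while the paper's version keeps the approximate-identity structure in the foreground.
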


\begin{proof}
Part (1) is a consequence of Theorem~\ref{posdefpt}. Parts~(2)
and~(3) follow from the properties of $H_{m}$ in Proposition \ref{Hprops}. For
part~(4) there is an intermediate step of averaging over the diagonal group~$\mathbb{D}$. Def\/ine the operator
$\rho\colon C\big( \mathbb{T}^{N}\big)
\rightarrow C\big( \mathbb{T}^{N}/\mathbb{D}\big) $ by
\begin{gather*}
\rho(p) (x) :=\frac{1}{2\pi}\int_{-\pi}^{\pi
}p\big( e^{\mathrm{i}\theta}x\big) \mathrm{d}\theta, \qquad p\in C\big(
\mathbb{T}^{N}\big).
\end{gather*}
Clearly $ \Vert \rho(p) \Vert _{\infty}\leq \Vert
p \Vert _{\infty}$; in ef\/fect $\rho$ is the projection onto Fourier
series supported by~$\boldsymbol{Z}_{N}$. Then
\begin{gather*}
\begin{split}
& \int_{\mathbb{T}^{N}}p(x) \mathrm{d}\mu_{T,T^{\prime}} (
x ) =\int_{\mathbb{T}^{N}}\rho(p) (x)
\mathrm{d}\mu_{T,T^{\prime}}(x) ,\\
& \int_{\mathbb{T}^{N}}p(x) ( K_{n}(x)
) _{T,T^{\prime}}\mathrm{d}m(x) =\int_{\mathbb{T}^{N}}\rho(p) (x) ( K_{n}(x)
 ) _{T,T^{\prime}}\mathrm{d}m(x) , \qquad T,T^{\prime}\in\mathcal{Y}(\tau) .
 \end{split}
\end{gather*}
To extend this to the form $ \langle \cdot,\cdot \rangle
_{\mathbb{T}}$ express the typical sum $\sum\limits_{i,j=1}^{n}\overline{f_{i}}
B_{ij}g_{j}=\operatorname{tr}\big( ( f\otimes g^{\ast} ) ^{\ast}B\big) $
where $\operatorname{tr}$ denotes the trace and $( f\otimes g^{\ast})
_{ij}=f_{i}\overline{g_{j}}$ ($1\leq i,j\leq n$). Then ($\rho$ is applied to
matrices entry-wise) for $f,g\in C\big( \mathbb{T}^{N};V_{\tau}\big) $
\begin{gather*}
\int_{\mathbb{T}^{N}}f(x) ^{\ast}\mathrm{d}\mu(x)
g(x) =\int_{\mathbb{T}^{N}}\operatorname{tr}\big[ ( f(x) \otimes g(x) ^{\ast}) ^{\ast}\mathrm{d}\mu(x) \big]
 =\int_{\mathbb{T}^{N}}\operatorname{tr}\big[ \{ \rho ( f(x)
\otimes g(x) ^{\ast} ) \} ^{\ast}\mathrm{d}\mu(x) \big] ,\\
\int_{\mathbb{T}^{N}}f(x) ^{\ast}K_{n}(x) g(
x) \mathrm{d}m(x) =\int_{\mathbb{T}^{N}}\operatorname{tr}\big[
 ( f(x) \otimes g(x) ^{\ast} ) ^{\ast}K_{n}(x) \big] \mathrm{d}m(x) \\
 \hphantom{\int_{\mathbb{T}^{N}}f(x) ^{\ast}K_{n}(x) g(x) \mathrm{d}m(x)}{}
 =\int_{\mathbb{T}^{N}}\operatorname{tr} \big[ \{ \rho ( f(x)
\otimes g(x) ^{\ast} ) \} ^{\ast}K_{n} (
x ) \big] \mathrm{d}m(x) .
\end{gather*}
The convergence properties of Proposition~\ref{appxC} imply part~(4).
\end{proof}

\section{Recurrence relations}\label{section5}

As a simple illustration consider $\boldsymbol{Z}_{N,1}$ where it suf\/f\/ices to
f\/ind $A_{1,-1,0\ldots,0}$. Introduce the unit basis vectors $\varepsilon_{i}$
for $\mathbb{Z}^{N}$ (with $( \varepsilon_{i}) _{j}=\delta_{ij}$), so that $( 1,-1,0,\ldots) =\varepsilon_{1}-\varepsilon_{2}$. The
relation $( \varepsilon_{2}-\varepsilon_{1}) =( 1,2)
( \varepsilon_{1}-\varepsilon_{2}) $ implies $A_{\varepsilon
_{2}-\varepsilon_{1}}=\tau(1,2) A_{\varepsilon_{1}
-\varepsilon_{2}}\tau(1,2) =A_{\varepsilon_{1}-\varepsilon_{2}
}^{\ast}$. From $\langle x_{1}\mathcal{D}_{1}f,g\rangle
_{\mathbb{T}}=\langle f,x_{1}\mathcal{D}_{1}g\rangle _{\mathbb{T}}$
(Proposition \ref{torusform}(3)) we f\/ind
\begin{gather*}
x_{1}\mathcal{D}_{1} ( x_{1}\otimes T ) =x_{1}\otimes T+\kappa
x_{1}\sum_{j=2}^{N}\frac{x_{1}-( x(1,j)) _{1}}{x_{1}-x_{j}}\tau(1,j) T\\
\hphantom{x_{1}\mathcal{D}_{1} ( x_{1}\otimes T )}{}
 =x_{1}\otimes T+\kappa x_{1}\otimes\tau(\omega_{1})
T=x_{1}\otimes ( I+\kappa\tau(\omega_{1}) ) T,\\
x_{1}\mathcal{D}_{1} ( x_{2}\otimes T^{\prime} ) =\kappa
x_{1}\sum_{j=2}^{N}\frac{x_{2}-( x(2,j) ) _{1}}{x_{1}-x_{j}}\tau(1,j) T^{\prime}=-\kappa x_{1}\tau(1,2) T^{\prime};
\end{gather*}
recall the Jucys--Murphy elements $\omega_{i}:=\sum\limits_{j=i+1}^{N}(i,j) $ and the action $\tau(\omega_{i}) T=c(
i,T) T$ for \mbox{$T\in\mathcal{Y}(\tau) $}. Next the equation
$\langle x_{1}\mathcal{D}_{1}( x_{1}\otimes T) ,x_{2}\otimes
T^{\prime}\rangle _{\mathbb{T}}=\langle x_{1}\otimes T,x_{1}\mathcal{D}_{1}( x_{2}\otimes T^{\prime}) \rangle
_{\mathbb{T}}$ yields
\begin{gather*}
T^{\ast} ( I+\kappa\tau(\omega_{1}) ) ^{\ast
}A_{\varepsilon_{1}-\varepsilon_{2}}T^{\prime}=-\kappa T^{\ast}A_{\mathbf{0}}\tau(1,2) T^{\prime},
\end{gather*}
and $A_{\mathbf{0}}=I$. This holds for arbitrary $T$, $T^{\prime}$, and
$\tau(\omega_{1}) $ is diagonal with the entry at $(T,T) $ being $c(1,T) $ thus
\begin{gather*}
( I+\kappa\tau(\omega_{1}) ) A_{\varepsilon
_{1}-\varepsilon_{2}} =-\kappa\tau(1,2), \qquad
A_{\varepsilon_{1}-\varepsilon_{2}} =-\kappa ( I+\kappa\tau (\omega_{1})) ^{-1}\tau(1,2) ,
\end{gather*}
provided $\kappa c(1,T) \neq-1$ for all $T\in\mathcal{Y}(\tau)$.

\begin{Lemma}
For $\alpha\in\mathbb{N}_{0}^{N}$, $T\in\mathcal{Y}(\tau) $ and $1\leq i\leq N$
\begin{gather*}
\begin{split}
& x_{i}\mathcal{D}_{i}\big( x^{\alpha}\otimes T\big) =\alpha
_{i}x^{\alpha}\otimes T-\kappa\sum_{\alpha_{j}>\alpha_{i}}\sum_{\ell
=1}^{\alpha_{j}-\alpha_{i}}x^{\alpha+\ell ( \varepsilon_{i}-\varepsilon_{j} ) }\otimes\tau(i,j) T\\
& \hphantom{x_{i}\mathcal{D}_{i}\big( x^{\alpha}\otimes T\big) =}{}
+\kappa\sum_{\alpha_{i}>\alpha_{j}}\sum_{\ell=0}^{\alpha_{i}-\alpha_{j}%
-1}x^{\alpha+\ell( \varepsilon_{j}-\varepsilon_{i}) }\otimes\tau(i,j) T.
\end{split}
\end{gather*}
\end{Lemma}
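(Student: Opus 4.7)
The plan is to compute $x_{i}\mathcal{D}_{i}(x^{\alpha}\otimes T)$ by direct application of the definition of $\mathcal{D}_{i}$ and then organize the resulting terms according to the sign of $\alpha_{i}-\alpha_{j}$. First I would split $\mathcal{D}_{i}(x^{\alpha}\otimes T)=\partial_{i}(x^{\alpha})\otimes T+\kappa\sum_{j\neq i}\tau(i,j)\frac{x^{\alpha}-x^{(i,j)\alpha}}{x_{i}-x_{j}}\otimes T$, where $(i,j)\alpha$ swaps $\alpha_{i}$ and $\alpha_{j}$. Multiplying by $x_{i}$, the derivative term immediately produces $\alpha_{i}x^{\alpha}\otimes T$, which is the first summand in the statement.

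The nonlocal part then reduces to computing $x_{i}\bigl(x^{\alpha}-x^{(i,j)\alpha}\bigr)/(x_{i}-x_{j})$ for each $j\neq i$. The terms with $\alpha_{i}=\alpha_{j}$ vanish identically since $x^{\alpha}=x^{(i,j)\alpha}$, so only the two cases $\alpha_{j}>\alpha_{i}$ and $\alpha_{i}>\alpha_{j}$ contribute. In both cases, after factoring out $\prod_{k\neq i,j}x_{k}^{\alpha_{k}}\cdot x_{i}^{\min(\alpha_{i},\alpha_{j})}x_{j}^{\min(\alpha_{i},\alpha_{j})}$, the numerator becomes $\pm(x_{i}^{m}-x_{j}^{m})$ with $m=|\alpha_{i}-\alpha_{j}|$, and I apply the geometric identity $(x_{i}^{m}-x_{j}^{m})/(x_{i}-x_{j})=\sum_{\ell=0}^{m-1}x_{i}^{m-1-\ell}x_{j}^{\ell}$.

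When $\alpha_{j}>\alpha_{i}$, the numerator is $-(x_{i}^{m}-x_{j}^{m})$ times a common factor, producing an overall minus sign; after multiplying by $x_{i}$, each term in the geometric sum has the form $x^{\alpha+(m-\ell)(\varepsilon_{i}-\varepsilon_{j})}$ for $\ell=0,\ldots,m-1$, and the substitution $\ell\mapsto m-\ell$ rewrites this as $\sum_{\ell=1}^{\alpha_{j}-\alpha_{i}}x^{\alpha+\ell(\varepsilon_{i}-\varepsilon_{j})}$, matching the middle sum in the conclusion. When $\alpha_{i}>\alpha_{j}$ the sign is positive and no reindexing is needed: the monomials are directly $x^{\alpha+\ell(\varepsilon_{j}-\varepsilon_{i})}$ for $\ell=0,\ldots,\alpha_{i}-\alpha_{j}-1$, matching the third sum.

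The only real work is the bookkeeping of exponents after multiplying by $x_{i}$ and relabeling the summation index; there is no conceptual obstacle, since the representation $\tau(i,j)$ acts on the $V_{\tau}$ factor independently and is carried along as a scalar coefficient in each monomial.
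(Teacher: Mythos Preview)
Your proof is correct and follows exactly the approach indicated in the paper, which simply states that the result ``follows from performing the division in $\frac{x^{\alpha}-x^{(i,j)\alpha}}{x_{i}-x_{j}}$.'' You have merely supplied the explicit bookkeeping for that division and the subsequent multiplication by $x_{i}$, which the paper leaves to the reader.
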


\begin{proof}
This follows from performing the division in $\frac{x^{\alpha}-x^{(i,j) \alpha}}{x_{i}-x_{j}}$.
\end{proof}

\begin{Proposition}
For $\alpha,\beta\in\mathbb{N}_{0}^{N}$ such that $\vert \alpha
\vert = \vert \beta \vert $ and $1\leq i\leq N$
\begin{gather}
( \alpha_{i}-\beta_{i}) A_{\alpha-\beta}
 =\kappa\sum_{\alpha_{j}>\alpha_{i}}\sum_{\ell=1}^{\alpha_{j}-\alpha_{i}
}\tau(i,j) A_{\alpha+\ell ( \varepsilon_{i}-\varepsilon
_{j} ) -\beta}-\kappa\sum_{\alpha_{i}>\alpha_{j}}\sum_{\ell=0}^{\alpha_{i}-\alpha_{j}-1}\tau(i,j) A_{\alpha+\ell (
\varepsilon_{j}-\varepsilon_{i} ) -\beta}\nonumber\\
\hphantom{( \alpha_{i}-\beta_{i}) A_{\alpha-\beta}=}{}
 -\kappa\sum_{\beta_{j}>\beta_{i}}\sum_{\ell=1}^{\beta_{j}-\beta_{i}
}A_{\alpha-\ell ( \varepsilon_{i}-\varepsilon_{j} ) -\beta}
\tau(i,j) \nonumber\\
\hphantom{( \alpha_{i}-\beta_{i}) A_{\alpha-\beta}=}{}
+\kappa\sum_{\beta_{i}>\beta_{j}}\sum_{\ell=0}
^{\beta_{i}-\beta_{j}-1}A_{\alpha-\ell( \varepsilon_{j}-\varepsilon
_{i}) -\beta}\tau(i,j) . \label{A(a-b)}
\end{gather}
\end{Proposition}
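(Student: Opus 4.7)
The plan is to test the self-adjointness relation $\langle x_i \mathcal{D}_i f, g \rangle_{\mathbb{T}} = \langle f, x_i \mathcal{D}_i g \rangle_{\mathbb{T}}$ from Proposition~\ref{torusform}(3) against the pair $f = x^{\alpha} \otimes T$, $g = x^{\beta} \otimes T'$ with $T, T' \in \mathcal{Y}(\tau)$ arbitrary, and to read off the matrix identity by stripping off the $T, T'$.

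More precisely, I would first apply the preceding lemma to expand $x_i \mathcal{D}_i(x^{\alpha} \otimes T)$ and $x_i \mathcal{D}_i(x^{\beta} \otimes T')$ as finite sums of monomials of the form $x^{\alpha'} \otimes \tau(i,j) T$ (respectively $x^{\beta'} \otimes \tau(i,j) T'$), with the various index shifts $\alpha + \ell(\varepsilon_i - \varepsilon_j)$ and $\alpha + \ell(\varepsilon_j - \varepsilon_i)$ as prescribed. The constant term $\alpha_i x^{\alpha} \otimes T$ on the left and $\beta_i x^{\beta} \otimes T'$ on the right will produce the $(\alpha_i - \beta_i) A_{\alpha - \beta}$ on the LHS once the two sides are subtracted. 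For every other term I substitute $\langle x^{\alpha'} \otimes u, x^{\beta'} \otimes v\rangle_{\mathbb{T}} = (\langle T, T\rangle_0 \langle T', T'\rangle_0)^{1/2}\, u^{*} A_{\alpha' - \beta'} v$ (with $u, v$ being $T$, $T'$ or their images under $\tau(i,j)$, expressed in the orthonormal basis of~$V_{\tau}$). Using that $(i,j)$ is an involution and $\tau$ is real-orthogonal, $\tau(i,j)^{*} = \tau(i,j)$, so the factor $\tau(i,j)$ produced on the $\alpha$-side comes out \emph{to the left} of $A_{\cdot}$, while on the $\beta$-side it appears \emph{to the right}; this explains the asymmetric placement of $\tau(i,j)$ in the stated identity. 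Because $T$ and $T'$ range independently over $\mathcal{Y}(\tau)$, the scalar identity $T^{*}(\text{LHS})T' = T^{*}(\text{RHS})T'$ upgrades to the desired matrix equation.

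The computation is essentially bookkeeping; the only genuinely subtle point is making sure the $\ell$-ranges and the starting index match the statement. In the lemma the sum over $\alpha_j > \alpha_i$ runs from $\ell = 1$ to $\alpha_j - \alpha_i$ while the sum over $\alpha_i > \alpha_j$ runs from $\ell = 0$ to $\alpha_i - \alpha_j - 1$; these asymmetries, inherited from performing the division $\frac{x^{\alpha} - x^{(i,j)\alpha}}{x_i - x_j}$, propagate unchanged into the recurrence, so I just need to copy them carefully and not drop or shift an index by one. The sign pattern (the terms coming from the $\alpha$-side appear with the same signs as in the lemma, those from the $\beta$-side with opposite signs after being moved across the equality) completes the verification.
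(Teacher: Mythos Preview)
Your approach is exactly the paper's: the paper's proof consists of the single sentence that the identity follows from $\big\langle x_{i}\mathcal{D}_{i}(x^{\alpha}\otimes T),x^{\beta}\otimes T'\big\rangle_{\mathbb{T}}=\big\langle x^{\alpha}\otimes T,x_{i}\mathcal{D}_{i}(x^{\beta}\otimes T')\big\rangle_{\mathbb{T}}$ together with the preceding lemma. Your write-up simply spells out the bookkeeping (placement of $\tau(i,j)$ via $\tau(i,j)^{\ast}=\tau(i,j)$, ranges of $\ell$, and the passage from the scalar identity for all $T,T'$ to the matrix identity) that the paper leaves implicit.
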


\begin{proof}
The statement follows from the equation
\begin{gather*}
\big\langle x_{i}\mathcal{D}_{i}( x^{\alpha}\otimes T) ,x^{\beta}\otimes T^{\prime
}\big\rangle =\big\langle x^{\alpha}\otimes T,x_{i}\mathcal{D}_{i}\big(
x^{\beta}\otimes T^{\prime}\big) \big\rangle
\end{gather*}
and the lemma.
\end{proof}

The following is one of the main results of this section. Note that it is
important to involve the multiplicity of the f\/irst part of $\gamma$.

\begin{Theorem}
Suppose $\gamma\in\boldsymbol{Z}_{N,n}$ such that $\gamma^{\pi}$ is a
partition and $\gamma_{1}^{\pi}=\gamma_{m}^{\pi}>\gamma_{m+1}$ then
\begin{gather}
\left( \gamma_{1}I+\kappa\sum_{\ell=m+1}^{N}\tau(1,\ell)
\right) A_{\gamma}=-\kappa\sum_{j=m+1,\, \gamma_{j}\geq0}^{N}\sum_{\ell
=1}^{\gamma_{1}-\gamma_{j}-1}\tau(1,j) A_{\gamma+\ell(
\varepsilon_{j}-\varepsilon_{1}) }\nonumber\\
\qquad{}
-\kappa\sum_{j=m+1,\, \gamma_{j}<0}^{N}\left\{ \tau(1,j)
\sum_{\ell=1}^{\gamma_{1}-1}A_{\gamma+\ell ( \varepsilon_{j}
-\varepsilon_{1}) }+\sum_{\ell=1}^{-\gamma_{j}}A_{\gamma-\ell(
\varepsilon_{1}-\varepsilon_{j}) }\tau(1,j) \right\}.\label{eqnApart}
\end{gather}
Each of the coefficients $A_{\delta}$ appearing on the second line of the
equation satisfies $\delta\in\boldsymbol{Z}_{N,s}$ for some $s<n$ and for~$A_{\delta}$ on the right-hand side of the first line $\delta=\delta^{\pi}-\gamma^{\nu}$ where $\delta^{\pi}\vartriangleleft\gamma^{\pi}$.
\end{Theorem}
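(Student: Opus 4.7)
My plan is to specialize the general recurrence (\ref{A(a-b)}) to the pair $\alpha = \gamma^\pi$, $\beta = \gamma^\nu$ with $i = 1$, so that $\alpha - \beta = \gamma$ and $\alpha_1 - \beta_1 = \gamma_1^\pi - \gamma_1^\nu = \gamma_1$. The proof then reduces to organizing the four sums in (\ref{A(a-b)}) using the structural hypotheses on $\gamma$.

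Two of the four sums vanish automatically: because $\gamma^\pi$ is a partition with maximum entry $\gamma_1^\pi$, no index $j$ satisfies $\alpha_j > \alpha_1$ (eliminating the first sum), and because $\gamma_1^\pi > 0$ forces $\beta_1 = \gamma_1^\nu = 0$, no index $j$ satisfies $\beta_1 > \beta_j$ (eliminating the fourth sum). For the second sum, the condition $\alpha_1 > \alpha_j$ holds exactly for $j \in \{m+1, \ldots, N\}$ by the hypothesis $\gamma_1^\pi = \gamma_m^\pi > \gamma_{m+1}^\pi$, and I would split it according to the sign of $\gamma_j$: if $\gamma_j \geq 0$ then $\alpha_j = \gamma_j$ and $\ell$ runs over $0 \leq \ell \leq \gamma_1 - \gamma_j - 1$, while if $\gamma_j < 0$ then $\alpha_j = 0$ and $\ell$ runs over $0 \leq \ell \leq \gamma_1 - 1$. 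For the third sum, the condition $\beta_j > \beta_1 = 0$ forces $\gamma_j < 0$, hence again $j > m$, with $\ell$ ranging from $1$ to $-\gamma_j$.

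The key recombination is the $\ell = 0$ term of the second sum: both sub-cases contribute $-\kappa\,\tau(1,j) A_\gamma$, so summing them yields $-\kappa \sum_{j=m+1}^{N} \tau(1,j)\, A_\gamma$. Transposing this to the left-hand side of (\ref{A(a-b)}) converts $\gamma_1 A_\gamma$ into $\bigl(\gamma_1 I + \kappa \sum_{\ell=m+1}^{N} \tau(1,\ell)\bigr) A_\gamma$, matching the LHS of (\ref{eqnApart}); the surviving $\ell \geq 1$ contributions of the second sum together with the third sum assemble into the three terms on the RHS exactly as stated.

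It remains to verify the degree and dominance claims by a coordinate-wise computation. For a first-line coefficient $\delta = \gamma + \ell(\varepsilon_j - \varepsilon_1)$ with $\gamma_j \geq 0$ and $1 \leq \ell \leq \gamma_1 - \gamma_j - 1$, both $\delta_1 = \gamma_1 - \ell$ and $\delta_j = \gamma_j + \ell$ remain strictly positive and all other entries are unchanged, so $\delta^\nu = \gamma^\nu$ and $|\delta^\pi| = |\gamma^\pi|$; the relation $\delta^\pi \vartriangleleft \gamma^\pi$ then follows from the classical fact that transferring $\ell \geq 1$ units from a larger part of a partition to a strictly smaller part decreases it in dominance order, since the admissible range preserves $\gamma_1 - \ell > \gamma_j + \ell$. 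For a second-line coefficient with $\gamma_j < 0$, a short case split on $\ell \leq -\gamma_j$ versus $\ell > -\gamma_j$ yields $\sum_i |\delta_i| = 2n - 2\min(\ell, -\gamma_j) < 2n$, hence $\delta \in \boldsymbol{Z}_{N,s}$ for some $s < n$. The only real pitfall is keeping the four sums and two sign-cases straight; once the two vanishings and the $\ell = 0$ recombination are handled correctly, the remainder is routine bookkeeping.
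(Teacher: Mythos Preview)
Your approach is essentially identical to the paper's: specialize (\ref{A(a-b)}) at $i=1$, $\alpha=\gamma^\pi$, $\beta=\gamma^\nu$, observe that the sums over $\alpha_j>\alpha_1$ and $\beta_1>\beta_j$ are empty, and absorb the $\ell=0$ contributions of the remaining $\alpha_1>\alpha_j$ sum into the left-hand side. The bookkeeping and the degree count for the second-line terms also match the paper's argument.

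One small correction is needed in your dominance step. The assertion that ``the admissible range preserves $\gamma_1-\ell>\gamma_j+\ell$'' fails as soon as $\ell>(\gamma_1-\gamma_j)/2$ (e.g.\ $\gamma_1=5$, $\gamma_j=0$, $\ell=3$ gives $2<3$), so you cannot conclude $\delta^\pi\vartriangleleft\gamma^\pi$ directly from a single Robin~Hood transfer between ordered parts. What is actually true, and what the paper invokes via \cite[Lemma~10.1.3]{Dunkl/Xu2014}, is the stronger statement $(\gamma^\pi+\ell(\varepsilon_j-\varepsilon_1))^{+}\prec\gamma^\pi$ for every $1\le\ell\le\gamma_1-\gamma_j-1$: one must compare the \emph{rearranged} composition to $\gamma^\pi$, and the argument has to accommodate the other parts $\gamma_2,\ldots,\gamma_m$ of the partition as well. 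With that lemma in hand your proof goes through unchanged.
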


\begin{proof}
The formula follows from equation~(\ref{A(a-b)}) by setting $i=1$, $\beta
_{1}=0$ and omitting the case $\alpha_{j}>\alpha_{i}$. Suppose $\gamma
_{j}^{\nu}=0$ for $j\leq k$, and $\gamma_{j}^{\pi}=0$ for $j>k$. The typical
multi-index in the second line is
\begin{gather*}
\big( \gamma_{1}-\ell,\gamma_{2}^{\pi},\ldots,\gamma_{k}^{\pi},-\gamma
_{k+1}^{\nu},\ldots,-\gamma_{j}^{\nu}+\ell,\ldots\big)
\end{gather*}
with $1\leq\ell\leq\gamma_{1}-1$ or $1\leq\ell\leq\gamma_{j}^{\nu}$. If
$\gamma_{1}\geq\ell$ then the sum of the nonnegative components is $n-\ell<n$;
and if $\gamma_{1}<\ell$ (possible if $\gamma_{j}^{\nu}>\gamma_{1}$) then the
sum of the nonnegative components is $n-\gamma_{1}$. In both cases the
multi-index is in $\bigcup\limits_{s<n}\boldsymbol{Z}_{N,s}$. From~\cite[Lemma~10.1.3]{Dunkl/Xu2014} $\gamma^{\pi}\succ ( \gamma^{\pi}+\ell ( \varepsilon
_{j}-\varepsilon_{1} ) ) ^{+}$ for $1\leq\ell\leq\gamma
_{1}-\gamma_{j}^{\pi}-1$, thus the multi-indices on the right-hand side of the
f\/irst line satisfy $\delta=\delta^{\pi}-\gamma^{\nu}$ with $(\delta^{\pi}) ^{+}\prec\gamma^{\pi}$, that is, $\delta^{\pi
}\vartriangleleft\gamma^{\pi}$.
\end{proof}

If $\gamma$ is $\vartriangleright$-minimal with f\/ixed $\gamma^{\nu}$ then
there are no terms on the right side of the f\/irst line (that is, $\gamma
_{j}\geq0$ implies $\gamma_{1}\geq\gamma_{j}\geq\gamma_{1}-1$).

\begin{Proposition}\label{mingam0} Among $\gamma\in\boldsymbol{Z}_{N,n}$ such that $\gamma^{\nu
}=\beta$ for some fixed $\beta$ with $ \vert \beta \vert =n$ and such
that $\beta_{j}>0$ exactly when $j>k$ the minimal multi-index for the order
$\gamma^{(1) \pi}\vartriangleright\gamma^{(2) \pi
}$ is $\gamma^{(0) }= \big( p+1,\ldots,\overset{(m) }{p+1},p,\ldots,\overset{(k) }{p},-\beta_{k+1}
,\ldots,-\beta_{N} \big) $ where $p=\big\lfloor \frac{n}{k}\big\rfloor $
and $m=n-kp$ $($so $0\leq m<k)$. For this multi-index the right-hand side of~\eqref{eqnApart} contains only $A_{\delta}$ with $\delta\in\bigcup
\limits_{s=0}^{n-1}\boldsymbol{Z}_{N,s}$.
\end{Proposition}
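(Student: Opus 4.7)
The plan is to treat the two assertions of the proposition separately: first a purely combinatorial characterization of the minimum in $\vartriangleright$ among multi-indices with fixed $\gamma^{\nu}$, then a direct substitution into \eqref{eqnApart}.

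For the first assertion, I first observe that the hypothesis $\gamma^{\nu}=\beta$ with $\beta_{j}>0$ exactly when $j>k$ forces $\gamma_{j}\geq 0$ for $j\leq k$ and $\gamma_{j}=-\beta_{j}<0$ for $j>k$. Consequently $\gamma^{\pi}$ is a partition of $n$ with at most $k$ nonzero parts, and on multi-indices with fixed $\gamma^{\nu}=\beta$ the order $\vartriangleleft$ reduces to the dominance order $\prec$ on $\gamma^{\pi}$. It then suffices to show that, among partitions of $n$ into at most $k$ parts, the unique $\prec$-minimum is the balanced partition $\lambda^{(0)}=(p+1,\ldots,p+1,p,\ldots,p)$ with $m$ copies of $p+1$ and $k-m$ copies of $p$. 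This is classical: any partition $\lambda\neq\lambda^{(0)}$ with at most $k$ parts must contain indices $i<j$ with $\lambda_{i}-\lambda_{j}\geq 2$, and the standard swap $\lambda\mapsto\lambda-\varepsilon_{i}+\varepsilon_{j}$ (re-sorted into a partition) produces a strictly $\prec$-smaller partition; iterating drives $\lambda$ down to $\lambda^{(0)}$.

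For the second assertion, I apply \eqref{eqnApart} to $\gamma=\gamma^{(0)}$. Denote by $m^{\ast}$ the multiplicity of the largest part of $\gamma^{(0)\pi}$, so $m^{\ast}=m$ when $m\geq 1$ and $m^{\ast}=k$ when $m=0$. The first-line sum on the right-hand side of \eqref{eqnApart} runs over indices $j>m^{\ast}$ with $\gamma_{j}^{(0)}\geq 0$ and over $1\leq\ell\leq\gamma_{1}-\gamma_{j}^{(0)}-1$. For $j>k$ we have $\gamma_{j}^{(0)}=-\beta_{j}<0$, so such $j$ are excluded from that sum. For $m^{\ast}<j\leq k$, which can only occur when $m\geq 1$, we have $\gamma_{j}^{(0)}=p$ and $\gamma_{1}-\gamma_{j}^{(0)}-1=(p+1)-p-1=0$, so the inner $\ell$-range is empty. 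In either branch the entire first-line sum vanishes, and all remaining terms come from the second line, which by the previous theorem involves only $A_{\delta}$ with $\delta\in\bigcup_{s=0}^{n-1}\boldsymbol{Z}_{N,s}$.

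The only subtle point, which I do not expect to be a serious obstacle, is the clash of notation between the $m$ used in the statement of the proposition (the count of $(p+1)$-entries in $\gamma^{(0)}$) and the $m$ used in the hypothesis of the preceding theorem (the multiplicity of the first part of $\gamma^{\pi}$); these coincide except in the boundary case $k\mid n$, where the former is $0$ while the latter equals $k$. Introducing $m^{\ast}$ as above handles both cases uniformly, so the remaining work is essentially bookkeeping rather than a genuine difficulty.
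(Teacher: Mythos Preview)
Your proposal is correct, and your treatment of the second assertion (the direct check that the first-line sum in \eqref{eqnApart} is empty for $\gamma^{(0)}$, together with the $m$ versus $m^{\ast}$ bookkeeping) is exactly what the paper does implicitly via the remark preceding the proposition.

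For the first assertion your route differs from the paper's. The paper argues by induction on the number $k$ of available positive slots: assuming the balanced partition is $\prec$-minimal among partitions of length $\leq k$, it deduces the same for $k+1$ by comparing $p_{1}=\lfloor n_{1}/k\rfloor$ and $p_{2}=\lfloor n_{2}/(k+1)\rfloor$ and splitting into the cases $p_{1}=p_{2}$ and $p_{1}>p_{2}$. You instead invoke the classical covering-relation argument in the dominance lattice: any non-balanced partition admits a ``Robin Hood'' move $\lambda\mapsto\lambda-\varepsilon_{i}+\varepsilon_{j}$ with $\lambda_{i}-\lambda_{j}\geq 2$ that strictly decreases it in $\prec$, so iterating lands at $\lambda^{(0)}$. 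Your argument is shorter and more conceptual; the paper's induction is more self-contained and avoids appealing to the (admittedly standard) fact that such a move, once re-sorted, really is $\prec$-smaller. One small point: your sentence ``consequently $\gamma^{\pi}$ is a partition of $n$'' does not follow from $\gamma^{\nu}=\beta$ alone (the positive part could be any composition supported on $\{1,\ldots,k\}$); you are tacitly restricting to those $\gamma$ with $\gamma^{\pi}$ nonincreasing, which is exactly the setting of the preceding theorem and is also what the paper's appendix proof does. With that understood, your reduction of $\vartriangleleft$ to $\prec$ is fine.
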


The proof is technical and is presented as Proposition~\ref{mingam}.

\begin{Theorem}
The coefficients $A_{\alpha}$ are rational functions of $\kappa$ and are
finite provided
\begin{gather*}
\kappa\notin\left\{ -\frac{m}{c}\colon m,c\in\mathbb{N},\, 1\leq
c\leq\tau_{1}-1\right\} \cup\left\{ \frac{m}{c}\colon m,c\in\mathbb{N},\, 1\leq
c\leq\ell(\tau) -1\right\} .
\end{gather*}
 Also $A_{-\alpha}=A_{\alpha
}^{\ast}$ and $\tau(w) ^{\ast}A_{w\alpha}\tau(w)
=A_{\alpha}$ for all $\alpha\in\boldsymbol{Z}_{N}$, $w\in\mathcal{S}_{N}$ and
permitted values of $\kappa$.
\end{Theorem}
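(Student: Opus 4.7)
My plan is to argue by induction on $n$ such that $\alpha\in\boldsymbol{Z}_{N,n}$, with base case $A_{\mathbf{0}}=I$. First I would exploit the symmetry $A_{w\gamma}=\tau(w)A_\gamma\tau(w^{-1})$ from the earlier theorem to reduce, for each fixed $\gamma^\nu$, to the case that $\gamma^\pi$ is a partition. Then, for fixed $\gamma^\nu$, I would perform a secondary induction on $\gamma^\pi$ in the order $\vartriangleleft$, starting from the $\vartriangleleft$-minimal multi-index $\gamma^{(0)}$ of Proposition~\ref{mingam0}: for $\gamma^{(0)}$ the right-hand side of~\eqref{eqnApart} involves only $A_\delta$ with $|\delta|<2n$, and for a general $\gamma$ only those together with $A_\delta$ having $\delta^\pi\vartriangleleft\gamma^\pi$. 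All such $A_\delta$ are known by induction to be rational in $\kappa$ with poles in the stated set, so the step reduces to solving $M_\gamma A_\gamma=R_\gamma$, where
\begin{gather*}
M_\gamma:=\gamma_1 I+\kappa\sum_{\ell=m+1}^N\tau(1,\ell)
\end{gather*}
and $R_\gamma$ is a $\mathbb{Q}(\kappa)$-linear combination of previously determined terms.

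The crux is the invertibility of $M_\gamma$. Set $S:=\sum_{\ell=m+1}^N\tau(1,\ell)$. Since $S$ commutes with the Young subgroup $\mathcal{S}_{\{2,\ldots,m\}}\times\mathcal{S}_{\{m+1,\ldots,N\}}$ acting on $V_\tau$, one may decompose $V_\tau$ into joint $S$-eigenspaces. Combined with the identity $S=\tau(\omega_1)-\sum_{\ell=2}^m\tau(1,\ell)$ and the uniform content bound $1-\ell(\tau)\leq c(i,T)\leq\tau_1-1$, the eigenvalues of $S$ are integers in $[-(\ell(\tau)-1),\tau_1-1]$. Consequently $M_\gamma$ is singular precisely when $\kappa=-\gamma_1/c$ for some nonzero eigenvalue $c$ of $S$. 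As $\gamma_1$ runs over positive integers, the positive values of $c$ (bounded by $\tau_1-1$) contribute the set $\{-m/c:1\leq c\leq\tau_1-1\}$ and the negative values of $c$ (whose absolute value is bounded by $\ell(\tau)-1$) contribute $\{m/c:1\leq c\leq\ell(\tau)-1\}$, which together form exactly the stated excluded set. The identities $A_{-\alpha}=A_\alpha^{\ast}$ and $\tau(w)^{\ast}A_{w\alpha}\tau(w)=A_\alpha$ are restatements of the previously established symmetries of the $A$-coefficients; they propagate through the induction because both sides of~\eqref{eqnApart} are equivariant under $w\in\mathcal{S}_N$ and under $\gamma\mapsto-\gamma$ combined with Hermitian conjugation.

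The main obstacle is the spectral bound on the partial sum $S$. For $m=1$ it reduces to $S=\tau(\omega_1)$, whose spectrum is exactly the set of contents $\{c(1,T):T\in\mathcal{Y}(\tau)\}\subseteq[1-\ell(\tau),\tau_1-1]$; but for $m>1$ the partial sum is no longer a Jucys--Murphy element, and one needs a branching/Okounkov--Vershik analysis of $V_\tau$ restricted to $\mathcal{S}_{\{2,\ldots,m\}}\times\mathcal{S}_{\{m+1,\ldots,N\}}$ (or an induction on $m$ using $S=\tau(\omega_1)-\sum_{\ell=2}^m\tau(1,\ell)$) to verify the eigenvalues remain inside the same interval. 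Once the spectral bound is secured, rationality of $A_\alpha$, location of its poles, and the symmetry identities follow formally from the recurrence.
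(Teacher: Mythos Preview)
Your inductive skeleton matches the paper's: induct on $n$, reduce via $\mathcal{S}_N$-equivariance to $\gamma^\pi$ a partition, then use~\eqref{eqnApart} together with Proposition~\ref{mingam0} to express $A_\gamma$ in terms of lower $A_\delta$, so that the only issue is inverting $M_\gamma=\gamma_1 I+\kappa S$ with $S=\sum_{\ell=m+1}^N\tau(1,\ell)$.

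Where you diverge is precisely at the point you flag as ``the main obstacle,'' and here there is a genuine gap. Your proposed route to the spectrum of $S$ --- writing $S=\tau(\omega_1)-\sum_{\ell=2}^m\tau(1,\ell)$ and invoking branching for the Young subgroup $\mathcal{S}_{\{2,\dots,m\}}\times\mathcal{S}_{\{m+1,\dots,N\}}$ --- is unnecessarily hard, and as stated does not yield the bound: the two summands $\tau(\omega_1)$ and $\sum_{\ell=2}^m\tau(1,\ell)$ do \emph{not} commute, so you cannot simply subtract eigenvalue ranges. The paper bypasses all of this with a one-line conjugation. For $\ell>m$ one has $(1,m)(m,\ell)(1,m)=(1,\ell)$, hence
\[
\sum_{\ell=m+1}^N\tau(1,\ell)=\tau(1,m)\,\tau(\omega_m)\,\tau(1,m),
\]
so $S$ is conjugate to the Jucys--Murphy element $\tau(\omega_m)$, whose spectrum on $V_\tau$ is $\{c(m,T):T\in\mathcal{Y}(\tau)\}\subseteq[1-\ell(\tau),\tau_1-1]$. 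This immediately gives the eigenvalues of $M_\gamma$ as $\gamma_1+\kappa\,c(m,T)$ and pins down the excluded set exactly. Once you see this conjugation, no branching analysis is needed.

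For the symmetry identities $A_{-\alpha}=A_\alpha^{\ast}$ and $\tau(w)^{\ast}A_{w\alpha}\tau(w)=A_\alpha$, the paper argues slightly differently from you: these were already established for $\kappa$ in the interval $(-1/h_\tau,1/h_\tau)$, and since both sides are now known to be rational in $\kappa$, equality on an interval forces equality for all permitted $\kappa$. Your ``propagate through the induction'' idea would also work but requires checking that the recurrence respects both operations, which is more bookkeeping than the identity-principle argument.
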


\begin{proof}
The NSJP $\zeta_{\alpha,T}$ is a rational function of $\kappa$ with no poles
in $-1/h_{\tau}<\kappa<1/h_{\tau}$. The coef\/f\/icients $A_{\alpha}$ are def\/ined
in terms of all the NSJP's and are also rational in~$\kappa$. In equation~(\ref{eqnApart}) the operator on the left of~$A_{\gamma}$ is
\begin{gather*}
\left( \gamma_{1}I+\kappa\sum_{\ell=m+1}^{N}\tau(1,\ell)
\right) =\tau(1,m) ( \gamma_{1}I+\kappa\tau ( \omega_{m}) ) \tau(1,m) ,
\end{gather*}
where $\omega_{m}$ is the Jucys--Murphy element $\sum\limits_{\ell=m+1}^{N} (
m,\ell ) $; the action $\tau ( \omega_{m} ) T=c (
m,T ) T$ for all $T\in\mathcal{Y}(\tau) $ shows that the
eigenvalues of the operator are $ \{ \gamma_{1}+\kappa c (
m,T ) \colon T\in\mathcal{Y}(\tau) \} $ and the operator
is invertible provided $\kappa c ( m,T ) \notin \{
-1,-2,-3,\ldots \} $ for $1\leq m\leq N$. The set of values of~$c (m,T ) $ is $\{ j\in\mathbb{Z}\colon 1-\ell(\tau) \leq
j\leq\tau_{1}-1 \} $. Thus an inductive argument based on~$n$ in~$\boldsymbol{Z}_{N,n}$, the order in Proposition~\ref{mingam0}, and formula~(\ref{eqnApart}) shows there are unique solutions for~$\{ A_{\alpha
}\} $ provided that the possible poles at $n+\kappa c(
i,T) =0$ are excluded. The relations $A_{-\alpha}=A_{\alpha}^{\ast}$
and $\tau(w) ^{\ast}A_{w\alpha}\tau(w) =A_{\alpha
}$ hold at least in an interval hence for all $\kappa$, excluding the poles.
\end{proof}

The largest interval around $0$ without poles is $-\frac{1}{\tau_{1}-1}<\kappa<\frac{1}{\ell(\tau) -1}$. As illustration we describe~$A_{\gamma}$ for $\gamma\in\boldsymbol{Z}_{N,2}$. Above we showed
\begin{gather*}
A_{\varepsilon_{1}-\varepsilon_{j}}=-\kappa\ ( I+\kappa\tau(
\omega_{1} ) ) ^{-1}\tau(1,j) ,\qquad 2\leq j\leq N.
\end{gather*}
Next for $\alpha=\varepsilon_{1}+\varepsilon_{2}$ and $\beta=2\varepsilon_{j}$
for $3\leq j\leq N$ we f\/ind
\begin{gather*}
\left( I+\kappa\sum_{i=3}^{N}\tau(1,i) \right) A_{\varepsilon
_{1}+\varepsilon_{2}-2\varepsilon_{j}}=-\kappa ( A_{\varepsilon
_{2}-\varepsilon_{1}}+A_{\varepsilon_{2}-\varepsilon_{j}} ) \tau (1,j) .
\end{gather*}
For $\alpha=\varepsilon_{1}+\varepsilon_{2}$ and $\beta=\varepsilon
_{j}+\varepsilon_{j+1}$ with $3\leq j\leq N-1$
\begin{gather*}
\left( I+\kappa\sum_{i=3}^{N}\tau(1,i) \right) A_{\varepsilon
_{1}+\varepsilon_{2}-\varepsilon_{j}-\varepsilon_{j+1}}=-\kappa\big(
A_{\varepsilon_{2}-\varepsilon_{j}}\tau ( 1,j+1 ) +A_{\varepsilon
_{2}-\varepsilon_{j+1}}\tau(1,j) \big) .
\end{gather*}
For $\alpha=2\varepsilon_{1}$ and $\beta=2\varepsilon_{N}$ we obtain
\begin{gather*}
( 2I+\kappa\tau(\omega_{1})) A_{2\varepsilon
_{1}-2\varepsilon_{N}} \\
\qquad{}
 =-\kappa\left\{ \sum_{\ell=2}^{N-1}\tau(1,\ell)
A_{\varepsilon_{1}+\varepsilon_{\ell}-2\varepsilon_{N}}+\tau (
1,N) A_{\varepsilon_{1}-\varepsilon_{N}}+ ( A_{\varepsilon
_{1}-\varepsilon_{N}}+I ) \tau(1,N) \right\} .
\end{gather*}
The other coef\/f\/icients for $n=2$ are obtained using the relations
\begin{gather*}
A_{-\alpha
}=A_{\alpha}^{\ast} \qquad \text{and} \qquad \tau(w) ^{\ast}A_{w\alpha}\tau(
w) =A_{\alpha}.
\end{gather*}

\section{The dif\/ferential equation}\label{section6}

We will show that $\mu$ satisf\/ies a dif\/ferential system in a distributional
sense. Let $\mathbb{T}_{\rm reg}^{N}:=\mathbb{T}^{N}\backslash\bigcup
\limits_{1\leq i<j\leq N} \{ x\colon x_{i}=x_{j} \} $ (this avoids the
singularities of the system) and $\partial_{j}:=\frac{\partial}{\partial
x_{j}}$ for $1\leq j\leq N$. The system is
\begin{gather}
x_{i}\partial_{i}K(x) =\kappa\sum_{j\neq i}\frac{x_{j}}
{x_{i}-x_{j}}\tau(i,j) K(x) +\kappa K(x) \sum_{j\neq i}\tau(i,j) \frac{x_{i}}{x_{i}-x_{j}}, \qquad 1\leq i\leq N. \label{dsystK}
\end{gather}
Any solution of this system satisf\/ies $\sum\limits_{i=1}^{N}x_{i}\partial_{i}K\left(
x\right) =0$ and thus is homogeneous of degree zero.

The relation $\langle x_{i}\mathcal{D}_{i}f,g \rangle _{\mathbb{T}
}= \langle f,x_{i}\mathcal{D}_{i}g \rangle _{\mathbb{T}}$ extends to
$C^{(1) }\big( \mathbb{T}^{N};V_{\tau}\big) $, the
continuously dif\/fe\-ren\-tiab\-le $V_{\tau}$-valued functions, because Laurent
polynomials are dense in this space. We have shown
\begin{gather*}
\lim_{n\rightarrow\infty}\int_{\mathbb{T}^{N}}\big\{ ( x_{i}\mathcal{D}_{i}f(x) ) ^{\ast}K_{n}(x)
g(x) -f(x) ^{\ast}K_{n}(x)x_{i}\mathcal{D}_{i}g(x) \big\} \mathrm{d}m(x)=0.
\end{gather*}
Suppose $p,q\in C^{(1) }\big( \mathbb{T}^{N}\big) $ (scalar
$\mathbb{C}$-valued) then by periodicity $\int_{\mathbb{T}^{N}}\frac{\partial
}{\partial\theta_{j}}(pq) \mathrm{d}m=0$ thus
\begin{gather*}
\int_{\mathbb{T}^{N}}\left( \frac{\partial}{\partial\theta_{j}}p\right)
q\mathrm{d}m=-\int_{\mathbb{T}^{N}}p\left( \frac{\partial}{\partial\theta
_{j}}q\right) \mathrm{d}m.
\end{gather*}
Also from $\frac{\partial}{\partial\theta_{j}}f(x)
=\mathrm{i}e^{\mathrm{i}\theta_{j}}\partial_{j}f(x)
=\mathrm{i}x_{j}\partial_{j}f(x) $ and $\partial_{j}f^{\ast
}(x) =-x_{j}^{-2} ( \partial_{j}f ) ^{\ast}(x) $ we obtain
\begin{gather*}
\int_{\mathbb{T}^{N}} ( x_{j}\partial_{j}f(x) )
^{\ast}g(x) \mathrm{d}m =-\int_{\mathbb{T}^{N}}x_{j}
^{-1}\big({-}x_{j}^{2}\big) \partial_{j}f^{\ast}(x) g (x) \mathrm{d}m
 =\int_{\mathbb{T}^{N}}f^{\ast}(x) x_{j}\partial_{j}g (
x ) \mathrm{d}m.
\end{gather*}
That is, $x_{j}\partial_{j}$ is self-adjoint with respect to $\langle
f,g\rangle :=\int_{\mathbb{T}^{N}}f^{\ast}(x) g(
x) \mathrm{d}m$. The result extends to $C^{(1) }\big(
\mathbb{T}^{N};V_{\tau}\big) $.

Specialize to a closed $\mathcal{S}_{N}$-invariant subset $E\subset
\mathbb{T}_{\rm reg}^{N}$ which is the closure of its interior (for example
$E_{\varepsilon}=\big\{ x\in\mathbb{T}^{N}\colon \min_{i<j} \vert x_{i}
-x_{j}\vert \geq\varepsilon\big\} $ for $\varepsilon>0$), and let
$f,g\in C^{(1) }\big( \mathbb{T}^{N};V_{\tau}\big) $ have
supports contained in $E$, that is, $f(x) =0=g(x)
$ for $x\notin E$. For f\/ixed $f$, $g$, $i$ let
\begin{gather*}
I_{n} =\int_{\mathbb{T}^{N}}\big\{ ( x_{i}\mathcal{D}_{i}f (
x ) ) ^{\ast}K_{n}(x) g(x) -f (x ) ^{\ast}K_{n}(x) x_{i}\mathcal{D}_{i}g(x)\big\} \mathrm{d}m(x) \\
\hphantom{I_{n} }{}
 =\int_{\mathbb{T}^{N}}\big\{( x_{i}\partial_{i}f(x)
) ^{\ast}K_{n}(x) g(x) -f(x)^{\ast}K_{n}(x) x_{i}\mathcal{\partial}_{i}g(x)\big\} \\
\hphantom{I_{n}=}{}
 +\kappa\int_{\mathbb{T}^{N}}\sum_{j\neq i}\left( \tau(i,j)
x_{i}\frac{f(x) -f(x(i,j)) } {x_{i}-x_{j}}\right) ^{\ast}K_{n}(x) g(x)\mathrm{d}m(x) \\
\hphantom{I_{n}=}{}
 -\kappa\int_{\mathbb{T}^{N}}f(x) ^{\ast}K_{n}(
x) \sum_{j\neq i}\left( \tau(i,j) x_{i}\frac{g(x) -g(x(i,j)) }{x_{i}-x_{j}}\right)\mathrm{d}m(x) .
\end{gather*}
By using $\big( \frac{x_{i}}{x_{i}-x_{j}}) ^{\ast}=-\frac{x_{j}}{x_{i}-x_{j}}$, $\tau(i,j) ^{\ast}=\tau(i,j) $,
and rearranging the sums we obtain
\begin{gather*}
I_{n} =\int_{\mathbb{T}^{N}}\big\{( x_{i}\partial_{i}f(
x)) ^{\ast}K_{n}(x) g(x) -f(x) ^{\ast}K_{n}(x) x_{i}\mathcal{\partial}_{i}g(x) \big\} \mathrm{d}m(x) \\
\hphantom{I_{n} =}{}
 -\kappa\int_{\mathbb{T}^{N}}f(x) ^{\ast}\sum_{j\neq
i}\left\{ \frac{x_{j}}{x_{i}-x_{j}}\tau(i,j) K_{n} (
x ) +K_{n}(x) \tau(i,j) \frac{x_{i}}
{x_{i}-x_{j}}\right\} g(x) \mathrm{d}m(x) \\
\hphantom{I_{n} =}{}
 +\kappa\sum_{j\neq i}\int_{\mathbb{T}^{N}}\big\{
x_{j}f(x(i,j)) ^{\ast}\tau(i,j)
K_{n}(x) g(x)
+x_{i}f(x) ^{\ast}K_{n}(x) \tau(i,j)
g(x(i,j))
\big\} \frac{\mathrm{d}m(x) }{x_{i}-x_{j}}.
\end{gather*}
Each integral in the third line is f\/inite because $f$ and $g$ vanish on a
neighborhood of $\bigcup\limits_{1\leq i<j\leq N} \{ x\colon$ $x_{i}=x_{j} \} $. The
terms inside $ \{ \cdot \} $ are invariant under the change of
variable $x\mapsto x(i,j) $, because $\tau(i,j)
K_{n}(x(i,j)) =K_{n}(x) \tau(i,j) $, but the denominator $( x_{i}-x_{j}) $ changes
sign; thus the integrand is odd under~$(i,j) $ and the integral vanishes.

Because terms like $\tau(i,j) \frac{x_{i}}{x_{i}-x_{j}}g(x) $ are in $C^{(1) }\big( \mathbb{T}^{N};V_{\tau
}\big) $ (assumption on the support of~$g$) we can take the limit as
$n\rightarrow\infty$ in the second line. By the adjoint property of
$x_{i}\partial_{i}$ we f\/ind
\begin{gather*}
\int_{\mathbb{T}^{N}} ( x_{i}\partial_{i}f(x) )
^{\ast}K_{n}(x) g(x) \mathrm{d}m(x)
=\int_{\mathbb{T}^{N}}f(x) ^{\ast}x_{i}\mathcal{\partial}%
_{i} \{ K_{n}(x) g(x) \}\mathrm{d}m(x) \\
\qquad{}
=\int_{\mathbb{T}^{N}}f(x) ^{\ast}\big\{ K_{n} (
x ) x_{i}\mathcal{\partial}_{i}g(x) + ( x_{i}\partial_{i}K_{n}(x) ) g(x) \big\}
\mathrm{d}m(x) .
\end{gather*}
Thus the fact that $\lim\limits_{n\rightarrow\infty}I_{n}=0$ implies (recall the
matrix-valued integral notation~(\ref{intmat}))
\begin{gather}
 \lim_{n\rightarrow\infty}\int_{\mathbb{T}^{N}}f(x) ^{\ast
} ( x_{i}\partial_{i}K_{n}(x) ) g(x)
\mathrm{d}m(x)\nonumber\\
 \qquad{} =\kappa\int_{\mathbb{T}^{N}}f(x) ^{\ast}\sum_{j\neq i}%
\frac{1}{x_{i}-x_{j}}\big\{ x_{j}\tau(i,j) \mathrm{d}\mu(x) +\mathrm{d}\mu(x) \tau(i,j)
x_{i}\big\} g(x) .\label{bigdiff}
\end{gather}
This statement is valid for all $f,g\in C^{(1) }\big(
\mathbb{T}^{N};V_{\tau}\big) $ that vanish on a neighborhood of
$\bigcup\limits_{1\leq i<j\leq N} \{ x\colon$ $x_{i}=x_{j} \} $. The f\/irst
line of the equation can be written as
\begin{gather*}
 \lim_{n\rightarrow\infty}\int_{\mathbb{T}^{N}}\big\{ (
x_{i}\partial_{i}f(x) ) ^{\ast}K_{n}(x)
g(x) -f(x) ^{\ast}K_{n}(x)
x_{i}\mathcal{\partial}_{i}g(x) \big\} \mathrm{d}m (
x ) \\
\qquad{} =\int_{\mathbb{T}^{N}}\big\{ ( x_{i}\partial_{i}f(x)
) ^{\ast}\mathrm{d}\mu(x) g(x) -f (
x ) ^{\ast}\mathrm{d}\mu(x) x_{i}\mathcal{\partial}%
_{i}g(x) \big\},
\end{gather*}
which expresses the distributional derivative of~$\mathrm{d}\mu$. Thus the
distribution-sense dif\/ferential system is satisf\/ied by~$\mathrm{d}\mu$ on
closed subsets of $\mathbb{T}_{\rm reg}^{N}$.

In the scalar case ($\tau=(N) $) the orthogonality weight is
known to be (due to~\cite{Beerends/Opdam1993} for the symmetric Jack polynomials)
\begin{gather*}
K(x) =\prod\limits_{1\leq i<j\leq N}\big\{ ( x_{i}
-x_{j} ) \big( x_{i}^{-1}-x_{j}^{-1}\big) \big\} ^{\kappa}
\end{gather*}
and the dif\/ferential equation system reduces to (note $\tau(w)=1$)
\begin{gather*}
x_{i}\partial_{i}K(x) =\kappa K(x) \sum_{j\neq
i}\frac{x_{i}+x_{j}}{x_{i}-x_{j}}, \qquad 1\leq i\leq N.
\end{gather*}

The dif\/ferential system~(\ref{dsystK}) could be the subject for an article all
by itself, but we can sketch a result about the absolutely continuous part of
$\mu$, namely that $\mathrm{d}\mu(x) =L(x) ^{\ast
}BL(x) \mathrm{d}m(x)$, $x\in\mathbb{T}_{\rm reg}^{N}$
where $B$ is a locally constant positive matrix and~$L(x) $ is a~fundamental solution of
\begin{gather}
\partial_{i}L(x) =\kappa L(x) \left\{\sum_{j\neq i}\frac{1}{x_{i}-x_{j}}\tau(i,j) -\frac{\gamma
}{x_{i}}I\right\} , \qquad 1\leq i\leq N,\label{Lsys}\\
\gamma :=\frac{1}{2N}\sum_{i=1}^{\ell(\tau) }\tau_{i} (\tau_{i}-2i+1 ) =\frac{1}{N}\sum_{j=1}^{N}c ( j,T_{0} ).\nonumber
\end{gather}
The ef\/fect of the term $\frac{\gamma}{x_{i}}I$ is to make $L(x)
$ homogeneous of degree zero, that is, $\sum\limits_{i=1}^{N}x_{i}\partial
_{i}L(x) =0$, because
\begin{gather*}
\sum\limits_{1\leq i<j\leq N}\tau(i,j) =\left\{ \sum\limits_{j=1}^{N}c( j,T_{0}) \right\}I
\end{gather*} (the sum of the contents in the diagram of $\tau$).

The dif\/ferential system is Frobenius integrable; this means that in the system
$\partial_{i}L(x) =\kappa L(x) M_{i}(x)$, $1\leq i\leq N$, the two formal dif\/ferentiations
\begin{gather*}
\partial_{j}\partial_{i}L(x) =\kappa^{2}L(x) M_{j}(x) M_{i}(x) +\kappa L(x)\partial_{j}M_{i}(x) ,\\
\partial_{i}\partial_{j}L(x) =\kappa^{2}L(x)M_{i}(x) M_{j}(x) +\kappa L(x)\partial_{i}M_{j}(x) ,
\end{gather*}
are equal to each other for all $i$, $j$ (see \cite{Dunkl1993}). The system is
analytic and thus any local solution can be continued analytically to any
point in $\mathbb{C}_{\rm reg}^{N}:= ( \mathbb{C}\backslash \{
0 \} ) ^{N}\backslash\bigcup\limits_{1\leq i<j\leq N} \{
x\colon x_{i}=x_{j} \} $. When restricted to $\mathbb{T}_{\rm reg}^{N}$ there are
solutions def\/ined on each connected component. This is possible because
$L(x) $ is constant on $ \{ ux\colon \vert u \vert
=1 \} $ for f\/ixed $x\in\mathbb{T}_{\rm reg}^{N}$ and each component is
homotopic to a circle. Denote the component containing all the points
$ \big\{ \big( e^{\mathrm{i}\theta_{1}},\ldots,e^{\mathrm{i}\theta_{N}
}\big) \colon -\pi<\theta_{1}<\theta_{2}<\cdots<\theta_{N}<\pi\big\} $ by
$\mathcal{C}_{0}$. Since $x$ and $ux$ are in the same component for
$ \vert u \vert =1$ we see that $x_{0}\in\mathcal{C}_{0}$ (recall
$x_{0}=\big( 1,\omega,\ldots,\omega^{N-1}\big) $, $\omega=e^{2\pi
\mathrm{i}/N})$. The components are $ \{ \mathcal{C}_{0}w\colon w\in
\mathcal{S}_{N},\, w(1) =1 \} $ (corresponding to the
$( N-1) !$ circular permutations of $( 1,2,\ldots,N)
$). Now f\/ix the unique solution $L(x) $ such that $L(
x_{0}w) =I$ for each $w$ with $w(1) =1$. By
dif\/ferentiating $L(x) ^{-1}L(x) =I$ obtain the
system satisf\/ied by $L^{-1}$:
\begin{gather*}
\partial_{i}L(x) ^{-1}=-\kappa\bigg\{ \sum_{j\neq i}\frac
{1}{x_{i}-x_{j}}\tau(i,j) -\frac{\gamma}{x_{i}}I\bigg\}
L(x) ^{-1},\qquad 1\leq i\leq N.
\end{gather*}
The goal here is to replace $f$, $g$ in formula~(\ref{bigdiff}) by $L^{-1}f$, $L^{-1}g$ and deduce the desired result. By use of $x_{i}\partial_{i}(
L^{-1\ast}) =-\big( x_{i}\partial_{i}L^{-1}\big) ^{\ast}$ we obtain
\begin{gather*}
 \big( x_{i}\partial_{i}\big( L(x) ^{-1\ast}\big)
\big) \mathrm{d}\mu(x) L(x) ^{-1}+L (
x ) ^{-1\ast}\mathrm{d}\mu(x) \big( x_{i}\partial
_{i}L(x) ^{-1}\big) \\
 \qquad{} =\kappa L^{-1\ast}\bigg\{ \sum_{j\neq i}\frac{-x_{j}}{x_{i}-x_{j}}
\tau(i,j) -\gamma I\bigg\} \mathrm{d}\mu L^{-1}
 -\kappa L^{-1\ast}\mathrm{d}\mu\bigg\{ \sum_{j\neq i}\frac{x_{i}}
{x_{i}-x_{j}}\tau(i,j) -\gamma I\bigg\} L^{-1}\\
\qquad{} =-\kappa L^{-1\ast}\sum_{j\neq i}\left\{ \frac{x_{j}}{x_{i}-x_{j}}
\tau(i,j) \mathrm{d}\mu+\mathrm{d}\mu\tau(i,j)\frac{x_{i}}{x_{i}-x_{j}}\right\} L^{-1}.
\end{gather*}
Substitute this relation in formula (\ref{bigdiff})
\begin{gather*}
 \lim_{n\rightarrow\infty}\int_{\mathbb{T}^{N}}f(x) ^{\ast
}L(x) ^{-1\ast} ( x_{i}\partial_{i}K_{n}(x)
 ) L(x) ^{-1}g(x) \mathrm{d}m (x) \\
\qquad{} +\int_{\mathbb{T}^{N}}f(x) ^{\ast} ( x_{i}\partial
_{i} ) \big( L(x) ^{-1\ast}\big) \mathrm{d}\mu (x) L(x) ^{-1}g(x) \\
\qquad{} +\int_{\mathbb{T}^{N}}f(x) ^{\ast}L(x)
^{-1\ast}\mathrm{d}\mu(x) x_{i}\partial_{i}L(x)
^{-1}g(x) =0.
\end{gather*}
The formula is valid because $L^{-1}f,L^{-1}g\in C^{(1) }\big(
\mathbb{T}_{\rm reg}^{N};V_{\tau}\big) $ and vanish for $x\notin E$. The f\/irst
line of the equation is the distributional derivative of~$\mu$ so the equation is equivalent to
\begin{gather*}
\int_{\mathbb{T}^{N}}f(x) ^{\ast}\big[ x_{i}\partial
_{i}\big( L(x) ^{-1\ast}\mathrm{d}\mu(x) L(x) ^{-1}\big) \big] g(x) =0, \qquad 1\leq i\leq N.
\end{gather*}
Thus all the partial derivatives of the distribution $L^{-1\ast}\mathrm{d}\mu
L^{-1}$ vanish and $L^{-1\ast}\mathrm{d}\mu L^{-1}=B\mathrm{d}m$, where $B$ is
constant on each component of $\mathbb{T}_{\rm reg}^{N}$. We conclude
$\mathrm{d}\mu(x) =L(x) ^{\ast}BL(x)
\mathrm{d}m(x) $ for $x\in\mathbb{T}_{\rm reg}^{N}$. Part~(1) of
Theorem~\ref{propKn} implies~$B$ is a positive matrix.

We have to point out that this result provides no information about the
behavior of $\mu$ on the singular set $\bigcup\limits_{i<j} \{
x\colon x_{i}=x_{j} \} $. We conjecture that $\mu$ does not have a singular
part. This question seems a worthy topic for further investigations. Some of
the dif\/f\/iculties in this problem come from the behavior of the solutions of
system~(\ref{Lsys}) in neighborhoods of the sets $\{ x\colon x_{i}
=x_{j}\} $; there are singularities of order $ \vert x_{i}
-x_{j} \vert ^{\pm\kappa}$. Both signs appear because the eigenvalues of~$\tau(i,j) $ are~$1$ and~$-1$; a consequence of the assumption
that~$\tau$ is not one-dimensional.

\appendix
\section{Appendix}\label{appendixA}

\subsection{The matrix Bochner theorem}\label{BTheorem}

For a matrix $A\in M_{n}(\mathbb{C})$ the operator norm is%
\begin{gather*}
\left\Vert A\right\Vert :=\sup\{\vert Av \vert \colon \vert
v \vert =1 \} =\sup \{ \vert \langle u,Av \rangle \vert \colon \vert u\vert =1= \vert v \vert \} .
\end{gather*}

\begin{Proposition}
Suppose $F$ is a positive-definite $M_{n}(\mathbb{C}) $-valued
function on $\mathbb{Z}^{N}$ then
\begin{enumerate}\itemsep=0pt
\item[$(1)$] $F(\mathbf{0})$ is positive-definite,
\item[$(2)$] $F(-\alpha) =F(\alpha) ^{\ast}$ and $\Vert F(\alpha)\Vert \leq\Vert
F(\mathbf{0})\Vert $ for all $\alpha\in\mathbb{Z}^{N}$.
\end{enumerate}
\end{Proposition}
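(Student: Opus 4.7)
My plan is to use the defining positive-definiteness inequality with test functions supported on one or two points of $\mathbb{Z}^N$, which is the classical strategy for Bochner-type arguments.

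For part (1), I would plug in a point-mass test function $f$ with $f(\mathbf{0})=v$ and $f(\alpha)=0$ for $\alpha\ne \mathbf{0}$. The defining inequality collapses to $v^{\ast}F(\mathbf{0})v\ge 0$ for every $v\in\mathbb{C}^{n}$. Since this is real for all $v$, a standard polarization argument forces $F(\mathbf{0})^{\ast}=F(\mathbf{0})$, and then non-negativity of $v^{\ast}F(\mathbf{0})v$ gives positive semi-definiteness.

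For the Hermitian symmetry in (2), I would take a two-point test function supported on $\{\mathbf{0},\alpha\}$ with $f(\mathbf{0})=u$ and $f(\alpha)=v$. Expanding the double sum yields
\begin{gather*}
u^{\ast}F(\mathbf{0})u+v^{\ast}F(\mathbf{0})v+u^{\ast}F(-\alpha)v+v^{\ast}F(\alpha)u\ge 0.
\end{gather*}
Writing $a=u^{\ast}F(-\alpha)v$ and $b=v^{\ast}F(\alpha)u$, reality of the sum forces $a+b\in\mathbb{R}$, and the same reasoning applied with $v$ replaced by $iv$ forces $a-b\in i\mathbb{R}$. Combining these two conditions gives $a=\bar b$, i.e., $u^{\ast}F(-\alpha)v=\overline{v^{\ast}F(\alpha)u}=u^{\ast}F(\alpha)^{\ast}v$ for all $u,v$, hence $F(-\alpha)=F(\alpha)^{\ast}$.

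For the norm bound, I would substitute this symmetry back into the two-point inequality to obtain
\begin{gather*}
u^{\ast}F(\mathbf{0})u+v^{\ast}F(\mathbf{0})v+2\operatorname{Re}(v^{\ast}F(\alpha)u)\ge 0.
\end{gather*}
Replacing $v$ by $e^{i\theta}v$ and choosing $\theta$ so that $e^{-i\theta}v^{\ast}F(\alpha)u=-|v^{\ast}F(\alpha)u|$ yields
\begin{gather*}
2|v^{\ast}F(\alpha)u|\le u^{\ast}F(\mathbf{0})u+v^{\ast}F(\mathbf{0})v.
\end{gather*}
Then the scaling trick $u\mapsto tu$, $v\mapsto t^{-1}v$ and minimizing over $t>0$ produces the sharper bound
\begin{gather*}
|v^{\ast}F(\alpha)u|\le\bigl(u^{\ast}F(\mathbf{0})u\bigr)^{1/2}\bigl(v^{\ast}F(\mathbf{0})v\bigr)^{1/2}\le\|F(\mathbf{0})\|\,|u|\,|v|,
\end{gather*}
with the degenerate case $u^{\ast}F(\mathbf{0})u=0$ handled by letting $t\to\infty$ or $t\to 0$ to conclude $v^{\ast}F(\alpha)u=0$. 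Taking the supremum over unit vectors $u,v$ yields $\|F(\alpha)\|\le\|F(\mathbf{0})\|$.

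The whole argument is essentially routine; the only mildly delicate step is the phase-and-scale optimization that turns the two-point inequality into the operator-norm bound, and ensuring the degenerate case $u^{\ast}F(\mathbf{0})u=0$ does not create a loophole. No machinery beyond the definition of positive-definiteness is required.
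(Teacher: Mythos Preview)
Your proposal is correct and follows essentially the same route as the paper: a one-point test function for part~(1) and a two-point test function supported on $\{\mathbf{0},\alpha\}$ for part~(2), extracting the Hermitian symmetry from the vanishing of the imaginary part and then the norm bound from the resulting real inequality. The only cosmetic difference is in the last step: the paper simply chooses unit vectors $u,v$ with $v^{\ast}F(\alpha)u=-\Vert F(\alpha)\Vert$ and bounds $u^{\ast}F(\mathbf{0})u+v^{\ast}F(\mathbf{0})v\le 2\Vert F(\mathbf{0})\Vert$ directly, whereas you first run a phase-and-scale optimization to obtain the sharper Cauchy--Schwarz-type estimate $|v^{\ast}F(\alpha)u|\le(u^{\ast}F(\mathbf{0})u)^{1/2}(v^{\ast}F(\mathbf{0})v)^{1/2}$ before taking the supremum; both are standard and either suffices.
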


\begin{proof}
Part (1) follows immediately from taking $f(\mathbf{0}) =u$,
$f(\alpha) =0$ for $\alpha\neq0$. For part (2) f\/ix $\alpha\neq0$
and let $f(\mathbf{0}) =u$, $f(\alpha) =v$ and
$f(\beta) =0$ otherwise. By def\/inition
\begin{gather*}
u^{\ast}F(\mathbf{0}) u+v^{\ast}F(\mathbf{0})
v+v^{\ast}F(\alpha) u+u^{\ast}F(-\alpha) v\geq0.
\end{gather*}
Thus $\operatorname{Im} ( v^{\ast}F(\alpha) u+u^{\ast
}F(-\alpha) v ) =0$ for all $u$, $v$. For $1\leq j,k\leq n$
let $u=\varepsilon_{j}$, $v=c\varepsilon_{k}$, $c\in\mathbb{C}$, then
\begin{gather*}
0=\operatorname{Im}\big( \overline{c}F(\alpha) _{kj}+cF (-\alpha) _{jk}\big) .
\end{gather*}
Set $c=1$ and $c=\mathrm{i}$ to show $F(-\alpha) _{jk}=\overline{F(\alpha) _{kj}}$, that is, $F(-\alpha)
=F(\alpha) ^{\ast}$. Thus $\overline{u^{\ast}F(-\alpha) v}=v^{\ast}F(\alpha) u$ and
\begin{gather*}
-2\operatorname{Re} ( v^{\ast}F(\alpha) u ) \leq u^{\ast}F(\mathbf{0}) u+v^{\ast}F(\mathbf{0}) v.
\end{gather*}
Specialize to vectors $u$, $v$ such that $\vert u\vert =1= \vert
v \vert $ and $v^{\ast}F(\alpha) u= \langle v,F (
\alpha ) u \rangle =- \Vert F(\alpha) \Vert
$. Since $F(\mathbf{0}) $ is positive-def\/inite it follows that
$u^{\ast}F(\mathbf{0}) u+v^{\ast}F(\mathbf{0})
v\leq2 \Vert F(\mathbf{0}) \Vert $, and so $ \Vert
F(\alpha) \Vert \leq \Vert F(\mathbf{0})\Vert $.
\end{proof}

\begin{Proposition}
Suppose $\alpha,\beta\in\mathbb{Z}^{N}$ and $\alpha,\beta\neq\mathbf{0}$ then
\begin{gather*}
\Vert F(\alpha) -F(\beta) \Vert
^{2}\leq2\Vert F(\mathbf{0}) \Vert \{ v^{\ast
}F(\mathbf{0}) v-\operatorname{Re}( v^{\ast}F(
\alpha-\beta) v) \} ,
\end{gather*}
where $\vert v\vert =1$ and $\vert ( F(
\alpha) -F(\beta) ) ^{\ast}v\vert
=\Vert F(\alpha) -F(\beta) \Vert $.
\end{Proposition}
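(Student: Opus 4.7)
The plan is to exploit the positive‑definiteness of $F$ by testing it against a finitely supported function concentrated on three points, and then extract the desired estimate through a Cauchy--Schwarz‑type argument built into the quadratic form.

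First I would introduce the positive‑semidefinite sesquilinear form
\begin{gather*}
\langle g,h\rangle_{F}:=\sum_{\gamma,\delta\in\mathbb{Z}^{N}}g(\gamma)^{\ast}F(\gamma-\delta)h(\delta)
\end{gather*}
on finitely supported $\mathbb{C}^{n}$‑valued functions on $\mathbb{Z}^{N}$; positive definiteness of $F$ is exactly the statement that $\langle g,g\rangle_{F}\geq 0$, so the standard polarization/Cauchy--Schwarz argument gives $|\langle g,h\rangle_{F}|^{2}\leq\langle g,g\rangle_{F}\langle h,h\rangle_{F}$.

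Next I would apply this with a specific choice designed to produce $F(\alpha)-F(\beta)$. Let $v\in\mathbb{C}^{n}$ with $|v|=1$ be the optimizing vector from the statement, let $u\in\mathbb{C}^{n}$ be arbitrary with $|u|=1$, and take
\begin{gather*}
g(\alpha)=v,\qquad g(\beta)=-v,\qquad g(\gamma)=0\ \text{otherwise};\qquad h(\mathbf{0})=u,\qquad h(\gamma)=0\ \text{otherwise}.
\end{gather*}
A direct expansion using $F(-\gamma)=F(\gamma)^{\ast}$ yields
\begin{gather*}
\langle g,h\rangle_{F}=v^{\ast}(F(\alpha)-F(\beta))u,\\
\langle g,g\rangle_{F}=2\big(v^{\ast}F(\mathbf{0})v-\operatorname{Re}(v^{\ast}F(\alpha-\beta)v)\big),\\
\langle h,h\rangle_{F}=u^{\ast}F(\mathbf{0})u\leq\|F(\mathbf{0})\|.
\end{gather*}

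Finally I would optimize over $u$. Writing $A:=F(\alpha)-F(\beta)$, the defining property of $v$ gives $|A^{\ast}v|=\|A\|$, so taking $u:=A^{\ast}v/\|A\|$ (which has $|u|=1$) produces $v^{\ast}Au=\|A\|$. Plugging into the Cauchy--Schwarz inequality $|\langle g,h\rangle_{F}|^{2}\leq\langle g,g\rangle_{F}\langle h,h\rangle_{F}$ gives exactly
\begin{gather*}
\|F(\alpha)-F(\beta)\|^{2}\leq 2\|F(\mathbf{0})\|\big\{v^{\ast}F(\mathbf{0})v-\operatorname{Re}(v^{\ast}F(\alpha-\beta)v)\big\}.
\end{gather*}
There is no real obstacle here; the only point requiring minor care is verifying the sign bookkeeping when expanding $\langle g,g\rangle_{F}$, and the legitimacy of using Cauchy--Schwarz for a merely positive‑semidefinite form (which is standard: the usual discriminant argument $\langle g+tA^{\ast}h,\, g+tA^{\ast}h\rangle_{F}\geq 0$ with $A$ replaced by a suitable scalar goes through without needing the non‑degenerate case).
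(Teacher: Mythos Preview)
Your argument is correct and is essentially the same as the paper's, just repackaged: the paper tests positive-definiteness on the single three-point function $f(\mathbf{0})=u$, $f(\alpha)=tv$, $f(\beta)=-tv$ and applies the discriminant argument directly to the resulting quadratic in $t$, whereas you split this $f$ as $g+th$ and invoke Cauchy--Schwarz for $\langle\cdot,\cdot\rangle_F$, which is of course the same discriminant argument in abstract form. One small point you left implicit: your choice $u=A^{\ast}v/\|A\|$ presumes $\|A\|\neq 0$, so you should note that the case $F(\alpha)=F(\beta)$ is trivial.
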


\begin{proof}
Assume $\alpha\neq\beta$ and let $f(\mathbf{0}) =u$, $f(
\alpha) =w$, $f(\beta) =-w$, and $f(\gamma)=0$ otherwise. By def\/inition
\begin{gather*}
u^{\ast}F(\mathbf{0}) u+2w^{\ast}F(\mathbf{0})
w+w^{\ast}( F(\alpha) -F(\beta) )u+u^{\ast}( F(-\alpha) -F(-\beta))w\\
\qquad{} -w^{\ast}F(\alpha-\beta) w-w^{\ast}F(\beta-\alpha) w\geq0.
\end{gather*}
Let $u,v\in\mathbb{C}^{n}$ satisfy $ \vert u \vert =1= \vert
v \vert $ and $ \vert ( F(\alpha) -F (\beta ) ) ^{\ast}v \vert =v^{\ast} ( F (\alpha ) -F(\beta) ) u = \Vert
F(\alpha) -F(\beta) \Vert $. Set $w=tv$ with $t\in\mathbb{R}$. The inequality becomes
\begin{gather*}
u^{\ast}F( \mathbf{0}) u+2t^{2}v^{\ast}F( \mathbf{0}t) v+2t\Vert F(\alpha) -F(\beta)
\Vert -2t^{2}\operatorname{Re}\big( v^{\ast}F ( \alpha
-\beta ) v\big) \geq0.
\end{gather*}
The discriminant of the quadratic polynomial in $t$ must be nonpositive and
this implies the stated inequality, since $u^{\ast}F( \mathbf{0})
u\leq\Vert F( \mathbf{0})\Vert $.
\end{proof}

This is a sort of uniform continuity.

For any f\/ixed $u\in\mathbb{C}^{n}$ the scalar function $g_{u}(\alpha) =u^{\ast}F(\alpha) u$ is positive-def\/inite in the
classical sense and thus by Bochner's theorem (see \cite[pp.~17--21]{Rudin1962}) there exists a unique positive Baire measure~$\mu_{u}$ on~$\mathbb{T}^{N}$ such that
\begin{gather*}
\int_{\mathbb{T}^{N}}x^{-\alpha}\mathrm{d}\mu_{u}(x) =u^{\ast}F(\alpha) u, \qquad \alpha\in\mathbb{Z}^{N}.
\end{gather*}
In particular, for $1\leq j\leq n$ let $u=\varepsilon_{j}$ then $u^{\ast
}F(\alpha) u=F(\alpha) _{jj}$ and set $\mu
_{jj}=\mu_{\varepsilon_{j}}$. For $1\leq j,k\leq n$ (with $j\neq k$) let
$u=\varepsilon_{j}+\varepsilon_{k}$, $v=\varepsilon_{j}+\mathrm{i}\varepsilon_{k}$
\begin{gather*}
\int_{\mathbb{T}^{N}}x^{-\alpha}\mathrm{d}\mu_{u}(x) =F(\alpha) _{jj}+F(\alpha) _{jk}+F\left(
\alpha\right) _{kj}+F(\alpha) _{kk},\\
\int_{\mathbb{T}^{N}}x^{-\alpha}\mathrm{d}\mu_{v}(x)=F(\alpha) _{jj}+\mathrm{i}F(\alpha)
_{jk}-\mathrm{i}F(\alpha) _{kj}+F(\alpha) _{kk}.
\end{gather*}
Thus
\begin{gather*}
\int_{\mathbb{T}^{N}}x^{-\alpha}\mathrm{d} ( \mu_{u}-\mathrm{i}\mu
_{v} ) =2F(\alpha) _{jk}+ ( 1-\mathrm{i} )
\int_{\mathbb{T}^{N}}x^{-\alpha}\mathrm{d} ( \mu_{jj}+\mu_{kk} ) ,
\end{gather*}
and def\/ine
\begin{gather*}
\mu_{jk}=\frac{1}{2} ( \mu_{u}-\mathrm{i}\mu_{v} ) -\frac{1-\mathrm{i}}{2} ( \mu_{jj}+\mu_{kk} ) ,
\end{gather*}
with the result
\begin{gather*}
\int_{\mathbb{T}^{N}}x^{-\alpha}\mathrm{d}\mu_{jk}(x) =F (\alpha) _{jk},\alpha\in\mathbb{Z}^{N}.
\end{gather*}
From the above equations we also obtain%
\begin{gather*}
\int_{\mathbb{T}^{N}}x^{-\alpha}\mathrm{d} ( \mu_{u}+\mathrm{i}\mu
_{v} ) =2F(\alpha) _{kj}+ ( 1+\mathrm{i} )
\int_{\mathbb{T}^{N}}x^{-\alpha}\mathrm{d} ( \mu_{jj}+\mu_{kk}) .
\end{gather*}
The formula
\begin{gather*}
\mu_{kj}=\frac{1}{2} ( \mu_{u}+\mathrm{i}\mu_{v} ) -\frac
{1+\mathrm{i}}{2} ( \mu_{jj}+\mu_{kk} ) ,
\end{gather*}
is consistent with the previous one and it can be directly verif\/ied that
$\widehat{\mu_{kj}}(-\alpha) =\overline{\widehat{\mu_{jk}
}(\alpha) }$ from the general equation $\widehat{\mu_{u}} (
-\alpha ) =\overline{\widehat{\mu_{u}}(\alpha) }$. This is
a restatement of $F(-\alpha) =F(\alpha) ^{\ast}$.

From $\Vert \mu_{u} \Vert =u^{\ast}F(\mathbf{0}) u$
(measure/total variation norm) we obtain $\Vert \mu_{jj}\Vert =F(\mathbf{0}) _{jj}$ and
\begin{gather*}
\Vert \mu_{jk}\Vert \leq\frac{1}{\sqrt{2}}( F(\mathbf{0}) _{jj}+F(\mathbf{0}) _{kk}) +F(
\mathbf{0}) _{jj}+F(\mathbf{0}) _{kk}+\vert\operatorname{Re}F(\mathbf{0}) _{jk}\vert +\vert
\operatorname{Im}F(\mathbf{0}) _{jk}\vert .
\end{gather*}
In particular, if $F(\mathbf{0}) =I$ then $\Vert \mu
_{jk}\Vert \leq2+\sqrt{2}$. For the space $C\big( \mathbb{T}^{N};\mathbb{C}^{n}\big) $ (continuous functions on $\mathbb{T}^{N}$,
values in $\mathbb{C}^{n}$) def\/ine an inner product
\begin{gather*}
\langle f,g\rangle _{F}:=\sum_{i,j=1}^{n}\int_{\mathbb{T}^{N}}\overline{f(x) _{i}}g(x) _{j}\mathrm{d}\mu_{ij}(x) ,
\end{gather*}
then for $\alpha,\beta\in\mathbb{Z}^{N}$, and $1\leq i,j\leq n$ (with the
standard unit basis vectors $\varepsilon_{i}$ of $\mathbb{C}^{n}$)
\begin{gather*}
\big\langle x^{\alpha}\varepsilon_{i},x^{\beta}\varepsilon_{j}\big\rangle
_{F}=\int_{\mathbb{T}^{N}}x^{-\alpha}x^{\beta}\mathrm{d}\mu_{ij}(x) =F(\alpha-\beta) _{ij}.
\end{gather*}
The following summarizes the above results.

\begin{Theorem}\label{Bformthm}The Hermitian form $\langle \cdot,\cdot\rangle
_{F}$ is bounded on $C\big( \mathbb{T}^{N};\mathbb{C}^{n}\big) $, that is
$\vert \langle f,g\rangle _{F}\vert \leq B\Vert
f\Vert _{\infty}\Vert g\Vert _{\infty}$ for some $B<\infty$,
is positive-semidefinite, and if $f$, $g$ are finitely supported functions on
$\mathbb{Z}^{N}$ with values in $\mathbb{C}^{n}$ then for $\widehat{f}(x) :=\sum_{\alpha}f(\alpha) x^{\alpha}$ and $\widehat
{g}(x) :=\sum_{\alpha}g(\alpha) x^{\alpha}$
\begin{gather*}
\big\langle \widehat{f},\widehat{g}\big\rangle _{F}=\sum_{\alpha,\beta
}f(\alpha) ^{\ast}F(\alpha-\beta) g(\beta) .
\end{gather*}
\end{Theorem}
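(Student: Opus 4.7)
The plan is to verify the three claims in order, using the Baire measures $\{\mu_{ij}\}$ constructed immediately above and the assumed positive-definiteness of $F$.

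First I would verify the Fourier-side identity on trigonometric polynomials by direct computation. Since $\widehat{f}$ and $\widehat{g}$ are finite sums, interchanging sum and integral and using $\overline{x^{\alpha}}=x^{-\alpha}$ on $\mathbb{T}^{N}$ gives
\begin{gather*}
\langle\widehat{f},\widehat{g}\rangle_{F}
=\sum_{i,j=1}^{n}\sum_{\alpha,\beta}\overline{f(\alpha)_{i}}\,g(\beta)_{j}\int_{\mathbb{T}^{N}}x^{\beta-\alpha}\,\mathrm{d}\mu_{ij}(x)
=\sum_{\alpha,\beta}f(\alpha)^{\ast}F(\alpha-\beta)g(\beta),
\end{gather*}
where the final step uses the defining relation $\int_{\mathbb{T}^{N}}x^{-\gamma}\,\mathrm{d}\mu_{ij}=F(\gamma)_{ij}$ with $\gamma=\alpha-\beta$, and reassembles the scalar sum over $(i,j)$ into the matrix product.

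Second, for boundedness I would invoke the total variation estimates on the $\mu_{ij}$ already obtained above: $\Vert\mu_{jj}\Vert=F(\mathbf{0})_{jj}$, and the polarization estimate that controls $\Vert\mu_{jk}\Vert$ by the entries of $F(\mathbf{0})$. Setting $B:=\sum_{i,j=1}^{n}\Vert\mu_{ij}\Vert<\infty$, the standard bound $\vert\int_{\mathbb{T}^{N}}h\,\mathrm{d}\nu\vert\leq\Vert h\Vert_{\infty}\Vert\nu\Vert$ for continuous $h$ and complex Baire $\nu$ yields
\begin{gather*}
\vert\langle f,g\rangle_{F}\vert\leq\sum_{i,j=1}^{n}\Vert\mu_{ij}\Vert\,\Vert f\Vert_{\infty}\Vert g\Vert_{\infty}=B\,\Vert f\Vert_{\infty}\Vert g\Vert_{\infty}
\end{gather*}
for all $f,g\in C(\mathbb{T}^{N};\mathbb{C}^{n})$.

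Third, positivity follows by combining the identity with density. Applying the identity with $f=g$ gives $\langle\widehat{f},\widehat{f}\rangle_{F}=\sum_{\alpha,\beta}f(\alpha)^{\ast}F(\alpha-\beta)f(\beta)\geq 0$ directly from the positive-definiteness hypothesis on $F$; this settles positivity on the subspace of $\mathbb{C}^{n}$-valued trigonometric polynomials. For arbitrary $p\in C(\mathbb{T}^{N};\mathbb{C}^{n})$, Stone--Weierstrass (applied componentwise, or a Fej\'er-type approximation) produces trigonometric polynomials $q_{k}\to p$ uniformly, and the boundedness step gives $\langle p,p\rangle_{F}=\lim_{k\to\infty}\langle q_{k},q_{k}\rangle_{F}\geq 0$.

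The main delicate point, rather than a genuine obstacle, is the handling of the complex measures $\mu_{jk}$ for $j\neq k$: these are not positive, only of bounded variation. But that is already enough for the uniform $\Vert\cdot\Vert_{\infty}$-bound, which in turn drives the density extension of positivity from trigonometric polynomials to all of $C(\mathbb{T}^{N};\mathbb{C}^{n})$.
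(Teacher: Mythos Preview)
Your proposal is correct and follows essentially the same approach as the paper: the paper also derives the bound from the total-variation estimates on the $\mu_{jk}$ (depending only on $F(\mathbf{0})$), computes the Fourier-side identity directly on trigonometric polynomials via $\int x^{-\gamma}\,\mathrm{d}\mu_{ij}=F(\gamma)_{ij}$, and then extends positivity from trigonometric polynomials to all of $C(\mathbb{T}^{N};\mathbb{C}^{n})$ by density. The only difference is the order of presentation, which is immaterial.
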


\begin{proof}
The bound follows from the uniform bound on $\Vert \mu_{jk}\Vert $
for all $j$, $k$, depending only on~$F(\mathbf{0}) $. Suppose
$\widehat{f}$, $\widehat{g}$ are trigonometric (Laurent) polynomials (equivalent
to f\/inite support on~$\mathbb{Z}^{N}$), then
\begin{gather*}
\big\langle \widehat{f},\widehat{g}\big\rangle _{F} =\sum_{\alpha
,\beta}\sum_{i,j=1}^{n}\int_{\mathbb{T}^{N}}\overline{f(\alpha)
_{i}}x^{-\alpha}g(\beta) _{j}x^{\beta}\mathrm{d}\mu_{ij} (x) \\
\hphantom{\big\langle \widehat{f},\widehat{g}\big\rangle _{F}}{}
 =\sum_{\alpha,\beta}\sum_{i,j=1}^{n}\overline{f(\alpha) _{i}}F(\alpha-\beta) _{ij}g(\beta) _{j}=\sum
_{\alpha,\beta}f(\alpha) ^{\ast}F(\alpha-\beta)g(\beta) .
\end{gather*}
By def\/inition $\big\langle \widehat{f},\widehat{f}\big\rangle _{F}\geq0$
and by the density of the trigonometric polynomials in $C\big(
\mathbb{T}^{N};\mathbb{C}^{n}\big) $ it follows that $\langle
\cdot,\cdot\rangle _{F}$ is a positive semidef\/inite (it is possible that
$\big\langle \widehat{f},\widehat{f}\big\rangle _{F}=0$ for some $f\neq0$)
Hermitian form.
\end{proof}

The next result is used for the approximate identity arguments.

\begin{Theorem}\label{posdefpt} Suppose $\sigma\in C\big( \mathbb{T}^{N}\big) $ and
$\sigma(x) \geq0$ for all $x\in\mathbb{T}^{N}$ then each
$\sigma\ast\mu_{ij}\in C\big( \mathbb{T}^{N}\big) $ and $[\sigma\ast\mu_{ij}(x)] _{i,j=1}^{n}\in M_{n}(
\mathbb{C}) $ is positive semidefinite for each $x\in\mathbb{T}^{N}$.
\end{Theorem}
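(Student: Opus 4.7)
The plan is to reduce the positivity of the matrix $[\sigma\ast\mu_{ij}(x)]$ to the positivity of a single scalar convolution, by pairing the matrix entries with an arbitrary vector $v\in\mathbb{C}^{n}$.

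First I would dispose of the continuity claim. Since $\mathbb{T}^{N}$ is compact and $\sigma$ is continuous, $\sigma$ is uniformly continuous, and each $\mu_{ij}$ is a finite Baire measure (by the bounds on $\Vert\mu_{ij}\Vert$ established in the construction). A standard estimate then gives
\begin{gather*}
\vert(\sigma\ast\mu_{ij})(x)-(\sigma\ast\mu_{ij})(x')\vert
\leq\Vert\mu_{ij}\Vert\sup_{y\in\mathbb{T}^{N}}\vert\sigma(xy^{-1})-\sigma(x'y^{-1})\vert,
\end{gather*}
so $\sigma\ast\mu_{ij}\in C(\mathbb{T}^{N})$.

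For the positive-semidefiniteness, the key step is to identify, for each fixed $v\in\mathbb{C}^{n}$, the scalar Baire measure
\begin{gather*}
\nu_{v}:=\sum_{i,j=1}^{n}\overline{v_{i}}v_{j}\mu_{ij}
\end{gather*}
with the positive measure $\mu_{v}$ already produced by Bochner's theorem applied to the scalar positive-definite function $\alpha\mapsto v^{\ast}F(\alpha)v$ (in our setting $F(\alpha)=A_{\alpha}$, the hypothetical Fourier--Stieltjes coefficients). Both measures have the same Fourier--Stieltjes coefficients, since
\begin{gather*}
\int_{\mathbb{T}^{N}}x^{-\alpha}\mathrm{d}\nu_{v}(x)=\sum_{i,j}\overline{v_{i}}F(\alpha)_{ij}v_{j}=v^{\ast}F(\alpha)v=\int_{\mathbb{T}^{N}}x^{-\alpha}\mathrm{d}\mu_{v}(x),
\end{gather*}
and the uniqueness clause in Bochner's theorem forces $\nu_{v}=\mu_{v}\geq 0$.

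With this identification in hand, for any $x\in\mathbb{T}^{N}$,
\begin{gather*}
\sum_{i,j=1}^{n}\overline{v_{i}}v_{j}(\sigma\ast\mu_{ij})(x)
=\int_{\mathbb{T}^{N}}\sigma(xy^{-1})\,\mathrm{d}\nu_{v}(y)
=(\sigma\ast\mu_{v})(x)\geq 0,
\end{gather*}
because the integrand is a product of a nonnegative continuous function and a positive measure. Since $v$ was arbitrary, the matrix $[\sigma\ast\mu_{ij}(x)]_{i,j=1}^{n}$ is positive semidefinite.

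I do not anticipate a serious obstacle: the only subtlety is the identification $\nu_{v}=\mu_{v}$, which is immediate from the uniqueness in the scalar Bochner theorem. One small bookkeeping point is that the off-diagonal $\mu_{ij}$ were defined by a linear combination of four positive measures (corresponding to choices $u=\varepsilon_{j}+\varepsilon_{k}$ and $v=\varepsilon_{j}+\mathrm{i}\varepsilon_{k}$), so $\nu_{v}$ itself is \emph{a priori} only a complex measure; the Fourier-coefficient argument is precisely what lets us conclude it is in fact positive.
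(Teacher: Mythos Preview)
Your proof is correct, but it takes a different route from the paper's. The paper first establishes the \emph{integrated} inequality
\[
\sum_{i,j}\int_{\mathbb{T}^{N}}\overline{f_{i}(x)}\,(\sigma\ast\mu_{ij})(x)\,f_{j}(x)\,\mathrm{d}m(x)\geq 0
\]
for all $f\in C(\mathbb{T}^{N};\mathbb{C}^{n})$, by a change of variable $x=yu$ that reduces it to $\int\sigma(u)\,\langle f(\cdot\,u),f(\cdot\,u)\rangle_{F}\,\mathrm{d}m(u)\geq 0$, invoking the already-proved positivity of the form $\langle\cdot,\cdot\rangle_{F}$. It then localizes to a point $z$ by taking $f(x)=g(x)v$ with $g\geq 0$ a bump supported in an $\varepsilon$-neighborhood of $z$ and $\int g^{2}\,\mathrm{d}m=1$, and lets $\varepsilon\to 0$.

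Your argument is more direct: you go straight to the pointwise statement by identifying the complex measure $\nu_{v}=\sum_{i,j}\overline{v_{i}}v_{j}\mu_{ij}$ with the positive Bochner measure $\mu_{v}$ via injectivity of the Fourier--Stieltjes transform, which immediately yields $(\sigma\ast\nu_{v})(x)\geq 0$. This bypasses the bump-function approximation entirely. The paper's route, on the other hand, avoids an explicit appeal to Fourier uniqueness, relying instead on the form positivity from Theorem~\ref{Bformthm}; but since Bochner's theorem already contains that uniqueness, nothing is really saved. Both proofs are short; yours is arguably the cleaner of the two.
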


\begin{proof}
Suppose $( f_{i}) _{i=1}^{n}\in C\big( \mathbb{T}^{N};\mathbb{C}^{n}\big)$. A convolution formula (similar to that in Lemma~\ref{convghnu}) shows that
\begin{gather*}
 \sum_{i,j=1}^{n}\int_{\mathbb{T}^{N}}\overline{f(x) _{i}}\sigma\ast\mu_{ij}(x) f(x) _{j}\mathrm{d}m(x)
 =\sum_{i,j=1}^{n}\int_{\mathbb{T}^{N}}\int_{\mathbb{T}^{N}}\sigma\big(
xy^{-1}\big) \overline{f(x) _{i}}f(x) _{j}\mathrm{d}\mu_{ij}(y) \mathrm{d}m(x) \\
\qquad{} =\int_{\mathbb{T}^{N}}\sigma(u) \int_{\mathbb{T}^{N}}%
\sum_{i,j=1}^{n}\overline{f(yu) _{i}}f(yu)
_{j}\mathrm{d}\mu_{ij}(y) \mathrm{d}m(u) \geq0,
\end{gather*}
(change-of-variable $x=yu$) because $y\mapsto f(yu) \in C\big(
\mathbb{T}^{N};\mathbb{C}^{n}\big) $ and the double sum is continuous in
$u$ and nonnegative by the above theorem. Now let $f(x)
=g(x) v$ where $v\in\mathbb{C}^{n}$ and $g\in C\big(
\mathbb{T}^{N}\big) $, $g\geq0$, $\int_{\mathbb{T}^{N}}g^{2}\mathrm{d}m=1$
and $g=0$ of\/f an $\varepsilon$-neighborhood of a f\/ixed $z\in\mathbb{T}^{N}$
(i.e., $\vert x-z\vert \geq\varepsilon$ implies $g(x)=0$) then
\begin{gather*}
 \sum_{i,j=1}^{n}\int_{\mathbb{T}^{N}}\overline{f(x) _{i}}\sigma\ast\mu_{ij}(x) f(x) _{j}\mathrm{d}m(x)
 =\int_{\mathbb{T}^{N}}g(x) ^{2}\sum_{i,j=1}^{n}\overline{v_{i}}( \sigma\ast\mu_{ij}(x)) v_{j} \mathrm{d}m(x) \geq0.
\end{gather*}
Let $\varepsilon\rightarrow0$ then the integral tends to $\sum\limits_{i,j=1}^{n}\overline{v_{i}}
( \sigma\ast\mu_{ij} ( z ) )v_{j}$, and this completes the proof.
\end{proof}

\subsection{Some results for the index set}

For $n=1,2,3,\ldots$
\begin{gather}
\#\boldsymbol{Z}_{N,n}=\sum_{j=1}^{N-1}\binom{N}{j}\binom{n-1}{j-1}%
\binom{N-j+n-1}{n}; \label{ZNnct}
\end{gather}
in each $j$-subset (there are $\binom{N}{j}$) of $[ 1,2,\ldots,N]
$ take $j$-tuples $\alpha_{i_{1}},\ldots,\alpha_{i_{j}}$ with $\sum\limits_{\ell
=1}^{j}\alpha_{i_{\ell}}=n$ and each $\alpha_{i_{\ell}}\geq1$ ($\binom
{n-1}{j-1}$ possibilities) and in the complement take $(N-j)
$-tuples with $\sum\limits_{\ell=1}^{N-j}\alpha_{i_{\ell}}=-n$ and each
$\alpha_{i_{\ell}}\leq0$ ($\binom{N-j+n-1}{n}$ possibilities). For example
when $n\geq1$
\begin{gather*}
\#\boldsymbol{Z}_{2,n}=2, \quad \#\boldsymbol{Z}_{3,n}=6n, \quad \#\boldsymbol{Z}
_{4,n}=10n^{2}+2, \quad \#\boldsymbol{Z}_{5,n}=\frac{5n}{3}\big( 7n^{2}+5\big), \quad \ldots.
\end{gather*}

\begin{Proposition}\label{mingam}Among $\gamma\in\boldsymbol{Z}_{N,n}$ such that $\gamma^{\nu
}=\beta$ for some fixed $\beta$ with $ \vert \beta \vert =n$ and such
that $\beta_{j}>0$ exactly when $j>k$ the minimal multi-index for the order
$\gamma^{(1) \pi}\vartriangleright\gamma^{(2) \pi
}$ is $\gamma^{(0)}= \big( p+1,\ldots,\overset{(m) }{p+1},p,\ldots,\overset{(k) }{p},-\beta_{k+1},\ldots,-\beta_{N}\big) $ where $p=\big\lfloor \frac{n}{k}\big\rfloor $ and $m=n-kp$ $($so $0\leq m<k)$.
\end{Proposition}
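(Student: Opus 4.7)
My plan is to reduce Proposition~\ref{mingam0} to a classical fact about dominance of partitions: that the ``most balanced'' partition is the dominance-minimum among partitions of $n$ with at most $k$ parts. First, I would unpack the constraints on $\gamma$. Since $\beta_j > 0$ exactly when $j > k$, the identity $\gamma_j = \gamma_j^\pi - \gamma_j^\nu$ together with the fact that $\gamma_j^\pi$ and $\gamma_j^\nu$ cannot simultaneously be positive forces $\gamma_j^\pi = 0$ and $\gamma_j = -\beta_j$ for $j > k$, while for $j \leq k$ we have $\gamma_j = \gamma_j^\pi \geq 0$. Thus $\gamma$ is completely determined by the composition $(\gamma_1^\pi, \ldots, \gamma_k^\pi)$ of $n$ supported in the first $k$ coordinates. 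In the setting of the preceding theorem, where $\gamma^\pi$ is a partition, this composition is a partition of $n$ with at most $k$ parts; since $(\gamma^\pi)^+ = \gamma^\pi$ in that case, the order $\vartriangleleft$ on $\gamma^\pi$'s reduces to the dominance order $\prec$.

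The remaining combinatorial claim is that the dominance-minimum among partitions of $n$ with at most $k$ parts is $\mu := \bigl((p+1)^m, p^{k-m}\bigr)$, with $p = \lfloor n/k \rfloor$ and $m = n - kp$. I would prove this by induction on $i$, showing $\sum_{j=1}^i \mu_j \leq \sum_{j=1}^i \lambda_j$ for every partition $\lambda \neq \mu$ of $n$ with at most $k$ parts. If $\lambda_i \geq \mu_i$ the step is immediate from the inductive hypothesis. Otherwise $\lambda_i < \mu_i$, and the partition property $\lambda_j \leq \lambda_i$ for $j \geq i$ yields $\lambda_j \leq \mu_i - 1$ for all $j \geq i$, giving the tail bound $\sum_{j=i+1}^k \lambda_j \leq (k-i)(\mu_i - 1)$. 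Substituting this into $\sum_{j=1}^i \lambda_j = n - \sum_{j=i+1}^k \lambda_j$, together with $\lambda_i \geq 0$, the identity $n = kp + m$, and the explicit values $\mu_i = p+1$ (for $i \leq m$) or $\mu_i = p$ (for $i > m$), produces the required inequality after short arithmetic.

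Once $\mu$ is established as the unique dominance-minimum, $\gamma^\pi = \mu$ (extended by zeros for $j > k$) is the unique $\vartriangleleft$-minimizer compatible with the given $\gamma^\nu = \beta$, so the minimal multi-index is
\[
\gamma^{(0)} = \mu - \beta = \bigl(p+1, \ldots, p+1, p, \ldots, p, -\beta_{k+1}, \ldots, -\beta_N\bigr),
\]
as claimed. The main technical point is the tail-bound computation inside the induction; it is elementary, but requires a clean case split between $i \leq m$ (where $\mu_i = p+1$ and the tail is bounded by $(k-i)p$) and $i > m$ (where $\mu_i = p$ and the tail is bounded by $(k-i)(p-1)$), together with careful tracking of the identity $n = kp + m$.
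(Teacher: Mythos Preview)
Your argument is correct. Both you and the paper reduce to showing that the balanced partition $\mu = \bigl((p+1)^m, p^{k-m}\bigr)$ is dominance-minimal among partitions of $n$ with at most $k$ parts, but the inductions differ. The paper inducts on the number of parts: given $\alpha$ with $\leq k+1$ parts, it applies the hypothesis to $(\alpha_1,\ldots,\alpha_k)$ (a partition of some $n_1 \leq n$ into $\leq k$ parts), then compares the balanced partition $\gamma^{(1)}$ of $n_1$ into $k$ parts with the balanced partition $\gamma^{(2)}$ of $n$ into $k+1$ parts via a case analysis on whether the floors $p_1$, $p_2$ coincide. You instead fix $k$ and induct on the partial-sum index $i$, handling the non-trivial case $\lambda_i < \mu_i$ by the tail bound $\sum_{j>i}\lambda_j \leq (k-i)(\mu_i - 1)$ combined with $n = kp + m$. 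Your route is shorter and avoids the six auxiliary quantities $n_1, n_2, p_1, p_2, m_1, m_2$ that the paper tracks; the paper's route has the mild conceptual advantage of relating balanced partitions of different sizes, but for the purpose at hand neither approach offers anything the other lacks.
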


\begin{proof}
The claim is that $\big( \gamma_{i}^{(0) }\big) _{i=1}^{k}$
is $\prec$-minimal among partitions~$\alpha$ of length $\leq k$ and
$\vert\alpha\vert =n$. Argue by induction on the length. The
statement is obviously true when $k=1$. Suppose it is true for~$k$ and let
$\alpha$ be a partition of length $\leq k+1$. Let
\begin{gather*}
n_{1} =\sum_{i=1}^{k}\alpha_{i}, \qquad n_{2}=n_{1}+\alpha_{k+1}, \qquad p_{1}=\left\lfloor \frac{n_{1}}{k}\right\rfloor ,
\qquad p_{2}=\left\lfloor \frac{n_{2}}{k+1}\right\rfloor ,\\
m_{1} =n_{1}-kp_{1}, \qquad m_{2}=n_{2}-(k+1) p_{2}.
\end{gather*}
Def\/ine $\gamma^{(1) }$ and $\gamma^{(2) }$
analogously to the above ($\gamma_{i}^{(s) }=p_{s}+1$ for $1\leq
i\leq m_{s}$ and $\gamma_{i}^{(1) }=p_{1}$ for $m_{1}<i\leq k$,
$\gamma^{(2) }=p_{2}$ for $m_{2}<i\leq k+1$). By the inductive
hypothesis $\sum\limits_{j=1}^{i}\alpha_{j}\geq\sum\limits_{j=1}^{i}\gamma_{j}^{(1)}$. This implies $\alpha_{k+1}\leq\alpha_{k}\leq\gamma_{k}^{(1) }$. Thus $(k+1) p_{2}\leq n_{1}+\alpha_{k+1}\leq m_{1}+kp_{1}+\alpha_{k+1}\leq m_{1}+(k+1) p_{1}$ and $p_{2}
\leq\frac{m_{1}}{k+1}+p_{1}$. Since $p_{1}$, $p_{2}$ are integers this implies
$p_{2}\leq p_{1}$. If $p_{1}=p_{2}$ then $m_{2}=m_{1}-( p_{1}-\alpha_{k+1}) $, and clearly $\sum\limits_{j=1}^{i}\gamma_{j}^{(1) }\geq\sum\limits_{j=1}^{i}\gamma_{j}^{(2) }$ for $1\leq i\leq k$. If $p_{2}<p_{1}$ then $\gamma_{j}^{(2) }\leq p_{2}+1\leq
p_{1}\leq\gamma_{j}^{(1) }$ for $1\leq j\leq k$. Thus $\alpha\succeq\gamma^{(2) }$.
\end{proof}

\pdfbookmark[1]{References}{ref}
\LastPageEnding


\begin{thebibliography}{99}
\footnotesize\itemsep=0pt

\bibitem{Beerends/Opdam1993}
Beerends R.J., Opdam E.M., Certain hypergeometric series related to the root
 system~{$BC$}, \href{http://dx.doi.org/10.2307/2154288}{\textit{Trans. Amer. Math. Soc.}} \textbf{339} (1993),
 581--609.

\bibitem{Dunkl1993}
Dunkl C.F., Dif\/ferential-dif\/ference operators and monodromy representations of
 {H}ecke algebras, \href{http://dx.doi.org/10.2140/pjm.1993.159.271}{\textit{Pacific~J. Math.}} \textbf{159} (1993), 271--298.

\bibitem{Dunkl2010}
Dunkl C.F., Symmetric and antisymmetric vector-valued {J}ack polynomials,
 \textit{S\'em. Lothar. Combin.} \textbf{64} (2010), Art.~B64a, 31~pages,
 \href{http://arxiv.org/abs/1001.4485}{arXiv:1001.4485}.

\bibitem{Dunkl2014}
Dunkl C.F., Vector polynomials and a matrix weight associated to dihedral
 groups, \href{http://dx.doi.org/10.3842/SIGMA.2014.044}{\textit{SIGMA}} \textbf{10} (2014), 044, 23~pages, \href{http://arxiv.org/abs/1306.6599}{arXiv:1306.6599}.

\bibitem{Dunkl/Luque2011}
Dunkl C.F., Luque J.G., Vector-valued {J}ack polynomials from scratch,
 \href{http://dx.doi.org/10.3842/SIGMA.2011.026}{\textit{SIGMA}} \textbf{7} (2011), 026, 48~pages, \href{http://arxiv.org/abs/1009.2366}{arXiv:1009.2366}.

\bibitem{Dunkl/Xu2014}
Dunkl C.F., Xu Y., Orthogonal polynomials of several variables, 2nd ed.,
 \href{http://dx.doi.org/10.1017/CBO9781107786134}{\textit{Encyclopedia of Mathematics and its Applications}}, Cambridge University Press,
 Cambridge, 2014.

\bibitem{Etingof/Stoica2009}
Etingof P., Stoica E., Unitary representations of rational {C}herednik
 algebras, \href{http://dx.doi.org/10.1090/S1088-4165-09-00356-2}{\textit{Represent. Theory}} \textbf{13} (2009), 349--370,
 \href{http://arxiv.org/abs/0901.4595}{arXiv:0901.4595}.

\bibitem{Griffeth2010}
Grif\/feth S., Orthogonal functions generalizing {J}ack polynomials,
 \href{http://dx.doi.org/10.1090/S0002-9947-2010-05156-6}{\textit{Trans. Amer. Math. Soc.}} \textbf{362} (2010), 6131--6157,
 \href{http://arxiv.org/abs/0707.0251}{arXiv:0707.0251}.

\bibitem{James/Kerber1981}
James G., Kerber A., The representation theory of the symmetric group,
 \textit{Encyclopedia of Mathematics and its Applications}, Vol.~16,
 Addison-Wesley Publishing Co., Reading, Mass., 1981.

\bibitem{Lapointe/Vinet1996}
Lapointe L., Vinet L., Exact operator solution of the {C}alogero--{S}utherland
 model, \href{http://dx.doi.org/10.1007/BF02099456}{\textit{Comm. Math. Phys.}} \textbf{178} (1996), 425--452.

\bibitem{Murphy1981}
Murphy G.E., A new construction of {Y}oung's seminormal representation of the
 symmetric groups, \href{http://dx.doi.org/10.1016/0021-8693(81)90205-2}{\textit{J.~Algebra}} \textbf{69} (1981), 287--297.

\bibitem{Opdam1995}
Opdam E.M., Harmonic analysis for certain representations of graded {H}ecke
 algebras, \href{http://dx.doi.org/10.1007/BF02392487}{\textit{Acta Math.}} \textbf{175} (1995), 75--121.

\bibitem{Rudin1962}
Rudin W., Fourier analysis on groups, \textit{Interscience Tracts in Pure and
 Applied Mathematics}, Vol.~12, Interscience Publishers, New York~-- London,
 1962.

\bibitem{Okounkov/Vershik2005}
Vershik A.M., Okunkov A.Yu., A new approach to representation theory of
 symmetric groups.~{II}, \href{http://dx.doi.org/10.1007/s10958-005-0421-7}{\textit{J.~Math. Sci.}} \textbf{131} (2005),
 5471--5494, \href{http://arxiv.org/abs/math.RT/0503040}{math.RT/0503040}.

\end{thebibliography}
\end{document}